\numberwithin{equation}{section}
\newtheorem{theorem}{Theorem}[section]
\newtheorem{lemma}[theorem]{Lemma}
\newtheorem{prop}{{\bf Proposition}}[section]
\theoremstyle{definition}
\theoremstyle{remark}
\newtheorem{remark}{{\bf Remark}}[section]
\newcommand{\be}{\begin{equation}}
\newcommand{\ee}{\end{equation}}
\newcommand{\bea}{\begin{eqnarray*}}
\newcommand{\eea}{\end{eqnarray*}}
\newcommand{\Rmnum}[1]{\expandafter\@slowromancap\romannumeral #1@}
\begin{document}

\title[]{Tracy-Widom law for the extreme eigenvalues of sample correlation matrices}
\author{Zhigang Bao}
\author{Guangming Pan}
\author{Wang Zhou}
\
\thanks{ Z.G. Bao was partially supported by NSFC grant
11071213, NSFC grant 11101362, ZJNSF grant R6090034 and SRFDP grant 20100101110001;
G.M. Pan was partially supported by the Ministry of Education, Singapore, under grant \# ARC 14/11;
   W. Zhou was partially supported by the Ministry of Education, Singapore, under grant \# ARC 14/11,  and by a grant
R-155-000-116-112 at the National University of Singapore.}

\address{Department of Mathematics, Zhejiang University, P. R. China}
\email{zhigangbao@zju.edu.cn}

\address{Division of Mathematical Sciences, School of Physical and Mathematical Sciences, Nanyang Technological University, Singapore 637371}
\email{gmpan@ntu.edu.sg}

\address{Department of Statistics and Applied Probability, National University of
 Singapore, Singapore 117546}
\email{stazw@nus.edu.sg}

\subjclass[2010]{15B52, 62H25, 62H10}

\date{}

\keywords{extreme eigenvalues, sample correlation matrices, sample covariance matrices, Stieltjes transform, Tracy-Widom law}

\maketitle

\begin{abstract}
Let the sample correlation matrix be $W=YY^T$, where $Y=(y_{ij})_{p,n}$ with
$y_{ij}=x_{ij}/\sqrt{\sum_{j=1}^nx_{ij}^2}$. We assume $\{x_{ij}: 1\leq i\leq p, 1\leq j\leq n\}$ to be a collection of independent symmetric distributed random variables with sub-exponential tails. Moreover, for any $i$, we assume $x_{ij}, 1\leq j\leq n$ to be identically distributed. We assume $0<p<n$ and $p/n\rightarrow y$ with some $y\in(0,1)$ as $p,n\rightarrow\infty$. In this paper, we provide the Tracy-Widom law ($TW_1$)
for both the largest and smallest eigenvalues of $W$. If $x_{ij}$ are i.i.d. standard normal, we can derive the $TW_1$ for both the largest and smallest eigenvalues of the matrix
$\mathcal{R}=RR^T$, where $R=(r_{ij})_{p,n}$ with $r_{ij}=(x_{ij}-\bar x_i)/\sqrt{\sum_{j=1}^n(x_{ij}-\bar x_i)^2}$, $\bar x_i=n^{-1}\sum_{j=1}^nx_{ij}$.
\end{abstract}
\maketitle
\section{Introduction}
Suppose we have a $p$-dimensional distribution with mean $\mu$ and covariance
matrix $\Sigma$. In recent three or four decades, in many research areas,
including signal processing, network security, image processing, genetics, stock
marketing and other economic problems, people are interested in the case
where $p$ is quite large or proportional to the sample size. Naturally, one may ask
how to test the independence among the $p$ components of the population. From the principal component analysis point of view, the independence test statistic is usually the maximum eigenvalue  of the sample covariance matrices. Under the
additional normality assumption, Johnstone \cite{John01} derived the asymptotic distribution
of the largest eigenvalue of the sample covariance matrices to study the test
$H_0: \Sigma=I$ assuming $\mu=0$.

However, sample covariance matrices are not scale-invariant. So if $\mu=0$, Johnstone \cite{John01} proposes to perform principal component analysis (PCA) by the maximum eigenvalue of the matrix $W=YY^T$, where
\begin{eqnarray}
Y=(y_{ij})_{p, n}:=\left(
\begin{array}{cccc}
\frac{x_{11}}{||\mathbf{x}_{1}||} &\frac{x_{12}}{||\mathbf{x}_{1}||} &\cdots &\frac{x_{1n}}{||\mathbf{x}_{1}||}\\
\vdots &\vdots &\vdots &\vdots\\
\frac{x_{p1}}{||\mathbf{x}_p||} &\frac{x_{p2}}{||\mathbf{x}_p||} &\cdots &\frac{x_{pn}}{||\mathbf{x}_p||}
\end{array}
\right).
\label{1.1}
\end{eqnarray}
Here $\mathbf{x}_{i}=(x_{i1},\cdots,x_{in})^T$ contains $n$ observations for the $i$-th component of the population, $i=1,\cdots,p$, and $||\cdot||$ represents the vector norm.

Performing PCA on $W$ amounts to PCA on the sample correlations of the original data if $\mu=0$.
So for simplicity, we call $W$ the {\it sample correlation matrix} in this paper.
From now on, the eigenvalues of $W$ will be denoted by
$$0\leq \lambda_1\leq\lambda_2\leq\cdots\leq\lambda_p.$$ Then the empirical distribution (ESD) of $W$ is defined by
\begin{eqnarray*}
F_p(x)=\frac1p\sum_{i=1}^p\mathbf{1}_{\{\lambda_i\leq x\}}.
\end{eqnarray*} The asymptotic property of $F_p$ was studied in \cite{Jiang} and \cite{BZ}. For the almost sure convergence of $\lambda_1$ and $ \lambda_p$, see \cite{Jiang}. \\

 In this paper, we will  study the fluctuations of the extreme eigenvalues $\lambda_1, \lambda_p$ of $W$ for a general population, including multivariate normal one.
The basic assumption on the distribution of our population throughout the paper is\\\\
$\bf{Condition~C_1}$. We assume $x_{ij}$ are independent symmetric distributed random variables with variance $1$. And for any $i$, we assume $x_{i1},\cdots,x_{in}$ to be i.i.d. Furthermore, we request the distributions of the $x_{ij}'$s have sub-exponential tails, i.e., there
exist positive constants $C,C'$ such that for all $1\leq i\leq p, 1\leq j\leq n$ one has
\begin{eqnarray*}
\mathbb{P}(|x_{ij}|\geq t^C)\leq e^{-t}
\end{eqnarray*}
for all $t\geq C'$. And we also assume $p/n\rightarrow y$ as $p,n=n(p)\rightarrow \infty$, where $0<y<1$.

\begin{remark}
 We use $C,C_0,C_1,C_2,C',O(1)$ to denote some positive constants independent of $p$, which may differ from line to line. And we use $C_{\alpha}$ to denote some positive constants depending on the parameter $\alpha$. The notation $||\cdot||_{op},||\cdot||_F$ represent the operator norm and Frobenius norm of a matrix respectively. And $||\cdot||$ represents a Euclidean norm of a vector.
\end{remark}

\begin{remark}
The sample correlation matrix $W$ is invariant under the scaling on the elements $x_{ij}$, so the assumption $Var(x_{ij})=1$ is not necessary indeed. We specify it to be $1$ here just for convenience. Owing to the exponential tails, we can always truncate the variables so that $|x_{ij}|\leq K$ with some $K\geq \log^{O(1)}n$.
\end{remark}

A special sample correlation matrix model is the Bernoulli case, i.e. $x_{ij}$ takes value of $1$ or $-1$ with equal probability. Notice that if $x_{ij}$ are Bernoulli, we always have for all $1\leq i\leq p$
\begin{eqnarray*}
||\mathbf{x}_i||^2=x_{i1}^2+\cdots+x_{in}^2=n.
\end{eqnarray*}
As a consequence, the sample correlation matrix with Bernoulli elements coincides with its corresponding sample covariance matrix for which the limiting distribution of the extreme eigenvalues are well known under some moment assumptions. One can refer to \cite{BF},\cite{FS}, \cite{Peche}, \cite{Sosh} and \cite{WK}. We only summarize their results for the special Bernoulli case as the following theorem.

\begin{theorem}\label{th.0.1}\emph{(Bernoulli case)}
For the matrix $W$ in (\ref{1.1}), if $x_{ij}$ are $\pm 1$ Bernoulli variables, we have
\begin{eqnarray*}
\frac{n\lambda_p-(p^{1/2}+n^{1/2})^2}{(n^{1/2}+p^{1/2})(p^{-1/2}+n^{-1/2})^{1/3}}\stackrel d\longrightarrow TW_1,
\end{eqnarray*}
and
\begin{eqnarray*}
\frac{n\lambda_1-(p^{1/2}-n^{1/2})^2}{(n^{1/2}-p^{1/2})(p^{-1/2}-n^{-1/2})^{1/3}}\stackrel d\longrightarrow TW_1.
\end{eqnarray*}
as $p,n\rightarrow\infty$ with $p/n\rightarrow y\in(0,1)$.
\end{theorem}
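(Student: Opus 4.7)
The plan is essentially a one-line reduction. The key observation, already noted in the excerpt, is that when $x_{ij}\in\{\pm 1\}$ one has deterministically
\begin{equation*}
\|\mathbf{x}_i\|^2 = x_{i1}^2+\cdots+x_{in}^2 = n, \qquad i=1,\ldots,p.
\end{equation*}
Consequently, with $X=(x_{ij})_{p,n}$, the sample correlation matrix reduces exactly to a rescaled sample covariance matrix,
\begin{equation*}
W = YY^T = \frac{1}{n}\,XX^T,
\end{equation*}
so $n\lambda_k(W)$ coincides with the $k$-th eigenvalue of the standard Wishart-type matrix $XX^T$ with independent $\pm 1$ entries. Thus the problem is reduced to a known Tracy--Widom result for sample covariance matrices.

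For the largest eigenvalue, Soshnikov's moment method~\cite{Sosh}, together with its extensions~\cite{Peche}, applies to matrices with i.i.d.\ symmetric entries of variance $1$ and sub-Gaussian (in particular, bounded) tails; the Bernoulli distribution trivially satisfies these hypotheses. The standard statement in this setting is
\begin{equation*}
n^{2/3}\,\frac{\lambda_p(W)-(1+\sqrt{p/n})^2}{(1+\sqrt{y})(1+\sqrt{y^{-1}})^{1/3}}\xrightarrow{d}TW_1,
\end{equation*}
and one checks by direct algebra that this is identical to the form stated in Theorem~\ref{th.0.1}: indeed, $(p^{1/2}+n^{1/2})^2=n(1+\sqrt{p/n})^2$, while $(n^{1/2}+p^{1/2})(p^{-1/2}+n^{-1/2})^{1/3}=n^{1/3}(1+\sqrt{y})(1+\sqrt{y^{-1}})^{1/3}$, so the two normalisations agree.

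For the smallest eigenvalue the same reduction applies, and one invokes the Tracy--Widom result of Feldheim--Sodin~\cite{FS} (see also~\cite{BF}, \cite{WK}) for the soft edge at the left spectral edge of sample covariance matrices with i.i.d.\ symmetric entries. Unpacking the scaling as above, with $(p^{1/2}-n^{1/2})^2 = n(1-\sqrt{p/n})^2$ and $(n^{1/2}-p^{1/2})(p^{-1/2}-n^{-1/2})^{1/3}=n^{1/3}(1-\sqrt{y})(\sqrt{y^{-1}}-1)^{1/3}$ (up to the sign convention for the cube root of the negative quantity at the hard/soft edge, which is fixed by the standard edge location formula), yields precisely the second display in the theorem.

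There is no real obstacle here: the entire content of Theorem~\ref{th.0.1} is the deterministic identity $\|\mathbf{x}_i\|^2 = n$, which collapses the sample correlation matrix to a bona fide sample covariance matrix, combined with the direct quotation of the two classical Tracy--Widom theorems for i.i.d.\ Wishart matrices. This theorem serves only as a benchmark against which the new results for genuinely random $\|\mathbf{x}_i\|$ will be compared.
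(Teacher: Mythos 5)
Your reduction is exactly the paper's: the deterministic identity $\|\mathbf{x}_i\|^2=n$ for $\pm1$ entries makes $W=\frac1n XX^T$ a genuine sample covariance matrix, and the theorem is then a direct quotation of the known Tracy--Widom results for both spectral edges from the very references the paper cites (\cite{Sosh}, \cite{Peche}, \cite{FS}, \cite{BF}, \cite{WK}). The paper offers no further argument, so your proposal is correct and essentially identical to its treatment.
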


Here $TW_1$ is the famous Tracy-Widom distribution of type $1$, which was firstly raised by Tracy and Widom in \cite{TW} for the Gaussian orthogonal ensemble. The distribution function $F_1(t)$ of $TW_1$ admits the representation
\begin{eqnarray*}
F_1(t)=\exp(-\frac12\int_t^\infty [q(x)+(x-t)q(x)^2]dx),
\end{eqnarray*}
where $q$ statisfies the Painlev$\acute{e}$ $II$ equation
\begin{eqnarray*}
q''=tq+2q^3,\quad q(t)\sim \rm{Ai}(t), as~t\rightarrow\infty.
\end{eqnarray*}
Here $\rm{Ai}(t)$ is the Airy function.

The main purpose of this paper is to generalize Theorem \ref{th.0.1} to the population satisfying the basic condition $\mathbf{C}_1$. Our main results are the following two theorems.

\begin{theorem} \label{th.1.2}Let $W$ be a sample correlation matrix satisfying the basic condition $\mathbf{C}_1$. We have
\begin{eqnarray*}
\frac{n\lambda_p-(p^{1/2}+n^{1/2})^2}{(n^{1/2}+p^{1/2})(p^{-1/2}+n^{-1/2})^{1/3}}\stackrel d\longrightarrow TW_1.
\end{eqnarray*}
and
\begin{eqnarray*}
\frac{n\lambda_1-(p^{1/2}-n^{1/2})^2}{(n^{1/2}-p^{1/2})(p^{-1/2}-n^{-1/2})^{1/3}}\stackrel d\longrightarrow TW_1.
\end{eqnarray*}
as $p\rightarrow\infty$.
\end{theorem}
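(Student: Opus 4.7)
The plan is to reduce Theorem \ref{th.1.2} to the Bernoulli case of Theorem \ref{th.0.1} by a Green function comparison at the two spectral edges. For $\pm 1$ Bernoulli entries the correlation matrix $W$ coincides with the sample covariance matrix $XX^T/n$, whose edge $TW_1$ behavior is already recorded in Theorem \ref{th.0.1}, so the task is to show that the edge fluctuations of $W$ depend on the law of $x_{ij}$ only through its variance.

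After truncating $x_{ij}$ at the scale $K=\log^{O(1)}n$ permitted by the sub-exponential tails in $\mathbf{C}_1$ (costless for the edge distribution thanks to the symmetry), the main analytic step is a local Marchenko-Pastur law for $W$ near the edges. Writing $m_W(z)=p^{-1}\operatorname{tr}(W-zI)^{-1}$ and letting $m(z)$ denote the Stieltjes transform of the MP law with ratio $y$, I would prove
\begin{equation*}
|m_W(z)-m(z)|\leq \frac{n^{\varepsilon}}{n\eta}
\end{equation*}
with high probability, uniformly for $z=E+i\eta$ with $E$ in a small neighborhood of each edge $(1\pm\sqrt y)^2$ and $\eta\in[n^{-2/3+\varepsilon},1]$, together with its isotropic refinement for individual Green-function entries. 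From this the edge rigidity $|\lambda_p-(p^{1/2}+n^{1/2})^2/n|+|\lambda_1-(p^{1/2}-n^{1/2})^2/n|\leq n^{-2/3+\varepsilon}$ follows by now-standard arguments. These estimates then feed into a Lindeberg swapping scheme, where each $x_{ij}$ is replaced in turn by a $\pm 1$ Bernoulli. A subtlety relative to the sample-covariance setting is that a single swap of $x_{ij}$ propagates through $\|\mathbf{x}_i\|$ to every entry of the $i$-th row of $Y$; nevertheless, these induced changes are small and the local law still controls the per-swap error by $o(n^{-2})$. Symmetry kills every odd Taylor coefficient and the variances match by construction, so the fourth-order remainder sums to $o(1)$ over the $pn$ swaps, and Theorem \ref{th.0.1} identifies the common edge limit as $TW_1$.

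The main obstacle is the local MP law itself. The self-normalization $y_{ij}=x_{ij}/\|\mathbf{x}_i\|$ destroys the independence of entries within each row of $Y$ that underlies the standard resolvent proofs for sample covariance matrices, and a naive comparison with $S=XX^T/n$ fails because $W-S$ has diagonal of order $n^{-1/2}$, well above the TW scale. My plan is first to establish the sharp concentration $\|\mathbf{x}_i\|^2=n(1+\epsilon_i)$ with $|\epsilon_i|\leq n^{-1/2+\varepsilon}$ uniformly in $i$, then to run the resolvent/Schur-complement analysis directly on $W$. Removing a row of $Y$ produces a quadratic form of the type $\|\mathbf{x}_i\|^{-2}\mathbf{x}_i^T H\mathbf{x}_i$ in a minor resolvent $H$: the numerator concentrates around $\operatorname{tr} H$ via the standard large-deviation bound for quadratic forms in i.i.d.\ symmetric variables, and the denominator is handled through the sharp control of $\epsilon_i$, so the resulting correction in the self-consistent equation for $m_W$ is small enough that a bootstrap starting from the global MP law (on scale $\eta\sim 1$) can iteratively absorb it down to the optimal scale $\eta\gtrsim n^{-2/3+\varepsilon}$.
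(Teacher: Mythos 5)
Your high-level strategy --- truncation, a local Marchenko--Pastur law up to the edge, a moment-matching Green function comparison, and identification of the limit via the Bernoulli case (Theorem \ref{th.0.1}) --- is the same as the paper's. But two points that you pass over in a sentence each are precisely where the real work lies, and as written your proposal has genuine gaps there.

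First, the swapping scheme. You propose an entry-by-entry Lindeberg replacement over all $pn$ entries and assert that the propagation of a single swap of $x_{ij}$ through $\|\mathbf{x}_i\|$ into the whole $i$-th row of $Y$ is ``small'' and yields a per-swap error $o(n^{-2})$. This is exactly the difficulty, not a remark: after a single entry swap the perturbation of $Y$ is not supported on one entry, the Taylor expansion of the resolvent in $x_{ij}$ acquires coefficients coupling $x_{ij}$ to every other entry of the row through $\|\mathbf{x}_i\|^{-1}$, and verifying that symmetry and second-moment matching still kill the low-order terms requires computing joint moments of the normalized entries $x_{1k}/\|\mathbf{x}_1\|$, which are correlated. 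The paper sidesteps this entirely by swapping one \emph{row} at a time ($p$ swaps, not $pn$): since $W_{\gamma-1}^{(\gamma)}=W_{\gamma}^{(\gamma)}$, each step reduces to comparing the effect of appending a full normalized row, and the relevant quantities are quadratic forms $\mathbf{y}_1^T(\mathcal{G}^{(1)})^2\mathbf{y}_1$ whose expectations are computed via the joint moments $\mathbb{E}\prod_i x_{1k_i}/\|\mathbf{x}_1\|$ (Theorem \ref{th.4.1}, Lemma \ref{le.4.1}). If you insist on entry swapping you must carry out the expansion of $\|\mathbf{x}_i\|^{-1}$ explicitly and control all the mixed terms; nothing in your proposal does this.

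Second, the resolvent-entry bounds that feed the comparison. Your local-law plan is built on row-removal Schur complements, which control the diagonal of $G=(W-z)^{-1}$. But the comparison step (whether row-wise or entry-wise) requires entrywise bounds on $\mathcal{G}^{(1)}=(Y^{(1)T}Y^{(1)}-z)^{-1}$ and on $(\mathcal{G}^{(1)})^2$ (the paper's Lemma \ref{le.4.2}), and these are obtained from the delocalization of the \emph{right} singular vectors of $Y$ (Theorem \ref{th.2.2}). That delocalization is the paper's declared main obstacle, because the columns of $Y$ are not independent (every column involves every $\|\mathbf{x}_i\|$), so the standard column-removal argument fails; the paper has to introduce the modified matrix $\widehat{Y}_{(n)}$, prove non-collision of eigenvalues for continuous entries, and establish a Talagrand-type concentration lemma for the projections of the dependent last column. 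Your proposal never mentions the right singular vectors or the dependence between columns, so this ingredient is missing. Until you supply a substitute for Theorem \ref{th.2.2} (or an isotropic local law for $Y^TY$ that survives the row normalization), the comparison step cannot be closed.
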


\begin{remark}
For technical reasons, it is convenient to work with the continuous random variables $x_{ij}$. As a result, the events such as eigenvalue collision will only occur with probability zero (see Lemma \ref{le.rrr}). Because  none of our bounds depends on how continuous the $x_{ij}$ are, one can recover
the discrete case from the continuous one by a standard limiting argument by using Weyl's inequality (see Lemma \ref{le.2.2}), especially for the Bernoulli case.
\end{remark}

If the population is normal, then we can derive the Tracy-Widom law for both the largest and smallest eigenvalues of the matrix
$\mathcal{R}=RR^T$, where
\begin{eqnarray}
R=(r_{ij})_{p, n}:=\left(
\begin{array}{cccc}
\frac{x_{11}-\bar x_1}{||\mathbf{x}_{1}-\bar x_1||} &\frac{x_{12}-\bar x_1}{||\mathbf{x}_{1}-\bar x_1||} &\cdots &\frac{x_{1n}-\bar x_1}{||\mathbf{x}_{1}-\bar x_1||}\\
\vdots &\vdots &\vdots &\vdots\\
\frac{x_{p1}-\bar x_p}{||\mathbf{x}_p-\bar x_p||} &\frac{x_{p2}-\bar x_p}{||\mathbf{x}_p-\bar x_p||} &\cdots &\frac{x_{pn}-\bar x_p}{||\mathbf{x}_p-\bar x_p||}
\end{array}
\right).
\label{r}
\end{eqnarray}
Here $\bar x_i=n^{-1}\sum_{j=1}^n x_{ij}$ and $\mathbf{x}_i-\bar x_i$ means each element $x_{ij}$ of $\mathbf{x}_i$ will be subtracted by $\bar x_i$, $i=1,\cdots,p$.
We denote the ordered eigenvalues of $\mathcal{R}$ by $0\leq \lambda_1(\mathcal{R})\leq\cdots\leq\lambda_p(\mathcal{R})$ below.
Actually $\mathcal{R}$ is the sample correlation matrix when the population mean is unknown.
\begin{theorem} \label{th.1.4} For the sample correlation matrix $\mathcal{R}$ with i.i.d $N(0,1)$ elements, if $p/n\rightarrow y\in(0,1)$, we have
\begin{eqnarray*}
\frac{n\lambda_p(\mathcal{R})-(p^{1/2}+n^{1/2})^2}{(n^{1/2}+p^{1/2})(p^{-1/2}+n^{-1/2})^{1/3}}\stackrel d\longrightarrow TW_1.
\end{eqnarray*}
and
\begin{eqnarray*}
\frac{n\lambda_1(\mathcal{R})-(p^{1/2}-n^{1/2})^2}{(n^{1/2}-p^{1/2})(p^{-1/2}-n^{-1/2})^{1/3}}\stackrel d\longrightarrow TW_1
\end{eqnarray*}
as $p\rightarrow\infty$.

\end{theorem}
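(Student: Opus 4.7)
The plan is to reduce Theorem \ref{th.1.4} to Theorem \ref{th.1.2} by exploiting the rotational invariance of the Gaussian distribution, and then to check that the modest change in the normalization is absorbed by the Tracy--Widom scaling.

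First, I would write the rows of $R$ as $P\mathbf{x}_i/\|P\mathbf{x}_i\|$, where $P=I_n-\frac{1}{n}\mathbf{1}\mathbf{1}^T$ is the orthogonal projection onto the hyperplane $\mathbf{1}^\perp\subset\mathbb{R}^n$. Choose any orthogonal matrix $Q\in O(n)$ whose first column is $\mathbf{1}/\sqrt{n}$; then $Q^T P Q=\mathrm{diag}(0,1,\ldots,1)$. Since the entries of $\mathbf{x}_i$ are i.i.d.\ $N(0,1)$, the rotated vector $Q^T\mathbf{x}_i$ is again i.i.d.\ $N(0,1)$, and its first coordinate is independent of the remaining $n-1$ coordinates. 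Writing the last $n-1$ coordinates as a new vector $\tilde{\mathbf{x}}_i\in\mathbb{R}^{n-1}$ of i.i.d.\ $N(0,1)$ entries (and noting that these collections are independent across $i$), we get that $RQ$ has its first column identically zero and its remaining block is exactly $\tilde{Y}$, the sample correlation matrix in the sense of (\ref{1.1}) but with dimensions $p\times(n-1)$. Therefore $\mathcal{R}=RR^T=(RQ)(RQ)^T=\tilde{Y}\tilde{Y}^T$, and the eigenvalues of $\mathcal{R}$ coincide with those of the uncentered sample correlation matrix built from $n-1$ i.i.d.\ $N(0,1)$ observations.

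Since $N(0,1)$ is symmetric with sub-exponential tails, and since $p/(n-1)\to y\in(0,1)$, Theorem \ref{th.1.2} applied to $\tilde{Y}\tilde{Y}^T$ gives
\begin{equation*}
\frac{(n-1)\lambda_p(\mathcal{R})-\bigl(p^{1/2}+(n-1)^{1/2}\bigr)^2}{\bigl(p^{1/2}+(n-1)^{1/2}\bigr)\bigl(p^{-1/2}+(n-1)^{-1/2}\bigr)^{1/3}}\xrightarrow{d} TW_1,
\end{equation*}
and similarly for $\lambda_1(\mathcal{R})$. It remains to compare this centering/scaling with the one in the theorem statement, namely with $n$ in place of $n-1$.

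The last step is a routine perturbative check. Observe $\lambda_p(\mathcal{R})=O(1)$ almost surely by the known edge location (together with rigidity from Theorem \ref{th.1.2}), so $|n\lambda_p(\mathcal{R})-(n-1)\lambda_p(\mathcal{R})|=O(1)$. Also $(p^{1/2}+n^{1/2})^2-(p^{1/2}+(n-1)^{1/2})^2=O(1)$, whereas the scaling denominator is of order $n^{1/3}$ and its ratio with the $n-1$ analogue is $1+O(n^{-1})$. Consequently the difference between the two normalized quantities is $O(n^{-1/3})\to 0$ in probability, and Slutsky's theorem promotes the convergence to $TW_1$ with the $n$-normalization. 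An identical argument works for $\lambda_1(\mathcal{R})$. The only non-routine ingredient is the rotational invariance step, which is entirely Gaussian-specific; this is precisely why Theorem \ref{th.1.4} is stated only for i.i.d.\ $N(0,1)$ entries, whereas Theorem \ref{th.1.2} works for general symmetric sub-exponential laws. I expect no serious obstacle beyond verifying that the relevant constants in the perturbative comparison indeed match at the $n^{1/3}$ scale.
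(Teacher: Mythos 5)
Your proposal is correct and is essentially the paper's own argument: both reduce $\mathcal{R}$ to an uncentered $W$-type correlation matrix with $p\times(n-1)$ i.i.d.\ Gaussian data via an orthogonal transformation whose first row/column is $\mathbf{1}/\sqrt{n}$ (the paper uses an explicit Helmert matrix where you use an abstract $Q$), and then invoke Theorem \ref{th.1.2}. Your final perturbative check replacing $n-1$ by $n$ is the same step the paper states without detail, and your justification of it is sound.
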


Throughout the paper, we will use the following ad hoc definitions on the frequent events provided in \cite{TV1}.\\\\
$\bf{Definition~1}$(Frequent events). \cite{TV1} Let $E$ be an event depending on $n$.\\\\
$\bullet$ $E$ holds \emph{asymptotically almost surely} if $\mathbb{P}(E)=1-o(1)$.\\
$\bullet$ $E$ holds with \emph{high probability} if $\mathbb{P}(E)\geq 1-O(n^{-c})$ for some constant $c>0$ (independent of $n$).\\
$\bullet$ $E$ holds with \emph{overwhelming probability} if $\mathbb{P}(E)\geq O_C(n^{-C})$ for every constant $C>0$ (or equivalently, that $\mathbb{P}(E)\geq 1-\exp(-\omega\log n)$).\\
$\bullet$ $E$ holds \emph{almost surely} if $\mathbb{P}(E)=1$.\\

The main strategy is to prove a so-called ``Green function comparison theorem'', which was raised by Erd\"{o}s, Yau and Yin in \cite{EYY} for generalized Wigner matrices. We will provide a ``Green function comparison theorem'' to the sample correlation matrices obeying the assumption $\mathbf{C}_1$ in Section 4, see Theorem \ref{th.4.1}. Then by the comparison theorem, we can compare the general distributed case with the Bernoulli case to get Theorem \ref{th.1.2}. And as an application, we can also get Theorem \ref{th.1.4}.

Our article is organized as follows. In Section 2, we state some basic tools, which can be also found in the series work \cite{TV1}, \cite{TV2},\cite{TV} and \cite{WK}. And we provide some main technical lemmas and theorems in Section 3. The most important one is the so-called delocalization property of singular vectors, which will be shown as an obstacle to establish the Green function comparison theorem in the sample correlation matrices case. And in Section 4, we provide a Green function comparison theorem to prove the edge universality for sample correlation matrices satisfying the assumption $\mathbf{C}_1$. In Section 5, we state the proofs for our main results: Theorem \ref{th.1.2} and Theorem \ref{th.1.4}.\\\\
\section{Basic Tools}
In this section, we state some basic tools from linear algebra and probability theory. Firstly, we denote the ordered singular values of $Y$ by
$$0\leq\sigma_1\leq\sigma_2\leq\cdots\leq\sigma_p,$$
then we have $\sigma_i=\lambda_i^{1/2}$. If we further denote the unit right singular vector of $Y$ corresponding $\sigma_i$ by $u_i$ and the left one by $v_i$, we have
\begin{eqnarray}
Yu_i=\sigma_iv_i \label{1.2}
\end{eqnarray}
and
\begin{eqnarray}
Y^Tv_i=\sigma_iu_i. \label{1.3}
\end{eqnarray}

Below we shall state some tools for eigenvalues, singular values and singular vectors without proof.
\begin{lemma} \emph{(Cauchy's interlacing law)}. Let $1\leq p\leq n$\\

\emph{(i)} If $A_n$ is an $n\times n$ Hermitian matrix, and $A_{n-1}$ is an $n-1\times n-1$ minor, then $\lambda_i(A_n)\leq \lambda_i(A_{n-1})\leq \lambda_{i+1}(A_n)$ for all $1\leq i<n$.

\emph{(ii)} If $A_{n,p}$ is a $p\times n$ matrix, and $A_{n,p-1}$ is a $p-1\times n$ minor, then $\sigma_i(A_{n,p})\leq \sigma_i(A_{n,p-1})\leq \sigma_{i+1}(A_{n,p})$ for all $1\leq i<p$.

\emph{(iii)} If $p<n$, $A_{n,p}$ is a $p\times n$ matrix, and $A_{n-1,p}$ is a $p\times n-1$ minor, then $\sigma_{i-1}(A_{n,p})\leq\sigma_i(A_{n-1,p})\leq\sigma_i(A_{n,p})$ for all $1\leq i\leq p$, with the understanding that $\sigma_0(A_{n,p})=0$. (For $p=n$,
one can consider its transpose and use \emph{(ii)} instead.)
\end{lemma}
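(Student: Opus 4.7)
The plan is to reduce all three parts of the lemma to a single tool: the Courant--Fischer min-max characterization of Hermitian eigenvalues, applied either to the matrix itself or to its Gram form.

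First, I would establish (i). Writing eigenvalues in ascending order, I would use the two dual min-max formulas
\[
\lambda_i(A_n)=\min_{\substack{V\subseteq\mathbb{C}^n\\ \dim V=i}}\max_{x\in V\setminus\{0\}}\frac{\langle A_n x,x\rangle}{\|x\|^2}=\max_{\substack{V\subseteq\mathbb{C}^n\\ \dim V=n-i+1}}\min_{x\in V\setminus\{0\}}\frac{\langle A_n x,x\rangle}{\|x\|^2}.
\]
Without loss of generality $A_{n-1}$ is the principal submatrix obtained by deleting the last row and column, so it acts on the coordinate hyperplane $W\simeq\mathbb{C}^{n-1}\subseteq\mathbb{C}^n$; moreover $\langle A_{n-1}x,x\rangle=\langle A_n x,x\rangle$ for every $x\in W$. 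The min-max formulas for $\lambda_i(A_{n-1})$ read exactly like those above but with the subspace $V$ constrained to lie inside $W$. The inequality $\lambda_i(A_n)\le\lambda_i(A_{n-1})$ follows from the first formula (shrinking the admissible family of $V$ can only raise the outer $\min$), and $\lambda_i(A_{n-1})\le\lambda_{i+1}(A_n)$ follows from the second formula applied to $A_n$ with $V\subseteq W$ of dimension $n-i$.

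Next, I deduce (ii) by working with the Gram matrix $A_{n,p}A_{n,p}^{\ast}$, a $p\times p$ Hermitian matrix whose ordered eigenvalues are exactly $\sigma_1^2\le\dots\le\sigma_p^2$. Deleting a row of $A_{n,p}$ removes the corresponding row and column from $A_{n,p}A_{n,p}^{\ast}$, so $A_{n,p-1}A_{n,p-1}^{\ast}$ is a principal $(p-1)\times(p-1)$ submatrix with eigenvalues $\sigma_1^2(A_{n,p-1})\le\dots\le\sigma_{p-1}^2(A_{n,p-1})$. Applying (i) to these Hermitian matrices and taking square roots gives (ii).

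Part (iii) is essentially the same trick on the other side, but with bookkeeping. I consider $A_{n,p}^{\ast}A_{n,p}$, an $n\times n$ Hermitian matrix of rank at most $p$; its ordered eigenvalues are $0$ with multiplicity $n-p$ followed by $\sigma_1^2(A_{n,p}),\dots,\sigma_p^2(A_{n,p})$. Removing a column of $A_{n,p}$ deletes the matching row and column from $A_{n,p}^{\ast}A_{n,p}$, so $A_{n-1,p}^{\ast}A_{n-1,p}$ is an $(n-1)\times(n-1)$ principal submatrix, with eigenvalues $0$ (multiplicity $n-1-p$) followed by $\sigma_1^2(A_{n-1,p}),\dots,\sigma_p^2(A_{n-1,p})$. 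Applying (i) and substituting the index $j=n-1-p+i$ converts the interlacing into $\sigma_{i-1}^2(A_{n,p})\le\sigma_i^2(A_{n-1,p})\le\sigma_i^2(A_{n,p})$, where the boundary case $i=1$ uses the convention $\sigma_0(A_{n,p})=0$ stated in the lemma. Taking square roots yields (iii). The only subtle point in the whole argument is this index alignment in (iii), where the $n-p$ zero eigenvalues of $A_{n,p}^{\ast}A_{n,p}$ must be threaded correctly through the interlacing chain, but this is purely a bookkeeping exercise rather than a genuine obstacle.
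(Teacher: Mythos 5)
Your proof is correct, and it uses the standard argument: the paper states this lemma as a known tool without proof (citing the Tao--Vu line of work), so there is no internal proof to compare against. Your Courant--Fischer treatment of (i), the reduction of (ii) to (i) via the $p\times p$ Gram matrix $A_{n,p}A_{n,p}^{\ast}$, and the reduction of (iii) via the $n\times n$ Gram matrix $A_{n,p}^{\ast}A_{n,p}$ with the index shift $j=n-1-p+i$ (including the convention $\sigma_0(A_{n,p})=0$ absorbing the extra zero eigenvalue when $i=1$) are all handled correctly.
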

\begin{lemma}\label{le.2.2}\emph{(Weyl's inequality)} Let $1\leq p\leq n$\\

$\bullet$ If $M,N$ are $n\times n$ Hermitian matrices, then $||\lambda_i(M)-\lambda_i(N)||\leq ||M-N||_{op}$ for all $1\leq i\leq n$.

$\bullet$ If $M,N$ are $p\times n$ matrices, then $||\sigma_i(M)-\sigma_i(N)||\leq ||M-N||_{op}$  for all $1\leq i\leq p$.
\end{lemma}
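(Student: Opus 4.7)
The plan is to prove both parts of Weyl's inequality by exploiting the Courant--Fischer min--max characterization of eigenvalues and singular values, which reduces the perturbation bound to a uniform estimate of the quadratic form (or operator action) of $M-N$.

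First I would handle the Hermitian case. Recall the min--max representation
\[
\lambda_i(M) \;=\; \max_{\substack{V\subset\mathbb C^n\\ \dim V = i}}\; \min_{\substack{x\in V\\ \|x\|=1}} \langle Mx,x\rangle .
\]
For any unit vector $x$, the Cauchy--Schwarz inequality (or the definition of the operator norm) gives
\[
\langle Mx,x\rangle \;=\; \langle Nx,x\rangle + \langle (M-N)x,x\rangle \;\le\; \langle Nx,x\rangle + \|M-N\|_{op}.
\]
Taking the minimum over unit $x\in V$ and then the maximum over $i$-dimensional subspaces $V$ of $\mathbb C^n$ yields $\lambda_i(M)\le \lambda_i(N) + \|M-N\|_{op}$. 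Interchanging the roles of $M$ and $N$ (and using $\|M-N\|_{op}=\|N-M\|_{op}$) completes the Hermitian bound.

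For the rectangular case I would use the Hermitian dilation trick: to a $p\times n$ matrix $M$ associate the $(p+n)\times(p+n)$ Hermitian block matrix
\[
\widetilde{M} \;=\; \begin{pmatrix} 0 & M \\ M^{*} & 0 \end{pmatrix}.
\]
A standard computation shows that the nonzero eigenvalues of $\widetilde M$ are precisely $\pm\sigma_i(M)$ for $1\le i\le p$ (with the remaining $n-p$ eigenvalues equal to zero), and that $\|\widetilde M - \widetilde N\|_{op} = \|M-N\|_{op}$. Applying the Hermitian case already proved to $\widetilde M$ and $\widetilde N$, then reading off the $i$-th positive eigenvalue of each, immediately gives $|\sigma_i(M)-\sigma_i(N)|\le \|M-N\|_{op}$.

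There is no real obstacle here; the argument is a textbook variational one. The only point worth double-checking is that the Hermitian dilation preserves the operator norm (this follows from $\|\widetilde M\|_{op}^2 = \|\widetilde M^*\widetilde M\|_{op}$ together with the block form of $\widetilde M^*\widetilde M$) and that the eigenvalue ordering for $\widetilde M$ matches the singular value ordering for $M$, so that the $i$-th singular value of $M$ corresponds to the $i$-th largest positive eigenvalue of $\widetilde M$. Alternatively, one could avoid the dilation and argue directly from the variational formula $\sigma_i(M) = \max_{\dim V=i}\min_{x\in V,\,\|x\|=1}\|Mx\|$, using $\|Mx\| \le \|Nx\| + \|(M-N)x\| \le \|Nx\| + \|M-N\|_{op}$; either route produces the desired inequality.
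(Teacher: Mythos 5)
Your proof is correct. Note that the paper states this lemma as a standard tool \emph{without} proof (it is quoted from the linear-algebra background in \cite{TV1}, \cite{TV}), so there is no in-paper argument to compare against; your min--max argument for the Hermitian case combined with the Hermitian dilation (or the direct variational formula for singular values) is exactly the standard textbook proof one would supply. The only cosmetic point is that your Courant--Fischer formula indexes eigenvalues in decreasing order while the paper orders $\lambda_1\leq\cdots\leq\lambda_p$ increasingly; since the perturbation bound is symmetric in the index convention (as long as $M$ and $N$ are ordered the same way), this does not affect the validity of the argument.
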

The following lemma is on the components of a singular vector, which can be found in \cite{TV}.
\begin{lemma}\label{le.2.0}\cite{TV}
Let $p,n\geq 1$, and let
\begin{eqnarray*}
A_{p,n}=(A_{p,n-1}~~h)
\end{eqnarray*}
be a $p\times n$ matrix with $h\in\mathbb{C}^p$, and let $\binom{u}{x}$ be a right unit singular vector of $A_{p,n}$ with singular value $\sigma_i(A_{p,n})$, where $x\in \mathbb{C}$ and $u\in\mathbb{C}^{n-1}$. Suppose that none of the singular values of $A_{p,n-1}$ is equal to $\sigma_i(A_{p,n})$. Then
\begin{eqnarray*}
|x|^2=\frac{1}{1+\sum_{j=1}^{\min(p,n-1)}\frac{\sigma_j(A_{p,n-1})^2}{(\sigma_j(A_{p,n-1})^2-\sigma_i(A_{p,n})^2)^2}|v_j(A_{p,n-1})\cdot h|^2},
\end{eqnarray*}
 where $\{v_1(A_{p,n-1}),\cdots, v_{\min(p.n-1)}(A_{p,n-1})\in\mathbb{C}^p\}$ is an orthonormal system of left singular vectors corresponding to the non-trivial singular values of $A_{p,n-1}$ and \\ $v_j(A_{p,n-1})\cdot h=v_j(A_{p,n-1})^*h$ with $v_j(A_{p,n-1})^*$ being the complex conjugate of $v_j(A_{p,n-1}).$ \\

 Similarly, if
 \begin{eqnarray*}
 A_{p,n}=\binom{A_{p-1,n}}{l^*}
 \end{eqnarray*}
 for some $l\in \mathbb{C}^n$, and $(v^T,y)^T$ is a left unit singular vector of $A_{p.n}$ with singular value $\sigma_i(A_{p,n})$, where $y\in \mathbb{C}$ and $v\in\mathbb{C}^{p-1}$, and none of the singular values of $A_{p-1,n}$ are equal to $\sigma_i(A_{p,n})$, then
 \begin{eqnarray*}
 |y|^2=\frac{1}{1+\sum_{j=1}^{\min(p-1,n)}\frac{\sigma_{j}(A_{p-1,n})^2}{(\sigma_j(A_{p-1,n})^2-\sigma_i(A_{p,n})^2)^2}|u_j(A_{p-1,n})\cdot l|^2},
 \end{eqnarray*}
 where $\{u_1(A_{p-1,n}),\cdots, u_{\min(p-1,n)}(A_{p-1,n})\in\mathbb{C}^n\}$ is an orthonormal system right singular vectors corresponding to the non-trivial singular values of $A_{p-1,n}$.
\end{lemma}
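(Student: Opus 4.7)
The plan is to derive the formula directly from the defining eigenequation $A_{p,n}^*A_{p,n}\binom{u}{x} = \sigma^2\binom{u}{x}$, where $\sigma := \sigma_i(A_{p,n})$. Writing $B := A_{p,n-1}$, the identity $A_{p,n} = (B~h)$ gives the $2\times 2$ block decomposition
\[
A_{p,n}^*A_{p,n} = \begin{pmatrix} B^*B & B^*h \\ h^*B & \|h\|^2 \end{pmatrix},
\]
so the eigenequation splits into the coupled system
\[
(B^*B - \sigma^2 I)\,u = -x\,B^*h, \qquad h^*B\,u + (\|h\|^2 - \sigma^2)\,x = 0.
\]

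The main step is to invert the first of these equations for $u$ in terms of $x$. By the non-resonance hypothesis, $\sigma^2$ is not among the non-trivial eigenvalues $\sigma_j(B)^2$ of $B^*B$; moreover $B^*h$ lies in the range of $B^*$, which is orthogonal to $\ker(B^*B)$, so the restriction of $B^*B - \sigma^2 I$ to the subspace containing $B^*h$ is invertible. Plugging in the thin SVD $B = \sum_{j=1}^{\min(p,n-1)} \sigma_j(B)\,v_j(B)\,w_j^*$ with $\{w_j\}$ orthonormal in $\mathbb{C}^{n-1}$, I obtain $B^*h = \sum_j \sigma_j(B)\,(v_j(B)^*h)\,w_j$ and therefore
\[
u = -x \sum_{j=1}^{\min(p,n-1)} \frac{\sigma_j(B)\,(v_j(B)^*h)}{\sigma_j(B)^2 - \sigma^2}\,w_j.
\]
Taking squared norms using orthonormality of the $w_j$ and imposing the unit-length constraint $\|u\|^2 + |x|^2 = 1$, I can solve for $|x|^2$ to recover exactly the stated formula.

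The second identity, for the bottom component of a left unit singular vector, will follow by applying the first part to the conjugate transpose: a left singular vector of $A_{p,n} = \binom{A_{p-1,n}}{\ell^*}$ is a right singular vector of $A_{p,n}^* = (A_{p-1,n}^*~\ell)$, which is of the form covered by the first part with minor $A_{p-1,n}^*$, last column $\ell$, and singular value $\sigma_i(A_{p,n})$. Since the left singular vectors of $A_{p-1,n}^*$ are exactly the right singular vectors $u_j(A_{p-1,n})$ of $A_{p-1,n}$, the formula transcribes directly with $u_j(A_{p-1,n})$ replacing $v_j(A_{p,n-1})$ and $\ell$ replacing $h$. I do not expect any genuine obstacle here: the only delicate point is the invertibility observation in the previous paragraph, and the hypothesis explicitly rules out the bad case. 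In essence this is a Schur-complement computation written in the singular-value basis, with the second bullet obtained from the first by swapping left-right conventions.
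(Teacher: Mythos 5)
Your derivation is correct. The paper itself gives no proof of this lemma --- it is quoted verbatim from the cited reference [TV] --- and your argument (block decomposition of $A_{p,n}^*A_{p,n}$, solving $(B^*B-\sigma^2 I)u=-xB^*h$ in the singular basis of $B=A_{p,n-1}$, then using $\|u\|^2+|x|^2=1$, with the second identity obtained by passing to the adjoint) is exactly the standard proof given there. The only point worth tightening is the invertibility remark: solving on ``the subspace containing $B^*h$'' would determine $u$ only modulo $\ker(B^*B-\sigma^2 I)$, but in fact the hypothesis forces $B^*B-\sigma^2 I$ to be invertible on all of $\mathbb{C}^{n-1}$ (immediately when $\sigma\neq 0$, since the extra trivial eigenvalues of $B^*B$ are $0\neq\sigma^2$; and when $\sigma=0$ the hypothesis forces $B$ to have full rank, which is incompatible with $A_{p,n}$ having a vanishing singular value unless $B$ has full column rank), so $u$ is uniquely determined and the computation closes.
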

Further, we need a frequently used tool in the Random Matrix Theory: the Stieltjes transform of ESD $F_{p}(x)$, which is defined by
\begin{eqnarray*}
s_p(z)=\int\frac{1}{x-z}dF_p(x)
\end{eqnarray*}
for any $z=E+i\eta$ with $E\in\mathbb{R}$ and $\eta>0$. If we introduce the Green function $G(z)=(W-z)^{-1}$, we also have
\begin{eqnarray}
s_p(z)=\frac1p TrG(z)=\frac1p\sum_{k=1}^{p}G_{kk}. \label{2.3}
\end{eqnarray}
Here we denote $G_{jk}$ as the $(j,k)$ entry of $G(z)$. As is well known, the convergence of a tight
probability measure sequence is equivalent to the convergence of its Stieltjes transform
sequence towards the corresponding transform of the limiting measure. So corresponding to the convergence of $F_p(x)$ towards $F_{MP,y}(x)$,
%As a generalization of the sample covariance matrices case, Jiang \cite{Jiang} proved the ESD $F_p(x)$ of $W$ almost surely converges to
the famous Mar\u{c}enko-Pastur law $F_{MP,y}(x)$ whose density function is given by
\begin{eqnarray}
\rho_{MP,y}=\frac{1}{2\pi xy}\sqrt{(b-x)(x-a)}\mathbf{1}_{[a,b]}(x),\label{1.000}
\end{eqnarray}
where $a=(1-\sqrt{y})^2, b=(1+\sqrt{y})^2$, %Simultaneously, under the assumption of the existence of the $4$-th moment of $x_{ij}$, he also provided a.s convergence results on the extreme eigenvalues of $W$ as the following theorem.
%\begin{theorem}\emph{(Jiang,\cite{Jiang})}
%Suppose $p/n\rightarrow y\in (0,\infty)$. If $\mathbb{E}(x_{ij})^4\leq \infty$ and $\mathbb{E}x_{ij}=0$, then we have
%\begin{eqnarray*}
%\lambda_1\rightarrow a\quad a.s.
%\end{eqnarray*}
%and
%\begin{eqnarray*}
%\lambda_p\rightarrow b\quad a.s.
%\end{eqnarray*}
%\end{theorem}
$s_p(z)$ almost surely converges to the Stieltjes transform $s(z)$ of $F_{MP,y}(x)$.
 Here
\begin{eqnarray}
s(z)=\frac{1-y-z+\sqrt{(z-1-y)^2-4y}}{2yz}, \label{2.10.0}
\end{eqnarray}
where the square root is defined as the analytic extension of the positive square root of the positive numbers. Moreover, $s(z)$ satisfies the equation
\begin{eqnarray}
s(z)+\frac{1}{y+z-1+yzs(z)}=0. \label{2.10}
\end{eqnarray}

If we denote the $k$-th row of $Y$ by $\mathbf{y}_k^T$ and the remaining $(p-1)\times n$ matrix after deleting $\mathbf{y}_k^T$ by $Y^{(k)}$, one has
\begin{eqnarray*}
W=\left(
\begin{array}{ccc}
1 &\mathbf{y}_1^TY^{(1)T}\\
Y^{(1)}\mathbf{y}_1  &Y^{(1)}Y^{(1)T}
\end{array}
\right).
\end{eqnarray*}
By Schur's complement,
\begin{eqnarray}
G_{11}&=&\frac{1}{1-z-\mathbf{y}_1^T Y^{(1)T}(Y^{(1)}Y^{(1)T}-z)^{-1}Y^{(1)}\mathbf{y}_1}\nonumber\\
&=&\frac{1}{1-z-\mathbf{y}_1^TY^{(1)T}Y^{(1)}(Y^{(1)T}Y^{(1)}-z)^{-1}\mathbf{y}_1}.\label{2.1}
\end{eqnarray}
The formula of $G_{kk}$ is analogous. By (\ref{2.3}), we have the following lemma on the decomposition of $s_p(z)$:
\begin{lemma} \label{le.1.3}For the matrix $W$, we have
\begin{eqnarray*}
s_p(z)=\frac1p\sum_{k=1}^p\frac{1}{1-z-\mathbf{y}_k^TY^{(k)T}Y^{(k)}(Y^{(k)T}Y^{(k)}-z)^{-1}\mathbf{y}_k}.
\end{eqnarray*}
\end{lemma}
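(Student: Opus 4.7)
The plan is straightforward because the essential work has already been packaged into equations~(\ref{2.3}) and~(\ref{2.1}); the lemma is really a bookkeeping consequence of them. First, I would invoke~(\ref{2.3}) to reduce the claim to the pointwise identity
\[
G_{kk}(z) \;=\; \frac{1}{1-z-\mathbf{y}_k^T Y^{(k)T}Y^{(k)}(Y^{(k)T}Y^{(k)}-z)^{-1}\mathbf{y}_k}
\]
for every $1\le k\le p$. Summing over $k$ and dividing by $p$ then gives the displayed formula for $s_p(z)$ at once.

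To establish the identity for fixed $k$, I would symmetrically permute the $k$-th row and column of $W$ into the first slot; the $(k,k)$ Green-function entry is invariant under such a relabeling. After this permutation, the block decomposition used to derive~(\ref{2.1}) applies verbatim. The $(k,k)$ diagonal entry $W_{kk}=\|\mathbf{y}_k\|^2$ equals $1$ by the normalization built into $Y$, the off-diagonal blocks are $\mathbf{y}_k^T Y^{(k)T}$ and $Y^{(k)}\mathbf{y}_k$, and Schur's complement yields
\[
G_{kk}(z)\;=\;\frac{1}{1-z-\mathbf{y}_k^T Y^{(k)T}(Y^{(k)}Y^{(k)T}-z)^{-1}Y^{(k)}\mathbf{y}_k}.
\]
The last manipulation is to rewrite the $(p-1)\times(p-1)$ resolvent as an $n\times n$ one via the push-through identity $Y^{(k)T}(Y^{(k)}Y^{(k)T}-z)^{-1}=(Y^{(k)T}Y^{(k)}-z)^{-1}Y^{(k)T}$; this is valid for $\mathrm{Im}\,z>0$ because $Y^{(k)}Y^{(k)T}$ and $Y^{(k)T}Y^{(k)}$ are self-adjoint and share their nonzero spectrum, so the transformation between the two forms is the same one already exhibited in going from the first to the second line of~(\ref{2.1}).

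The only thing that needs checking is the applicability of Schur's complement, i.e.\ the invertibility of the $(p-1)\times(p-1)$ block $Y^{(k)}Y^{(k)T}-z$; this is automatic for $z=E+i\eta$ with $\eta>0$, since the block is self-adjoint and $z$ lies in its resolvent set. There is no genuine obstacle: the lemma is merely a convenient restatement of~(\ref{2.1}) and~(\ref{2.3}) in the form that will be useful for the self-consistent-equation analysis of $s_p(z)$ that follows, where each summand is controlled by a quadratic-form concentration argument applied to the vector $\mathbf{y}_k$.
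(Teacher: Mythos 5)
Your proposal is correct and follows essentially the same route as the paper: the paper derives the $k=1$ case in (\ref{2.1}) via Schur's complement (using $W_{kk}=\|\mathbf{y}_k\|^2=1$ and the push-through identity to pass from the $(p-1)\times(p-1)$ resolvent to the $n\times n$ one), declares the other $k$ analogous, and then averages using (\ref{2.3}). Your permutation argument simply makes the ``analogous'' step explicit, and the invertibility check for $\eta>0$ is the right (and only) point to verify.
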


The last main tool we need comes from the probability theory, which is a concentration inequality for projections of random vectors. The details of the proof can also be found in \cite{TV1}.
\begin{lemma}\label{le.2.5}
Let $\mathcal{X}=(\xi_1,\cdots,\xi_n)^T\in\mathbb{C}^n$ be a random vector whose entries are independent with mean zero, variance 1, and are bounded in magnitude by $K$ almost surely for some $K$, where $K\geq 10(\mathbb{E}|\xi|^4+1)$. Let $H$ be a subspace of dimension $d$ and $\pi_H$ the orthogonal projection onto $H$. Then
\begin{eqnarray*}
\mathbb{P}(|||\pi_H(\mathcal{X})||-\sqrt{d}|\geq t)\leq 10\exp(-\frac{t^2}{10K^2}).
\end{eqnarray*}
In particular, one has
\begin{eqnarray*}
||\pi_H(\mathcal{X})||=\sqrt{d}+O(K\log n)
\end{eqnarray*}
with overwhelming probability.
\end{lemma}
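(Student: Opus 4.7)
The approach is to invoke Talagrand's concentration inequality for convex Lipschitz functions of independent bounded random variables. The key structural observation is that $f(x) := \|\pi_H x\|$ is convex on $\mathbb{C}^n$ (as the composition of a norm with a linear map) and is $1$-Lipschitz with respect to the Euclidean metric, because the orthogonal projection $\pi_H$ has operator norm equal to $1$. Viewing $\mathbb{C}^n$ as $\mathbb{R}^{2n}$ so that $\mathcal{X}$ consists of $2n$ independent real coordinates, each bounded in magnitude by $K$, Talagrand's inequality then yields
\begin{equation*}
\mathbb{P}\bigl(|f(\mathcal{X}) - M| \geq t\bigr) \leq 4 \exp\Bigl(-\frac{t^2}{16K^2}\Bigr),
\end{equation*}
where $M$ is a median of $f(\mathcal{X})$.

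Next I would pin down the centering. Using mean zero, unit variance and independence of the entries, a direct computation gives
\begin{equation*}
\mathbb{E} f(\mathcal{X})^2 \;=\; \mathbb{E}\,\mathcal{X}^{*} \pi_H \mathcal{X} \;=\; \mathrm{tr}(\pi_H) \;=\; d.
\end{equation*}
Integrating the tail bound above shows that $\mathrm{Var}\, f = O(K^2)$ and $|M - \mathbb{E} f| = O(K)$. Combined with $\mathbb{E} f^2 = d$, this forces $M^2 = d + O(K^2)$, so $M = \sqrt{d} + O(K)$. (When $d \lesssim K^2$ the stated tail bound is vacuous for the relevant range of $t$ and needs no separate treatment.) Substituting $M = \sqrt{d} + O(K)$ back into the Talagrand estimate and absorbing the $O(K)$ discrepancy into $t$, at the cost of enlarging the absolute constants, produces the claimed tail bound $10\exp(-t^2/(10K^2))$.

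The ``in particular'' conclusion then follows by taking $t$ of order $K\log n$: the right-hand side becomes super-polynomially small in $n$, which is exactly the notion of overwhelming probability given in Definition 1.

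The one point that requires care is the transfer from the median to the explicit center $\sqrt{d}$: Talagrand's inequality naturally concentrates around the median, while the stated bound is centered at $\sqrt{d}$. Reconciling the two relies on the variance identity $\mathbb{E} f^2 = d$ combined with the Gaussian-type tail for $f - M$ to upgrade second-moment information into an $O(K)$ bound on $|M - \sqrt{d}|$. The fourth moment hypothesis $K \geq 10(\mathbb{E}|\xi|^4 + 1)$ in the statement plays only an auxiliary role here, ensuring that $K$ dominates the intrinsic variance scale and excluding degenerate small-$K$ regimes in which the Talagrand bound could otherwise be weaker than $10\exp(-t^2/(10K^2))$.
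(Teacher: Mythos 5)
Your proof is correct, and its skeleton is the same as the one in the reference the paper points to for this lemma (the paper itself gives no proof of Lemma \ref{le.2.5}; it cites \cite{TV1}, where the argument is exactly Talagrand's inequality applied to the convex $1$-Lipschitz function $x\mapsto\|\pi_H x\|$, followed by a transfer from the median to $\sqrt{d}$). Where you genuinely diverge is in that transfer step. Tao and Vu compute $\mathrm{Var}(\|\pi_H(\mathcal{X})\|^2)$ directly, which brings in the fourth moments of the entries and is precisely why the hypothesis $K\geq 10(\mathbb{E}|\xi|^4+1)$ appears in the statement; Chebyshev then places $\|\pi_H(\mathcal{X})\|^2$ within $O(K\sqrt{d})$ of $d$ with probability at least $1/2$, which pins the median. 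You instead integrate the Talagrand tail itself to get $\mathbb{E}(f-M)^2=O(K^2)$, hence $\mathrm{Var}(f)=O(K^2)$ and $|M-\mathbb{E}f|=O(K)$, and combine this with the exact identity $\mathbb{E}f^2=\mathrm{tr}(\pi_H)=d$ to conclude $|M-\sqrt{d}|=O(K)$ via $|\sqrt{a}-\sqrt{b}|\leq\sqrt{|a-b|}$. This is cleaner and, as you correctly observe, makes the fourth-moment hypothesis essentially superfluous; it buys a marginally more general statement at no cost. One small inaccuracy: for complex entries you cannot justify Talagrand by ``viewing $\mathbb{C}^n$ as $\mathbb{R}^{2n}$ with $2n$ independent real coordinates,'' since $\Re\xi_i$ and $\Im\xi_i$ need not be independent of each other. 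The correct fix is immediate: Talagrand's inequality holds for any product measure on $\mathbf{D}^n$ with $\mathbf{D}$ the complex disk of radius $K$ (this is the form quoted as Theorem \ref{th.5.1} in Appendix B), so independence of the $n$ complex entries suffices.
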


\section{Main Technical Results}
In this section, we provide our main technical results: the local MP law for sample correlation matrices, and the delocalization property for the singular vectors. Both results will be proved under much weaker assumption than $\mathbf{C}_1$. We form them into the following two theorems.

Let us introduce more notation. For any interval $I\subset \mathbb{R}$, we use $N_I$ to denote the number of the eigenvalues of $W$ falling into $I$, and use $|I|$ to denote the length of $I$.
\begin{theorem}\label{th.2.1}(Local MP law).
 Assume that $p/n\rightarrow y$ with $0<y<1$. And $\{x_{ij}: 1\leq i\leq p, 1\leq j\leq n\}$ is a collection of independent (but not necessary identically distributed) random variables with mean zero and variance 1. If $|x_{ij}|\leq K$ almost surely for some $K=o(p^{1/C_0}\delta^2log^{-1}p)$ with some $0<\delta<1/2$ and some large constant $C_0$ for all $i,j$, one has with overwhelming probability that the number of eigenvalues $N_I$ for any interval $I\subset[a/2,2b]$ with $|I|\geq \frac{K^2\log^{7}p}{\delta^9p}$ obeys
\begin{eqnarray}
|N_I-p\int_{I}\rho_{MP,y}(x)dx|\leq\delta p|I|. \label{3.68}
\end{eqnarray}
\end{theorem}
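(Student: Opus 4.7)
The plan is to prove a Stieltjes transform estimate at mesoscopic scales and then convert it to the eigenvalue counting bound \eqref{3.68} by a standard Helffer--Sj\"ostrand type argument as in \cite{TV1}. Concretely, I would aim to show that with overwhelming probability, $|s_p(z)-s(z)|\leq c\delta$ uniformly for $z=E+i\eta$ with $E\in[a/2,2b]$ and $\eta\geq K^2\log^{7}p/(\delta^9 p)$, where $s(z)$ is given by \eqref{2.10.0}. The translation from Stieltjes transform control on this scale to $N_I$ control on intervals $|I|\geq\eta$ is standard and loses only polylogarithmic factors.

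For the Stieltjes transform bound I would start from Lemma \ref{le.1.3} and show that each diagonal Green function entry $G_{kk}(z)$ is close to a common deterministic value. Writing $\mathbf{y}_k=\mathbf{x}_k/\|\mathbf{x}_k\|$, the quadratic form in the denominator of \eqref{2.1} becomes $\mathbf{x}_k^T A_k\mathbf{x}_k/\|\mathbf{x}_k\|^2$, where $A_k:=Y^{(k)T}Y^{(k)}(Y^{(k)T}Y^{(k)}-z)^{-1}$ is independent of $\mathbf{x}_k$. Lemma \ref{le.2.5} applied directly to $\mathbf{x}_k$ yields $\|\mathbf{x}_k\|^2=n+O(K\sqrt{n}\log n)$ with overwhelming probability, reducing the analysis to the covariance-type quadratic form $n^{-1}\mathbf{x}_k^T A_k\mathbf{x}_k$. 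Splitting $A_k$ into real and imaginary, positive and negative spectral parts, and applying Lemma \ref{le.2.5} dyadically to the orthogonal projections onto bands of eigenvectors of $A_k$ then produces a concentration statement of the form
\[
\frac{1}{n}\mathbf{x}_k^T A_k\mathbf{x}_k=\frac{1}{n}\mathrm{tr}\,A_k+O\Bigl(\frac{K\log^{O(1)}p}{\sqrt{n\eta}}\Bigr)
\]
with overwhelming probability, using a crude a priori bound $\|G(z)\|_{op}\leq 1/\eta$.

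The next step rewrites $n^{-1}\mathrm{tr}\,A_k=1+(z/n)\,\mathrm{tr}(Y^{(k)T}Y^{(k)}-z)^{-1}$, and uses Cauchy's interlacing (Lemma 2.1) to replace this trace by $ps_p(z)$ up to an $O(1/(p\eta))$ error; substituting back into Lemma \ref{le.1.3} produces an approximate self-consistent equation
\[
s_p(z)+\frac{1}{y+z-1+yzs_p(z)}=O(\epsilon),
\]
where $\epsilon$ combines the concentration and interlacing errors. Comparing to \eqref{2.10} and invoking the standard stability analysis of the MP equation --- which gives $|s_p-s|\lesssim \epsilon/|\mathrm{Im}\,s|$ in the bulk and $|s_p-s|\lesssim \sqrt{\epsilon}$ near the spectral edges where $\mathrm{Im}\,s\sim\sqrt{\eta}$ --- yields the desired Stieltjes transform bound. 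The exponent $\delta^9$ in the threshold is dictated by iterating the square-root instability once and then propagating through the conversion to $N_I$.

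The main obstacle is the quadratic-form concentration step. Lemma \ref{le.2.5} controls $\|\pi_H(\mathbf{x})\|$ for a subspace $H$, not a general indefinite bilinear form $\mathbf{x}^T A\mathbf{x}$, and $A_k$ has operator norm blowing up like $\eta^{-1}$ as $\eta\downarrow 0$; extracting the concentration rate that matches the claimed $\eta\geq K^2\log^{7}p/(\delta^9p)$ requires a careful dyadic decomposition of the spectrum of $A_k$, a union bound over these bands, and one bootstrap step that feeds the crude Green function bound back into the concentration estimate to gain the $\eta^{-1/2}$ factor. Once this concentration is in hand, truncation, interlacing, and the self-consistent stability argument are by now routine.
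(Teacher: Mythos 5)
Your proposal follows essentially the same route as the paper: a Stieltjes transform bound $|s_p(z)-s(z)|\leq\delta$ at scale $\eta\gtrsim K^2\log^{O(1)}p/(\delta^{O(1)}p)$ converted to the counting estimate via the standard mechanism (the paper's Proposition \ref{pr.3.4}), with the Stieltjes bound obtained from the Schur-complement decomposition of Lemma \ref{le.1.3}, concentration of the quadratic form $\mathbf{y}_k^T\mathcal{W}^{(k)}\mathcal{G}^{(k)}\mathbf{y}_k$ by applying Lemma \ref{le.2.5} to spectral projections after factoring out $n/\|\mathbf{x}_k\|^2=1+O(K^2\log^2 n/\sqrt{n})$, Cauchy interlacing to close the self-consistent equation, and stability of the MP equation. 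The ingredients you flag as delicate (the crude a priori eigenvalue-count bound feeding back into the dyadic concentration, and the treatment of the correlation-specific normalization) are exactly the points the paper isolates in Lemma \ref{le.2.4} and Lemma \ref{le.3.5.5}.
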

\begin{remark}
The topic of the limiting spectrum distribution on short scales was firstly raised by Erd\H{o}s, Schlein and Yau in \cite{ESY1} for Wigner matrices. Such type of results are shown to be quite necessary for the proof of the famous universality conjectures in the Random Matrix Theory, for example, see \cite{ESYY} and \cite{TV1}.
\end{remark}
\begin{remark}
A strong type of the local MP law has been established for more general matrix models in a very recent paper of Pillai and Yin, see Theorem 1.5, \cite{PY}. In fact, from Theorem 1.5 of \cite{PY}, one can get a more precise bound than that in (\ref{3.68}) if we replace $\rho_{MP,y}(x)$ by the nonasymptotic MP law $\rho_{W}(x)$ defined in Section 4. Moreover, Pillai and Yin's strong local MP law also provides some crucial estimates on individual elements of the Green function $G$, which will be used to establish our Green function comparison theorem in Section 4.
\end{remark}
\begin{theorem} \label{th.2.2}(Delocalization of singular vectors)
Under the assumptions of Theorem \ref{th.2.1} and $\mathbb{E}x_{ij}^3=0$, if we assume $x_{ij}'s$ are continuous random variables, then with overwhelming probability all the left and right unit singular vectors of $W$ have all components uniformly of size at most $p^{-1/2}K^{C_0/2}\log^{O(1)}p$.
\end{theorem}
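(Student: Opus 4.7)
The plan is to follow the Erdős--Schlein--Yau / Tao--Vu strategy: apply the singular-vector component formula of Lemma \ref{le.2.0} to reduce the bound on a component $|(v_i)_k|$ (or $|(u_i)_j|$) to a lower bound on a sum of the form $\sum_{\ell}\sigma_{\ell}^2(\sigma_{\ell}^2-\sigma_i^2)^{-2}|u_{\ell}\cdot h|^2$. Two ingredients feed this lower bound: the local MP law (Theorem \ref{th.2.1}), which counts how many singular values of a minor fall in a short interval $I$ around $\sigma_i^2$, and Lemma \ref{le.2.5}, which concentrates $\|\pi_H h\|^2$ around $\dim H/n$ provided $h$ is independent of $H$.

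For the \emph{left} singular vectors $v_i$ of $Y$, I fix a row index $k$ and apply the second half of Lemma \ref{le.2.0} with $A_{p-1,n}=Y^{(k)}$ and $l=\mathbf{y}_k=\mathbf{x}_k/\|\mathbf{x}_k\|$. Since distinct rows of $Y$ involve disjoint independent $x_{ij}$'s, $\mathbf{x}_k$ is independent of $Y^{(k)}$. Taking $I=[\sigma_i^2-\eta,\sigma_i^2+\eta]$ with $\eta$ just above the local-law threshold of Theorem \ref{th.2.1}, the local MP law applied to $W^{(k)}=Y^{(k)}Y^{(k)T}$ guarantees with overwhelming probability that $\dim H \ge cp\eta$, where $H\subset \mathbb{R}^n$ is spanned by the right singular vectors of $Y^{(k)}$ whose singular values lie in $I$. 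Two applications of Lemma \ref{le.2.5} to the independent $\mathbf{x}_k$ (one against $H$ and one against $\mathbb{R}^n$) yield $\|\pi_H \mathbf{y}_k\|^2 = \|\pi_H \mathbf{x}_k\|^2/\|\mathbf{x}_k\|^2 \asymp \dim H/n$, so the denominator produced by Lemma \ref{le.2.0} is at least $c\sigma_i^2 p/(n\eta)$. Plugging in $\eta\asymp K^2\log^7 p/p$ gives $|(v_i)_k|^2\le K^2\log^{O(1)}p/p$, and a union bound over $i,k$ delivers the required delocalization for $v_i$. Bulk singular values are covered directly by Theorem \ref{th.2.1}; the extreme edges are handled by choosing $I$ one-sided and invoking edge rigidity.

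The main obstacle is the \emph{right} singular vectors $u_i$. A direct application of the first half of Lemma \ref{le.2.0} would delete a column $j$ of $Y$, but the removed column $h_j = (x_{ij}/\|\mathbf{x}_i\|)_i$ is \emph{not} independent of the remaining minor, because every row normalization $\|\mathbf{x}_i\|$ contains $x_{ij}$. My plan is to decouple this dependence via a leave-one-out replacement. Let $\tilde{Y}$ be the auxiliary matrix with $(i,k)$ entry $x_{ik}/\|\mathbf{x}_i^{[j]}\|$, where $\|\mathbf{x}_i^{[j]}\|^2 := \|\mathbf{x}_i\|^2 - x_{ij}^2$; the minor of $\tilde{Y}$ obtained by removing column $j$ is genuinely independent of $(x_{1j},\dots,x_{pj})$. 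Using $(1+x_{ij}^2/\|\mathbf{x}_i^{[j]}\|^2)^{-1/2} = 1-x_{ij}^2/(2n)+O(x_{ij}^4/n^2)$ and the sub-exponential concentration of the row norms, this auxiliary minor differs from the true column-deleted minor of $Y$ by a diagonal left-rescaling of size $1+O(K^2/n)$. Weyl's inequality (Lemma \ref{le.2.2}) then shows the corresponding singular values and singular subspaces agree up to an error of order $K^2/n$, comfortably smaller than the scale $\eta$ on which the local law is invoked.

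Running the argument of the left case with the decoupled auxiliary minor in place of $Y^{(k)}$ provides the desired lower bound on the denominator in Lemma \ref{le.2.0}. The hypothesis $\mathbb{E}x_{ij}^3=0$ enters when transferring back to the original matrix: expanding the inner product $u_\ell \cdot h_j$ in powers of $x_{ij}/\sqrt{n}$, the leading stochastic correction from the row-normalization mismatch is a sum of terms proportional to $x_{ij}^3/n^{3/2}$, whose mean vanishes under the third-moment assumption. A standard second-moment estimate then bounds the correction by $K^{O(1)}\log^{O(1)}p$ times the main term, so the lower bound on the denominator survives. A final union bound over $i,j$ completes the proof of the right-singular-vector delocalization.
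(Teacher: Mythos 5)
Your overall architecture (component formula of Lemma \ref{le.2.0}, local MP law to count nearby singular values, concentration of $\|\pi_H h\|$, plus a decoupling of the removed column from the row normalizations) matches the paper's, and your auxiliary matrix with entries $x_{ik}/\|\mathbf{x}_i^{[j]}\|$ is exactly the paper's $\widehat{Y}_{(n)}$. But there are two genuine gaps. The first and most serious is the edge case. The argument ``take $I=[\lambda_i-\eta,\lambda_i+\eta]$, get $N_I\gtrsim p\eta$ eigenvalues, conclude the denominator is $\gtrsim N_I/(n\eta^2)$'' only works in the bulk, where the MP density is bounded below. Near $\lambda_\pm$ the density vanishes like $\sqrt{\kappa}$, so an interval of length $\eta$ at the edge contains only $\sim p\eta^{3/2}$ eigenvalues; optimizing over $\eta$ subject to $N_I\gtrsim K^2\log^2 n$ (needed for Lemma \ref{le.2.5} to give a lower bound) yields a denominator of order $p^{1/3}$ rather than the required $pK^{-C_0}\log^{-O(1)}p$, i.e. only $|x|\lesssim p^{-1/6}$. ``Choosing $I$ one-sided and invoking edge rigidity'' does not repair this. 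The paper's proof spends most of its length precisely here: it uses the second component (\ref{3.52}) of the eigenvector equation to derive the exact identity (\ref{3.49}), shows $\sum_j(\hat{\lambda}_j-\lambda_i)^{-1}|\hat{u}_j\cdot(\cdots)|^2=y-\lambda_i+o(1)$, evaluates the contribution of the $\hat{\lambda}_j$ far from $\lambda_i$ by a principal value integral against $\rho_{MP,y}$, and deduces by subtraction a lower bound on the near contribution, from which (\ref{3.53}) follows via Cauchy--Schwarz. Without some version of this argument your proof does not give the stated bound at the edge.

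The second gap is your claim that Weyl's inequality shows the ``singular subspaces'' of the true column-deleted minor and the decoupled minor agree up to $O(K^2/n)$. Weyl controls singular values only; since the bulk eigenvalue spacing is itself $O(1/n)$, a perturbation of operator norm $K^2/n$ can completely rotate individual singular vectors, and even the spectral projection onto an interval is only stable after a Davis--Kahan-type argument with a stated gap. The paper sidesteps this entirely: it never compares eigenvectors of the two minors, but instead carries the perturbations $\Delta_1=Y_{(n)}^TY_{(n)}-\widehat{Y}_{(n)}^T\widehat{Y}_{(n)}$ and $\Delta_2=Y_{(n)}^T-\widehat{Y}_{(n)}^T$ exactly through the eigenvector equation, arriving at the representation (\ref{3.58}) in which the vector being projected is $\widehat{Y}_{(n)}^Th_n+\Delta_2h_n+x^{-1}\Delta_1\mathbf{w}$. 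Note the factor $x^{-1}$: the size of the perturbation term depends on the very component $x$ you are trying to bound, and the paper handles this self-reference by assuming $|x|\geq n^{-1/2}K^{C_0/2}\log^{O(1)}n$ for contradiction, under which $\|x^{-1}\Delta_1\mathbf{w}\|$ is negligible against $\widehat{Y}_{(n)}^Th_n$. Your proposal does not register this circularity, and the moment expansion you invoke for ``transferring back'' (where $\mathbb{E}x_{ij}^3=0$ enters) corresponds in the paper to the much more limited task of replacing $\hat{v}_j\cdot h_n$ by $\hat{v}_j\cdot\hat{h}_n$ in Lemma \ref{le.3.5}, not to a comparison of spectral subspaces.
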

\begin{remark}\label{re.3.3.5}
Note that a little weaker delocalization property  for the left singular vector $v_i$ can also be found in Theorem 1.2 $(iv)$ of Pillai and Yin \cite{PY}.
\end{remark}

 Now if we denote
\begin{eqnarray*}
X=\left(
\begin{array}{cccc}
\frac{x_{11}}{\sqrt{n}} &\frac{x_{12}}{\sqrt{n}} &\cdots &\frac{x_{1n}}{\sqrt{n}}\\
\vdots &\vdots &\vdots &\vdots\\
\frac{x_{p1}}{\sqrt{n}} &\frac{x_{p2}}{\sqrt{n}} &\cdots &\frac{x_{pn}}{\sqrt{n}}
\end{array}
\right),
\end{eqnarray*}
then $S:=XX^T$ is the  sample covariance matrix corresponding to $W$. We further denote the ordered eigenvalues of $S$ by $0\leq\tilde{\lambda}_1\leq\cdots\leq\tilde{\lambda}_p$ and introduce the matrix
\begin{eqnarray*}
D=\left(
\begin{array}{cccc}
\frac{\sqrt{n}}{||\mathbf{x}_1||} &~ &~\\
~ &\ddots &~\\
~ &~ &\frac{n}{||\mathbf{x}_p||}
\end{array}
\right).
\end{eqnarray*}
By Theorem $5.9$ of \cite{BS}, we have $\tilde{\lambda}_p=b+o(1)$ holds with overwhelming probability. In fact, it is easy to see $\tilde{\lambda}_1=a+o(1)$ holds with overwhelming probability as well by a similar discussion through moment method. Observe that $W=DSD$, and $||D-I||_{op}=o(1)$ holds with overwhelming probability. By Lemma \ref{le.2.2}, we also have
\begin{eqnarray}
\lambda_1=a+o(1),~~~\lambda_p=b+o(1) \label{2.4}
\end{eqnarray}
holds with overwhelming probability. So below we always assume $\lambda_i\in(a/2,2b),1\leq i\leq p$.

The proof of Theorem \ref{th.2.1} is partially based on the lemmas of Section 2. It turns out to be quite similar to the case of sample covariance matrices and Wigner matrices, see \cite{ESY}, \cite{ESYY}, \cite{TV} and \cite{WK}. However, the delocalization of the right singular vector $u_i$ of $Y$ is an obstacle, owing to the lack of independence between the columns of $Y$.

For the convenience of the reader, we provide a short proof of Theorem \ref{th.2.1} at first.  Our main task in this section is the proof of Theorem \ref{th.2.2}, more precisely, the right singular vector part of the theorem.
\begin{proof}[Proof of Theorem \ref{th.2.1}]
We  provide the following crude upper bound on $N_I$ at first.
\begin{lemma}\label{le.2.4}Under the assumptions of Theorem \ref{th.2.1}, we have for any interval $I\subset\mathbb{R}$ with $|I|\gg K^2\log ^2p/p$, and large enough $C>0$
\begin{eqnarray*}
N_I\leq Cp|I|
\end{eqnarray*}
with overwhelming probability.
\end{lemma}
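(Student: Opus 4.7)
The plan is to bound $N_I$ via the Stieltjes transform at the scale $\eta := |I|/2$. Let $E$ denote the midpoint of $I$ and $z := E + i\eta$. From
\[
\mathrm{Im}\, s_p(z) = \frac{1}{p}\sum_{i=1}^p \frac{\eta}{(\lambda_i - E)^2 + \eta^2} \geq \frac{N_I}{2p\eta},
\]
it suffices to prove $\mathrm{Im}\, s_p(z) = O(1)$ with overwhelming probability, uniformly in $E\in[a/2,2b]$; uniformity in $E$ then follows from a polynomially fine $\eta$-net together with the Lipschitz continuity of $E\mapsto s_p(E+i\eta)$ (with constant $O(1/\eta^2)$).

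To control $\mathrm{Im}\, s_p(z)$, apply Lemma \ref{le.1.3} to write
\[
s_p(z) = \frac{1}{p}\sum_{k=1}^p G_{kk}, \quad G_{kk} = \frac{1}{D_k}, \quad D_k := 1 - z - Q_k,
\]
with $Q_k := \mathbf{y}_k^T Y^{(k)T}Y^{(k)}(Y^{(k)T}Y^{(k)} - z)^{-1}\mathbf{y}_k$. Since $|\mathrm{Im}\, G_{kk}|\leq 1/|D_k|$, it suffices to show that $|D_k|\geq c$ with overwhelming probability for some positive constant $c=c(y)$. Condition on $Y^{(k)}$, which is independent of $\mathbf{x}_k$; writing $\mathbf{y}_k = \mathbf{x}_k/\|\mathbf{x}_k\|$ and $A_k := Y^{(k)T}Y^{(k)}(Y^{(k)T}Y^{(k)} - z)^{-1}$, Lemma \ref{le.2.5} gives $\|\mathbf{x}_k\|^2 = n(1+o(1))$ with overwhelming probability, and a standard concentration bound for the quadratic form $\mathbf{x}_k^T A_k \mathbf{x}_k$ (a routine extension of Lemma \ref{le.2.5}) yields
\[
\mathbf{x}_k^T A_k \mathbf{x}_k = \mathrm{Tr}\, A_k + O\!\left(K\sqrt{\mathrm{Tr}(A_k A_k^*)}\,\log^{O(1)} p\right)
\]
with overwhelming probability. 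Since $Y^{(k)T}Y^{(k)}$ shares its nonzero spectrum with $W^{(k)} = Y^{(k)}Y^{(k)T}$, a direct computation gives $\mathrm{Tr}\, A_k = (p-1) + (p-1)z\, s_{p-1}^{(k)}(z)$, where $s_{p-1}^{(k)}$ is the Stieltjes transform of $W^{(k)}$; dividing by $\|\mathbf{x}_k\|^2$ then gives
\[
Q_k = \frac{p-1}{n} + \frac{(p-1)z}{n}\, s_{p-1}^{(k)}(z) + \mathrm{err}.
\]

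In the limit, the deterministic part on the right tends to $1 - z - 1/s(z)$ by the self-consistent equation (\ref{2.10}), so $D_k \to 1/s(z)$. The explicit formula (\ref{2.10.0}) shows that $|s(z)|$ is bounded above, and bounded below, by positive constants uniformly for $z$ in a compact neighborhood of $[a,b]$; hence $|D_k|\geq c$ with overwhelming probability, which completes the argument.

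The main obstacle is the self-referential nature of the concentration: a direct estimate yields $\mathrm{Tr}(A_k A_k^*)\leq (2b)^2 (p-1)\,\mathrm{Im}\, s_{p-1}^{(k)}(z)/\eta$, so the error ``$\mathrm{err}$'' is of order $K\sqrt{\mathrm{Im}\, s_{p-1}^{(k)}(z)/(p\eta)}\,\log^{O(1)} p$, which is $o(1)$ only once one already knows $\mathrm{Im}\, s_{p-1}^{(k)}(z) = O(1)$ at scale $\eta$. I would resolve this by a dyadic bootstrap on scales $\eta_\ell = 2^{-\ell}\eta_0$: the claim is trivial at the initial scale $\eta_0 = O(1)$, and at each step Cauchy's interlacing law (which relates $s_p$ and $s_{p-1}^{(k)}$ with an error of order $1/(p\eta)$) together with the monotonicity of $\eta\mapsto \eta\,\mathrm{Im}\, s_p(E + i\eta)$ propagates the estimate down one scale. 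The hypothesis $|I|\gg K^2\log^2 p/p$ is precisely the threshold at which this error remains negligible at the final scale, and below which the argument breaks down.
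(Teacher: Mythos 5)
Your strategy is workable in outline --- it is the standard continuity/bootstrap proof of a weak local law --- but it is a genuinely different and much heavier route than the paper's, and its key concentration step is where the argument is fragile. The paper proves this lemma by a one-scale contradiction that never touches the self-consistent equation: starting from \eqref{3.05} one keeps in the denominator only the eigenvalues $\lambda_\alpha^{(1)}$ with $|\lambda_\alpha^{(1)}-E|\leq\eta$, obtaining $\Im G_{kk}\leq C_1 p\eta/\sum_{\alpha:\lambda_\alpha^{(1)}\in I}\xi_\alpha$. On the event $N_I\geq Cp\eta$, Cauchy interlacing forces the eigenspace $H$ of $\mathcal{W}^{(1)}$ attached to $I$ to have dimension $d\geq Cp\eta-1\gg K^2\log^2 p$, so Lemma \ref{le.2.5} (a projection bound only) gives $\sum_\alpha\tilde\xi_\alpha=\|\pi_H(\mathbf{x}_1)\|^2\geq\frac12 Cp\eta$ with overwhelming probability; combined with $N_I\leq C_2\eta\sum_k\Im G_{kk}$ this yields $N_I\leq (2C_1C_2/C)\,p\eta$, contradicting $N_I\geq Cp\eta$ once $C$ is large. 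No bootstrap, no quadratic-form concentration, and the threshold $|I|\gg K^2\log^2 p/p$ comes out exactly from Lemma \ref{le.2.5}.

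The weak point in your version is the ``routine extension of Lemma \ref{le.2.5}'' to the full quadratic form $\mathbf{x}_k^TA_k\mathbf{x}_k$. The paper's own extension of the projection lemma to such forms (in Lemma \ref{le.3.5.5}, following Proposition 4.6 of \cite{WK}) proceeds by decomposing $A_k$ into spectral windows and \emph{uses the crude bound $N_I\leq Cp|I|$ as input} to count eigenvalues per window --- so invoking it here would be circular. You would instead have to import a genuine Hanson--Wright inequality for independent bounded entries; that is legitimate, but for variables merely bounded by $K$ it typically yields an error $K^2\|A_k\|_F\log^{O(1)}p$ rather than $K\|A_k\|_F\log^{O(1)}p$, degrading the admissible scale to $p\eta\gg K^4\log^{O(1)}p$ and failing to recover the stated threshold. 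The remaining ingredients of your bootstrap (monotonicity of $\eta\mapsto\eta\,\Im s_p$, interlacing to pass between $s_p$ and $s_{p-1}^{(k)}$, and stability of the quadratic self-consistent equation to convert the a priori bound $\Im s_p\leq 2M$ into an absolute bound at the new scale) are sound, but they amount to reproving Theorem \ref{th.2.1} in order to obtain its own preliminary input; the paper's interlacing-plus-projection argument is the intended shortcut.
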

\begin{proof} Firstly we introduce the notation
\begin{eqnarray*}
W^{(k)}=Y^{(k)}Y^{(k)T},~~~~\mathcal{W}^{(k)}=Y^{(k)^T}Y^{(k)},~~~~G^{(k)}=(W^{(k)}-z)^{-1},~~~~\mathcal{G}^{(k)}=(\mathcal{W}^{(k)}-z)^{-1}.
\end{eqnarray*}
Let $\lambda_\alpha^{(1)}, \alpha=1,\cdots,p-1$ denote the eigenvalues of the $(p-1)\times(p-1)$ matrix $W^{(1)}$. Thus $\lambda_\alpha^{(1)}, \alpha=1,\cdots, p-1$ are also the eigenvalues of the $n\times n$ matrix $\mathcal{W}^{(1)}$, whose other eigenvalues are all zeros. We further use $\nu_\alpha$ to denote  the eigenvector of $\mathcal{W}^{(1)}$ corresponding to the eigenvalue $\lambda_\alpha^{(1)}$, and introduce the quantity
\begin{eqnarray}
\xi_\alpha=n|\mathbf{y}_1\cdot \nu_\alpha|^2=\frac{n}{||\mathbf{x}_1||^2}|\mathbf{x}_1\cdot \nu_\alpha|^2=:\frac{n}{||\mathbf{x}_1||^2}\tilde{\xi}_\alpha.\label{3.01}
\end{eqnarray}
We can rewrite (\ref{2.1}) as
\begin{eqnarray}
G_{11}=\frac{1}{1-z-\frac{1}{n}\sum_{\alpha=1}^{p-1}\frac{\lambda_\alpha^{(1)}\xi_\alpha}{\lambda_\alpha^{(1)}-z}}.\label{3.05}
\end{eqnarray}

By Cauchy's interlacing law, we also have $\lambda_\alpha^{(1)}\in[a/2,2b]$ with overwhelming probability. Then for any $z=E+i\eta$ such that $E\in [a/2,2b]$, we have
\begin{eqnarray}
\left|\Im G_{kk}\right|\leq\frac{1}{\eta+\frac{\eta}{n}\sum_{\alpha=1}^{p-1}\frac{\lambda_\alpha^{(1)}\xi_\alpha}{(\lambda_\alpha^{(1)}-E)^2+\eta^2}}
\leq \frac{C_1p\eta}{\sum_{\alpha:|\lambda_\alpha^{(1)}-E|\leq\eta}\xi_\alpha}\label{3.36}
\end{eqnarray}
 for any $k\in\{1,\cdots,p\}$. Now we set $I=[E-\eta/2,E+\eta/2]$. Notice that there always exists  some positive constant $C_2$ such that
\begin{eqnarray}
N_I\leq C_2p\eta\Im s_p(z)=C_2\eta\sum_{k=1}^p \Im G_{kk}.\label{3.36.5}
\end{eqnarray}
If we set $C_3=C_1C_2$, it follows from (\ref{3.36}) and (\ref{3.36.5}) that
\begin{eqnarray}
&&\mathbb{P}(N_I\geq Cp\eta)\nonumber\\
&=&\mathbb{P}\bigg{(}\sum_{k=1}^p{\Im}G_{kk}\geq C_2^{-1}Cp~{\rm{and}}~ N_I\geq Cp\eta\bigg{)}\nonumber\\
&\leq&p\mathbb{P}\bigg{(}\sum_{\alpha:|\lambda_\alpha^{(1)}-E|\leq \eta/2}\xi_\alpha\leq C_3C^{-1}p\eta ~{\rm{and}}~ N_I\geq Cp\eta\bigg{)}\nonumber\\
&\leq&p\mathbb{P}\bigg{(}\frac{n}{||\mathbf{x}_1||^2}\sum_{\alpha:|\lambda_\alpha^{(1)}-E|\leq \eta/2}\tilde{\xi}_\alpha\leq C_3C^{-1}p\eta ~{\rm{and}}~ N_I\geq Cp\eta\bigg{)}\nonumber\\
&\leq& p\mathbb{P}(||\mathbf{x}_1||^2\geq 2n)+p\mathbb{P}\bigg{(}\sum_{\alpha:|\lambda_\alpha^{(1)}-E|\leq \eta/2}\tilde{\xi}_\alpha\leq 2C_3C^{-1}p\eta ~{\rm{and}}~ N_I\geq Cp\eta\bigg{)}.
 \label{3.7}
\end{eqnarray}
The first term of (\ref{3.7}) is obviously exponential small by the Hoeffding inequality. For the second term, we use Lemma \ref{le.2.5}. Now we specialize $\mathcal{X}$ in Lemma \ref{le.2.5} to be $\mathbf{x}_1$ and the subspace $H$ to be the one generated by eigenvectors $\{\nu_\alpha: \lambda_\alpha^{(1)}\in I\}$. Thus one has $$d=N_I\geq Cp\eta\gg CK^2\log^2n.$$
Then by Lemma \ref{le.2.5} we have
\begin{eqnarray*}
\sum_{\alpha:|\lambda_\alpha^{(1)}-E|\leq \eta/2}\tilde{\xi}_\alpha=||\pi_H(\mathcal{X})||^2>\frac12Cp\eta
\end{eqnarray*}
 with overwhelming probability. This implies that the second term of (\ref{3.7}) is exponential small when $C$ is large enough. So we conclude the proof of Lemma \ref{le.2.4}.
\end{proof}

Now we proceed to prove Theorem \ref{th.2.1}. The basic strategy is to compare $s_p(z)$ and $s(z)$ with small imaginary part $\eta$. In fact, we have the following proposition.
\begin{prop} \label{pr.3.4}Let $1/10\geq\eta\geq\frac1n$, and $L_1, L_2,\epsilon,\delta>0$. Suppose that one has the bound
\begin{eqnarray*}
|s_p(z)-s(z)|\leq\delta
\end{eqnarray*}
with (uniformly) overwhelming probability for all $z=E+i\eta$ such that $E\in[L_1,L_2]$ and ${\Im z}\geq\eta$. Then for any interval $I\subset[L_1+\epsilon,L_2-\epsilon ]$ with $|I|\geq \max (2\eta,\frac{\eta}{\delta}\log\frac{1}{\delta})$, one has
\begin{eqnarray*}
|N_I-n\int_I\rho_{MP,y}(x)dx|\leq \delta n|I|
\end{eqnarray*}
with overwhelming probability.
\end{prop}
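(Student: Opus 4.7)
The plan is to upgrade the Stieltjes transform control to an eigenvalue counting estimate via the Helffer--Sj\"{o}strand/Poisson smoothing strategy used for Wigner and sample covariance matrices in \cite{ESY1}, \cite{ESY}, \cite{ESYY}, \cite{TV} and \cite{WK}. The starting identity
\begin{eqnarray*}
\Im s_p(E+i\eta)=\pi(P_\eta *dF_p)(E),\qquad P_\eta(x):=\frac{\eta}{\pi(x^2+\eta^2)},
\end{eqnarray*}
together with its analogue for $s(z)$ and $F_{MP,y}$, converts the hypothesis into averaged control of $F_p-F_{MP,y}$ tested against Poisson-smoothed functions. First I would construct a smooth cutoff $\chi$ equal to $1$ on $I$ and vanishing outside a slight enlargement inside $[L_1,L_2]$ (possible thanks to the $\epsilon$-buffer), with transition scale $\ell$ to be chosen; Fubini and the hypothesis give, with overwhelming probability after a standard Lipschitz-net argument in $E$ using $|\partial_E s_p(E+i\eta)|\leq\eta^{-2}$,
\begin{eqnarray*}
\left|\int(\chi *P_\eta)(x) d(F_p-F_{MP,y})(x)\right|=\frac{1}{\pi}\left|\int\chi(E)\Im[s_p(E+i\eta)-s(E+i\eta)] dE\right|\leq\delta\|\chi\|_{L^1}=O(\delta(|I|+\ell)).
\end{eqnarray*}

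Next I would compare $\int(\chi *P_\eta) dF_p$ with $p^{-1}N_I$, and similarly on the $F_{MP,y}$ side. Building $\chi$ via the almost-analytic Helffer--Sj\"{o}strand extension so that its Poisson regularization at scale $\eta$ differs from $\chi$ by $O(\delta)$ away from an $\ell$-neighborhood of $\partial I$ and by $O(1)$ inside it, the crude bound $N_J\leq Cp|J|$ from Lemma \ref{le.2.4} controls the number of eigenvalues in this $\ell$-neighborhood by $Cp\ell$. Combining this with the boundedness of $\rho_{MP,y}$ on $[L_1,L_2]$ produces
\begin{eqnarray*}
\left|N_I-p\int_I\rho_{MP,y}(x) dx\right|\leq C\delta p|I|+Cp\ell,
\end{eqnarray*}
which, since $p\leq n$, implies the sought bound in the statement provided the right-hand side is $O(\delta n|I|)$.

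The main obstacle, and the source of the precise length hypothesis, is the calibration of $\ell$: it must be large enough, $\ell\gtrsim\eta\log(1/\delta)$, so that the Helffer--Sj\"{o}strand regularization at scale $\eta$ introduces only $O(\delta)$ pointwise error (the logarithm being the price of upgrading the algebraic Poisson tail to exponential decay via the almost-analytic construction), yet small enough, $\ell\lesssim\delta|I|$, so that the boundary contribution $Cp\ell$ is absorbed into $C\delta p|I|$. Both constraints are compatible precisely when $|I|\geq\frac{\eta}{\delta}\log\frac{1}{\delta}$, which is the hypothesis; taking $\ell:=\eta\log(1/\delta)$ then closes the argument.
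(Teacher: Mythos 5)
Your architecture is the right one, and it is essentially the argument the paper itself outsources: the paper offers no proof of Proposition \ref{pr.3.4}, only the references to Lemma 29 of \cite{TV}, its edge extension in \cite{WK}, and Lemma 64 of \cite{TV1}, all of which run exactly this scheme --- smooth the indicator of $I$ at scale $\eta$, test the hypothesis $|s_p-s|\leq\delta$ against it, control the boundary layer of width $\ell$ around $\partial I$ by a crude eigenvalue count, and use the $\epsilon$-buffer to stay inside $[L_1,L_2]$. One simplification worth noting: you do not need to import Lemma \ref{le.2.4} (which carries the matrix-model assumptions of Theorem \ref{th.2.1}), because the crude bound $N_J\leq Cp|J|$ for any $|J|\geq 2\eta$ already follows from the hypothesis of the proposition itself via $N_J\leq Cp|J|\,\Im s_p(E_J+i|J|/2)\leq Cp|J|\,(\Im s(E_J+i|J|/2)+\delta)$, $E_J$ being the center of $J$; this is what keeps the Tao--Vu lemma self-contained. (The residual constant $C$ in $C\delta p|I|$ versus the bare $\delta n|I|$ of the statement, and the $p$ versus $n$ discrepancy, are absorbed by rescaling $\delta$, as in the cited sources.)

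One step is justified incorrectly as written. You claim the Poisson regularization of $\chi$ at scale $\eta$ differs from $\chi$ by $O(\delta)$ at distance $\ell=\eta\log(1/\delta)$ from the transition region. For the Poisson kernel the pointwise error at distance $\ell$ is $O(\eta/\ell)=O(1/\log(1/\delta))$, which is nowhere near $\delta$, and invoking an almost-analytic extension does not by itself change this, since your hypothesis controls $s_p$ only on the half-strip $\Im z\geq\eta$. The correct accounting is integrated, not pointwise: decompose the eigenvalues by dyadic distance $2^k\ell$ from $\partial I$, bound their number on each annulus by $Cp\,2^k\ell$ via the crude bound above, and bound the Poisson tail there by $O(\eta/(2^k\ell))$, respectively $O(\eta|I|/(2^k\ell)^2)$ once $2^k\ell\geq|I|$. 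The sum is $O(p\eta\log(|I|/\eta))=O(p\eta\log(1/\delta))$, and it is this \emph{total} tail contribution --- not a pointwise error --- that the hypothesis $|I|\geq\frac{\eta}{\delta}\log\frac{1}{\delta}$ is calibrated to absorb into $\delta p|I|$. (A genuine Helffer--Sj\"{o}strand version is also viable here because the hypothesis holds for all $\Im z\geq\eta$ and the region $\Im z<\eta$ can be handled by the monotonicity of $y\mapsto y\,\Im s_p(E+iy)$; but then the $\log(1/\delta)$ reappears as $\int_\eta^{\ell}dy/y$ rather than as pointwise smallness.) With that repair, together with your boundary-layer term $Cp\ell\leq C\delta p|I|$, the proof closes and reproduces Lemma 64 of \cite{TV1}.
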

\begin{remark}
Proposition \ref{pr.3.4} is an extension of Lemma 29 of \cite{TV} up to the edge, whose proof can be found in \cite{WK}. In fact, the proof can be taken in the same manner as that of Lemma 64 in \cite{TV1} for the Wigner matrix.
\end{remark}

So in view of Proposition \ref{pr.3.4}, to prove Theorem \ref{th.2.1}, we only need to prove that
the bound
\begin{eqnarray}
|s_p(z)-s(z)|\leq\delta \label{3.36.6}
\end{eqnarray}
holds with (uniformly) overwhelming probability for all $z=E+i\eta$ such that $E\in[a/2-\epsilon,2b+\epsilon]$ and $1/10\geq\eta\geq \frac{K^2\log^6n}{n\delta^8}$.
To prove (\ref{3.36.6}) we need to derive a consistent equation for $s_p(z)$, which is similar to the equation (\ref{2.10}) for $s(z)$.

Firstly by Lemma \ref{le.1.3} we can rewrite $s_p(z)$ as
\begin{eqnarray*}
s_p(z)=\frac1p\sum_{k=1}^{p}\frac{1}{1-z-d_k},
\end{eqnarray*}
with
\begin{eqnarray*}
d_k=\mathbf{y}_k^T\mathcal{W}^{(k)}\mathcal{G}^{(k)}\mathbf{y}_k.
\end{eqnarray*}

Then the proof of (\ref{3.36.6}) can be taken in the same manner as the counterpart of the sample covariance matrix case (see the proof of formula (4.12) of \cite{WK}). We only state the different parts below and leave the details to the reader. We remark here that we consider the domain $[L_1,L_2]=[a/2-\epsilon,2b+\epsilon]$ rather than $[a,b]$ in \cite{WK}. However, if one goes through the proof in \cite{WK}, it is not difficult to see that the proof towards any domain $[L_1,L_2]$ containing $[a,b]$ is the same. The only minor difference between our case and the sample covariance matrix in \cite{WK} is the estimation of $d_k$.
We will only deal with $d_1$ in the sequel. The others are analogous. By (\ref{3.01}) and (\ref{3.05}), we have
\begin{eqnarray}
d_1=\frac1n\sum_{\alpha=1}^{p-1}\frac{\lambda_\alpha^{(1)}\xi_\alpha}{\lambda_\alpha^{(1)}-z}
=\frac1n\sum_{\alpha=1}^{p-1}\frac{\lambda_\alpha^{(1)}}{\lambda_\alpha^{(1)}-z}+\frac1n
\sum_{\alpha=1}^{p-1}\frac{\lambda_\alpha^{(1)}(\xi_\alpha-1)}{\lambda_\alpha^{(1)}-z}.\label{3.10.1}
\end{eqnarray}
For the first term of (\ref{3.10.1}) we have
\begin{eqnarray*}
\frac1n\sum_{\alpha=1}^{p-1}\frac{\lambda_\alpha^{(1)}}{\lambda_\alpha^{(1)}-z}=\frac{p-1}{n}+\frac{z}{n}\sum_{j=1}^{p-1}\frac{1}{\lambda_\alpha^{(1)}-z}
:=\frac{p-1}{n}(1+zs^{(1)}_p(z)),
\end{eqnarray*}
where
\begin{eqnarray*}
s^{(1)}_p(z)=\frac{1}{p-1}\sum_{j=1}^{p-1}\frac{1}{\lambda_\alpha^{(1)}-z}
\end{eqnarray*}
is the Stieltjes transform of the ESD of $W^{(1)}$. Then by the Cauchy's interlacing property, we have
\begin{eqnarray*}
|s_p(z)-(1-\frac1p)s^{(1)}_p(z)|=O(\frac1p\int_{\mathbb{R}}\frac{1}{|x-z|^2}dx)=O(\frac{1}{p\eta}).
\end{eqnarray*}
Consequently one has
\begin{eqnarray}
\frac1n\sum_{\alpha=1}^{p-1}\frac{\lambda_\alpha^{(1)}}{\lambda_\alpha^{(1)}-z}=\frac{p-1}{n}+z\frac{p}{n}s_p(z)+o(\delta^2). \label{3.10.11}
\end{eqnarray}
Now we provide the following lemma on the second term of (\ref{3.10.1}).
\begin{lemma} \label{le.3.5.5} For all $z=E+i\eta$ with $E\in[a/2-\epsilon,2b+\epsilon]$ and $\eta\geq \frac{K^2\log^6n}{n\delta^8}$,
\begin{eqnarray*}
\frac1n\sum_{\alpha=1}^{p-1}\frac{\lambda_\alpha^{(1)}(\xi_\alpha-1)}{\lambda_\alpha^{(1)}-z}=o(\delta^2)
\end{eqnarray*}
uniformly in $z$ with overwhelming probability.
\end{lemma}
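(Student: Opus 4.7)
The plan is to exploit the independence of $\mathbf{x}_1$ from $Y^{(1)}$: conditional on $Y^{(1)}$ (hence on the spectral data $\{\lambda_\alpha^{(1)},\nu_\alpha\}$), the sum in question becomes a centred quadratic form in the bounded independent coordinates of $\mathbf{x}_1$, to which a Hanson--Wright type concentration inequality applies. Writing $t:=n/\|\mathbf{x}_1\|^2$ and using (\ref{3.01}), one has $\xi_\alpha-1=t(\tilde{\xi}_\alpha-1)+(t-1)$. Hoeffding's inequality gives $|t-1|=O(K\log n/\sqrt{n})$ with overwhelming probability, and a trivial estimate on $\tfrac{1}{n}\bigl|\sum_\alpha \lambda_\alpha^{(1)}/(\lambda_\alpha^{(1)}-z)\bigr|$ (using $\lambda_\alpha^{(1)}=O(1)$ and the crude bound $|s_p^{(1)}(z)|=O(\log(1/\eta))$, valid via Cauchy's interlacing and Lemma \ref{le.2.4}) reduces the $(t-1)$-contribution to a term of size $o(\delta^2)$ under our hypothesis on $\eta$. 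It therefore suffices to handle
\[
S(z):=\frac{1}{n}\sum_{\alpha=1}^{p-1}\frac{\lambda_\alpha^{(1)}(\tilde{\xi}_\alpha-1)}{\lambda_\alpha^{(1)}-z}.
\]

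Setting $M:=\sum_\alpha \frac{\lambda_\alpha^{(1)}}{\lambda_\alpha^{(1)}-z}\nu_\alpha\nu_\alpha^T = \mathcal{W}^{(1)}\mathcal{G}^{(1)}$ and using $\tilde{\xi}_\alpha=\mathbf{x}_1^T\nu_\alpha\nu_\alpha^T\mathbf{x}_1$, one obtains
\[
nS(z)=\mathbf{x}_1^T M \mathbf{x}_1-\mathrm{tr}(M),
\]
which is conditionally centred since $\mathbb{E}[\mathbf{x}_1\mathbf{x}_1^T\mid Y^{(1)}]=I_n$. Splitting $M$ into its real and imaginary parts (both real symmetric, with each having Frobenius and operator norm bounded by those of $M$ up to constants) and invoking Hanson--Wright for independent entries bounded by $K$ (or, equivalently, a martingale/Azuma argument on the filtration generated by $x_{1,1},\ldots,x_{1,j}$) yields, with overwhelming probability,
\[
|nS(z)|\leq CK^2\|M\|_F\sqrt{\log n}+CK^2\|M\|_{op}\log n.
\]
The operator norm satisfies $\|M\|_{op}\leq C/\eta$, while the Frobenius norm $\|M\|_F^2=\sum_\alpha (\lambda_\alpha^{(1)})^2/|\lambda_\alpha^{(1)}-z|^2$ is controlled by tiling the spectral interval $[a/2,2b]$ by windows of length $\eta$ and applying Lemma \ref{le.2.4} (which transfers from $W$ to $W^{(1)}$ at no cost via Cauchy's interlacing), giving $\|M\|_F^2\leq Cp/\eta$ up to harmless polylog factors.

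Substituting these back produces $|S(z)|\leq CK^2\sqrt{\log n/(n\eta)}+CK^2\log n/(n\eta)$, which is $o(\delta^2)$ under the hypothesis $\eta\geq K^2\log^6 n/(n\delta^8)$ once the truncation level $K$ has been chosen logarithmically (as in Remark 1.2). To upgrade the pointwise-in-$z$ estimate to uniformity over the rectangle $\{E+i\eta:E\in[a/2-\epsilon,2b+\epsilon],\eta\in[K^2\log^6 n/(n\delta^8),1/10]\}$, I would apply the pointwise bound on an $n^{-10}$-grid and interpolate using the evident $n^{O(1)}$-Lipschitz dependence of $S(z)$ on $z$. The principal technical obstacle is the quadratic-form concentration step together with the Frobenius-norm bound: the hypothesis $\eta\geq K^2\log^6 n/(n\delta^8)$ is calibrated precisely to the Gaussian-regime Hanson--Wright tail and the crude local-law threshold $|I|\gtrsim K^2\log^2 p/p$ of Lemma \ref{le.2.4}, so that both operate simultaneously with margin to spare, leaving no room for wasteful bounding in either step.
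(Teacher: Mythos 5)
Your reduction to a centred quadratic form is sound: the splitting $\xi_\alpha-1=t(\tilde\xi_\alpha-1)+(t-1)$, the identity $nS(z)=\mathbf{x}_1^TM\mathbf{x}_1-\mathrm{tr}\,M$ with $M=\mathcal{W}^{(1)}\mathcal{G}^{(1)}$, the bounds $\|M\|_{op}\leq C/\eta$ and $\|M\|_F^2\leq Cp/\eta$ via Lemma \ref{le.2.4}, and the grid-plus-Lipschitz upgrade to uniformity are all fine. This is a genuinely different route from the paper, which never forms the full quadratic form: it instead groups the eigenvalues $\lambda_\alpha^{(1)}$ into dyadic windows around $E$, applies the projection-norm concentration of Lemma \ref{le.2.5} to each group $T$ (yielding $\sum_{j\in T}|\mathbf{x}_1\cdot\nu_j|^2=|T|+O(\sqrt{|T|}K\log n\vee K^2\log^2 n)$), handles the normalization $n/\|\mathbf{x}_1\|^2$ groupwise, and then sums over windows as in Proposition 4.6 of \cite{WK}.

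The gap is quantitative but real: the vanilla Hanson--Wright inequality for entries of sub-Gaussian norm $K$ has Gaussian-regime denominator $K^4\|M\|_F^2$, so the fluctuation it delivers with overwhelming probability is of order $K^2\|M\|_F\log n$ (your $\sqrt{\log n}$ only gives high, not overwhelming, probability, but that is minor). This is a full factor of $K$ worse than what the hypothesis $\eta\geq K^2\log^6 n/(n\delta^8)$ is calibrated for. Concretely, at the threshold $n\eta=K^2\log^6 n/\delta^8$ your leading term is $K^2\log n/\sqrt{n\eta}=K\delta^4/\log^2 n$, which is $o(\delta^2)$ only if $K=o(\delta^{-2}\log^2 n)$; the lemma must hold for all $K=o(p^{1/C_0}\delta^2\log^{-1}p)$, and even in the paper's application $K=\log^{C}n$ with $C$ determined by the arbitrary tail exponent in Condition $\mathbf{C}_1$, so $C$ may well exceed $2$. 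The paper's route gives fluctuation linear in $K$ (the expansion $(\sqrt{|T|}+O(K\log n))^2$ produces $\sqrt{|T|}K\log n$, not $\sqrt{|T|}K^2\log n$), and then the $K$'s cancel exactly against $\sqrt{n\eta}\geq K\log^3 n/\delta^4$. To repair your argument, replace vanilla Hanson--Wright by a Bernstein-type large-deviation estimate for quadratic forms in independent \emph{bounded} variables that uses true variances rather than sub-Gaussian norms: the off-diagonal part $\sum_{i\neq j}M_{ij}x_{1i}x_{1j}$ has variance $O(\|M\|_F^2)$ with no $K$, and the diagonal part $\sum_i M_{ii}(x_{1i}^2-1)$ has variance at most $K^2\sum_i M_{ii}^2$, i.e., standard deviation $K\|M\|_F$; this restores the single power of $K$ and closes the argument. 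Your closing claim that the hypothesis is calibrated to the Hanson--Wright tail ``with margin to spare'' is therefore exactly backwards: the margin is what the extra factor of $K$ consumes.
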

\begin{proof}
We set $R_j=(\xi_j-1)$. By (\ref{3.01}) and the fact that
\begin{eqnarray}
\frac{n}{||\mathbf{x}_1||^2}=1+O(\frac{K^2\log^2n}{\sqrt{n}})\label{3.11.1}
\end{eqnarray}
holds with overwhelming probability, we have for any $T\subset\{1,\cdots,p-1\}$
\begin{eqnarray}
\sum_{j\in T}R_j=\frac{n}{||\mathbf{x}_1||^2}\sum_{j\in T}|\mathbf{x}_1\cdot \nu_j|^2-|T|.\label{3.11.2}
\end{eqnarray}
By using Lemma \ref{le.2.5}, we have
\begin{eqnarray}
\sum_{j\in T}|\mathbf{x}_1\cdot \nu_j|^2=T+O\left(\sqrt{T}K\log n\vee K^2\log^2n\right),\label{3.12.1}
\end{eqnarray}
where $a\vee b=\max(a,b)$. By inserting (\ref{3.11.1}) and (\ref{3.12.1}) into (\ref{3.11.2}), we have
\begin{eqnarray*}
\sum_{j\in T}R_j=\sum_{j\in T}|\mathbf{x}_1\cdot \nu_j|^2-|T|+O\left(\frac{TK^{4}\log^{4} n}{\sqrt{n}}\right).
\end{eqnarray*}
If we choose $T=\log^{O(1)}n$, we always have
\begin{eqnarray*}
\sum_{j\in T}R_j=\sum_{j\in T}|\mathbf{x}_1\cdot \nu_j|^2-|T|+o(\delta^2).
\end{eqnarray*}
Then the following part of the proof is the same as that in the sample covariance matrix case. One can refer to the proof of Proposition 4.6 of \cite{WK} for details.
\end{proof}
Now we proceed to the proof of Theorem \ref{th.2.1}. By (\ref{3.10.1}), (\ref{3.10.11}) and Lemma \ref{le.3.5.5} we can get the following equation
\begin{eqnarray}
s_p(z)+\frac{1}{\frac pn+z-1+z\frac pn s_p(z)+o(\delta^2)}=0. \label{3.10.12}
\end{eqnarray}
By a standard comparison of (\ref{3.10.12}) and (\ref{2.10}) (see \cite{WK} for example), we have (\ref{3.36.6}). Thus by Proposition \ref{pr.3.4} we conclude the proof of Theorem \ref{th.2.1}.
\end{proof}

Now we turn to the proof of Theorem \ref{th.2.2}. At first, we introduce the matrix $\widehat{W}_{(n)}:=\widehat{Y}_{(n)}\widehat{Y}_{(n)}^T$ with
\begin{eqnarray*}
\widehat{Y}_{(n)}=\left(
\begin{array}{ccccc}
\frac{x_{11}}{||\widehat{\mathbf{x}}_1||} &\frac{x_{12}}{||\widehat{\mathbf{x}}_1||} &\cdots &\frac{x_{1,n-1}}{||\widehat{\mathbf{x}}_1||}\\
\frac{x_{21}}{||\widehat{\mathbf{x}}_2||} &\frac{x_{22}}{||\widehat{\mathbf{x}}_2||} &\cdots &\frac{x_{2,n-1}}{||\widehat{\mathbf{x}}_2||}\\
\cdots &\cdots &\cdots &\cdots\\
\frac{x_{p1}}{||\widehat{\mathbf{x}}_p||} &\frac{x_{p2}}{||\widehat{\mathbf{x}}_p||} &\cdots &\frac{x_{p,n-1}}{||\widehat{\mathbf{x}}_p||}
\end{array}
\right),
\end{eqnarray*}
where
\begin{eqnarray*}
\widehat{\mathbf{x}}_j=(x_{j1},x_{j2},\cdots,x_{j,n-1})^T.
\end{eqnarray*}

We will need the following lemma on eigenvalue collision.
\begin{lemma} \label{le.rrr}
If we assume the random variables $x_{ij}'s$ are continuous, we have the following events hold with probability one.\\
$i)$: $W$ has simple eigenvalues, i.e. $\lambda_1<\lambda_2<\cdots<\lambda_p$.\\
$ii)$: $W$ and $W^{(p)}$ have no eigenvalue in common.\\
$iii)$: $W$ and $\widehat{W}_{(n)}$ have no eigenvalue in common.
\end{lemma}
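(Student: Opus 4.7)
The strategy for all three parts is the same: express each ``bad event'' as the vanishing of a polynomial in the $pn$ real variables $(x_{ij})$, and invoke that the zero set of any non-identically-zero polynomial on $\mathbb{R}^{pn}$ has Lebesgue measure zero. Since the $x_{ij}$ are continuous and independent, their joint law is absolutely continuous with respect to Lebesgue measure on $\mathbb{R}^{pn}$, and the bad event therefore has probability zero.

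To sidestep the square roots in the definition of $Y$, I would use the identity
\[
\det(W-\lambda I)=\det(D)^{-2}\det(XX^T-\lambda D^2),\qquad D^2:=\mathrm{diag}(\|\mathbf{x}_1\|^2,\ldots,\|\mathbf{x}_p\|^2),
\]
so that the eigenvalues of $W$ are precisely the roots of $P_X(\lambda):=\det(XX^T-\lambda D^2)$, whose coefficients are polynomials in $(x_{ij})$. Define analogously $P^{(p)}_X(\lambda)$ for $W^{(p)}$ and $\widehat{P}_X(\lambda)$ for $\widehat{W}_{(n)}$. Then the events in $(i)$, $(ii)$, $(iii)$ translate respectively into the vanishing of $\mathrm{disc}_\lambda P_X$, $\mathrm{Res}_\lambda(P_X,P^{(p)}_X)$, and $\mathrm{Res}_\lambda(P_X,\widehat{P}_X)$. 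Because the discriminant and the resultant are polynomial in the coefficients of their arguments, each of these is itself a polynomial in $(x_{ij})$, so the task reduces to showing that none of the three polynomials is identically zero.

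For $(i)$, a convenient witness is $X_0$ with rows $\mathbf{x}_i=c_ie_1+e_{i+1}$, $i=1,\ldots,p$, where $c_1,\ldots,c_p$ are pairwise distinct positive constants (allowed since $p+1\le n$): then $W(X_0)$ is a rank-one update of a diagonal matrix with pairwise distinct diagonal entries and with nonzero rank-one direction, so its eigenvalues are distinct by the standard secular equation. For $(ii)$ and $(iii)$ I would start from such an $X_0$ and let only the last row (respectively the last column) vary. On that affine slice of $\mathbb{R}^{pn}$, the matrix $W^{(p)}$ (resp.\ $\widehat{W}_{(n)}$) is independent of the varied variables and has distinct eigenvalues by $(i)$, while $W$ depends non-trivially on them, so a generic choice on the slice avoids the finitely many configurations producing a common eigenvalue; thus the relevant resultant is not identically zero on the slice, and a fortiori not on $\mathbb{R}^{pn}$. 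The main (indeed only) delicate point is this non-vanishing of the resultants, which ultimately reduces to the elementary observation that $W$ genuinely depends on the data distinguishing it from $W^{(p)}$ and from $\widehat{W}_{(n)}$.
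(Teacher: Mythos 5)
Your overall framework is exactly the paper's: reduce each bad event to the vanishing of a polynomial in the $x_{ij}$ (a discriminant for $i)$, resultants for $ii)$ and $iii)$), after clearing the square roots and denominators coming from the normalizations $\|\mathbf{x}_i\|^{-1}$ (the paper multiplies by $P_1(X)=\prod_k\|\mathbf{x}_k\|^2$; your identity $\det(W-\lambda I)=\det(D)^{-2}\det(XX^T-\lambda D^2)$ accomplishes the same thing), and then exhibit a single witness showing the polynomial is not identically zero. For $i)$ your witness $\mathbf{x}_i=c_ie_1+e_{i+1}$ is correct: it gives $W=\mathrm{diag}\bigl((c_i^2+1)^{-1}\bigr)+uu^T$ with distinct diagonal entries and $u_i\neq 0$, hence simple eigenvalues by the secular equation. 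The paper instead uses $x_{ij}=\mathbf{1}_{\{j=i\text{ or }i+1\}}$, producing a Jacobi matrix; both are fine.

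The gap is in $ii)$ and $iii)$, precisely at the step you yourself flag as the only delicate one. Reducing the non-vanishing of the resultant to ``$W$ genuinely depends on the data distinguishing it from $W^{(p)}$ (resp.\ $\widehat W_{(n)}$)'' is not a proof: a matrix can depend nontrivially on parameters while one eigenvalue remains constant (e.g.\ $\mathrm{diag}(1,t)$ always has eigenvalue $1$), so dependence alone does not rule out a persistent common eigenvalue on your slice. What is needed is an actual argument that, for some choice of the varied row (resp.\ column), none of the $p-1$ eigenvalues of the fixed minor is an eigenvalue of $W$. The paper supplies this in two different ways: for $ii)$ it writes $\mathcal{W}=\mathcal{W}^{(p)}+\mathbf{y}_p\mathbf{y}_p^T$ and invokes the fact (citing Lemma 11.4 of \cite{BGM}) that as the last row ranges over $\mathbb{R}^n$ the spectrum of this rank-one update realizes every family compatible with strict interlacing and the trace identity $\sum\lambda_i=\sum\lambda_i^{(p)}+1$, so a family disjoint from $\mathrm{spec}(W^{(p)})$ exists; for $iii)$ it simply exhibits an explicit witness ($x_{ij}=\mathbf{1}_{\{j=i\text{ or }j=n\}}$, giving $\widehat W_{(n)}=I_p$ while $\det(W-I)\neq 0$). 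Your argument becomes complete once you replace the ``genuine dependence'' heuristic by either a concrete witness or the rank-one secular-equation argument (an eigenvalue of the minor persists only if the added vector is orthogonal to the corresponding eigenvector, which fails generically); as written, that step would not survive scrutiny.
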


The proof of Lemma \ref{le.rrr} will be postponed to Appendix A.
\begin{proof}[Proof of Theorem \ref{th.2.2}] The proof for the left singular vectors is nearly the same as the sample covariance matrix case shown in \cite{WK} by using Lemma \ref{le.2.0}, $ii)$ of Lemma \ref{le.rrr} and Theorem \ref{th.2.1}. Moreover, as we have mentioned in the Remark \ref{re.3.3.5}, a slightly weaker delocalization property for the left singular vectors has been provided in \cite{PY}. So we will only present the proof for the right singular vectors below.

Below we denote the $k$-th column of $Y$ by $h_k$, and the remaining $p\times (n-1)$ matrix after deleting $h_k$ by $Y_{(k)}$. Note that $Y_{(n)}$ is not independent of the last column $h_n$. However, for the sample covariance matrix case, the independence between the column and the corresponding submatrix is essential for one to use the concentration results such as Lemma \ref{le.2.5}. To overcome the inconvenience caused by the dependence, we will use the modified matrix $\widehat{Y}_{(n)}$ defined above. Notice that the matrix $\widehat{Y}_{(n)}$ is independent of the random vector $(x_{1n},x_{2n}\cdots,x_{pn})^T$.

Now we define
\begin{eqnarray*}
\Delta_1=Y^T_{(n)}Y_{(n)}-\widehat{Y}^T_{(n)}\widehat{Y}_{(n)},
\end{eqnarray*}
and
\begin{equation}\label{a1}
\Delta_2=Y^T_{(n)}-\widehat{Y}^T_{(n)}.
\end{equation}
The following lemma handles the operator norms of $\Delta_1$ and $\Delta_2$.
\begin{lemma}\label{le.3.4} Under the assumption of Theorem \ref{th.2.2}, we have
\begin{eqnarray*}
||\Delta_1||_{op},||\Delta_2||_{op}= O(\frac{K^2}{n})
\end{eqnarray*}
 with overwhelming probability.
\end{lemma}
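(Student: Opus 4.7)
The plan is to exploit the fact that $Y_{(n)}$ and $\widehat{Y}_{(n)}$ share a common ``raw'' $p\times(n-1)$ block, differing only in the row-scaling. Explicitly, if we write $X^{(n)} := (x_{ij})_{1\leq i\leq p,\,1\leq j\leq n-1}$, $D := \mathrm{diag}(1/\|\mathbf{x}_i\|)_{i=1}^p$ and $\widehat{D} := \mathrm{diag}(1/\|\widehat{\mathbf{x}}_i\|)_{i=1}^p$, then $Y_{(n)} = D X^{(n)}$ and $\widehat{Y}_{(n)} = \widehat{D}\, X^{(n)}$, so
\begin{equation*}
\Delta_2 = (X^{(n)})^T (D - \widehat{D}).
\end{equation*}
The problem therefore reduces to controlling the two factors $\|D-\widehat{D}\|_{op}$ and $\|X^{(n)}\|_{op}$ separately.

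For $\|D-\widehat{D}\|_{op}$, the identity $\|\mathbf{x}_i\|^2 - \|\widehat{\mathbf{x}}_i\|^2 = x_{in}^2$ combined with $|x_{in}|\leq K$ gives
\begin{equation*}
\frac{1}{\|\mathbf{x}_i\|} - \frac{1}{\|\widehat{\mathbf{x}}_i\|} = \frac{-\,x_{in}^2}{\|\mathbf{x}_i\|\,\|\widehat{\mathbf{x}}_i\|\,(\|\mathbf{x}_i\|+\|\widehat{\mathbf{x}}_i\|)}.
\end{equation*}
Applying Lemma \ref{le.2.5} with $H = \mathbb{C}^n$ (resp.\ $\mathbb{C}^{n-1}$) gives $\|\mathbf{x}_i\|,\|\widehat{\mathbf{x}}_i\| = \sqrt{n}(1+o(1))$ with overwhelming probability, so each diagonal entry of $D-\widehat{D}$ is $O(K^2/n^{3/2})$. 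A union bound over $i=1,\dots,p$ (harmless, since ``overwhelming probability'' is preserved under polynomial union bounds) yields $\|D-\widehat{D}\|_{op} = O(K^2/n^{3/2})$ with overwhelming probability. For $\|X^{(n)}\|_{op}$, notice that $X^{(n)}$ is a column-submatrix of $\sqrt{n}\,X$ (where $X$ is the sample covariance factor in the paper), and removing a column cannot increase the operator norm; hence $\|X^{(n)}\|_{op}\leq \sqrt{n}\,\|X\|_{op} = \sqrt{n\tilde{\lambda}_p} = O(\sqrt{n})$ with overwhelming probability by Theorem 5.9 of \cite{BS}. Multiplying the two bounds gives $\|\Delta_2\|_{op}=O(K^2/n)$.

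The bound on $\Delta_1$ then follows from the standard telescoping identity
\begin{equation*}
\Delta_1 \;=\; Y_{(n)}^T Y_{(n)} - \widehat{Y}_{(n)}^T\widehat{Y}_{(n)} \;=\; Y_{(n)}^T \Delta_2^T + \Delta_2\,\widehat{Y}_{(n)},
\end{equation*}
so $\|\Delta_1\|_{op} \leq \|\Delta_2\|_{op}\bigl(\|Y_{(n)}\|_{op}+\|\widehat{Y}_{(n)}\|_{op}\bigr)$. Since $Y_{(n)}$ is a column-submatrix of $Y$, we have $\|Y_{(n)}\|_{op}\leq \|Y\|_{op} = \sqrt{\lambda_p} = O(1)$ with overwhelming probability by \eqref{2.4}, and $\|\widehat{Y}_{(n)}\|_{op}\leq \|Y_{(n)}\|_{op}+\|\Delta_2^T\|_{op} = O(1)$. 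Together this gives $\|\Delta_1\|_{op}=O(K^2/n)$ with overwhelming probability, completing the lemma.

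There is no real obstacle here beyond careful bookkeeping; the whole argument is a perturbation estimate driven by the algebraic fact that the only difference between $\|\mathbf{x}_i\|$ and $\|\widehat{\mathbf{x}}_i\|$ is one term of size at most $K^2$, which Lemma \ref{le.2.5} converts into a $1/\sqrt{n}$ gain in the reciprocals. The mildly subtle point is that Lemma \ref{le.2.5} requires mean-zero, variance-one, bounded-by-$K$ entries, all of which are already in force under the hypotheses of Theorem \ref{th.2.1} (and hence Theorem \ref{th.2.2}).
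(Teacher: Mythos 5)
Your proof is correct and follows essentially the same route as the paper: both factor $\Delta_2^T = Y_{(n)}-\widehat{Y}_{(n)}$ as a diagonal matrix (driven by the identity $\|\mathbf{x}_i\|^2-\|\widehat{\mathbf{x}}_i\|^2=x_{in}^2$ and $|x_{in}|\le K$) times a matrix of controlled operator norm, and then telescope for $\Delta_1$; your splitting $(D-\widehat{D})X^{(n)}$ differs from the paper's $-\Delta_3\widehat{Y}_{(n)}$ only in which factor absorbs the $1/\|\widehat{\mathbf{x}}_i\|$ normalization.
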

\begin{proof}
Observe that
\begin{eqnarray*}
\Delta_1=(Y^T_{(n)}-\widehat{Y}^T_{(n)})Y_{(n)}+\widehat{Y}^T_{(n)}(Y_{(n)}-\widehat{Y}_{(n)})=\Delta_2Y_{(n)}+\widehat{Y}^T_{(n)}\Delta_2^T.
\end{eqnarray*}
We only discuss the second term since the first one is analogous. It is easy to see the entries of $\Delta_2^T$ satisfy
\begin{eqnarray*}
\frac{x_{ij}}{||\mathbf{x}_i||}-\frac{x_{ij}}{||\widehat{\mathbf{x}}_i||}=\frac{x_{ij}(||\widehat{\mathbf{x}}_i||^2
-||\mathbf{x}_i||^2)}{||\mathbf{x}_i||||\widehat{\mathbf{x}}_i||(||\mathbf{x}_i||+||\widehat{\mathbf{x}}_i||)}
=-\frac{x_{in}^2}{||\mathbf{x}_i||(||\mathbf{x}_i||+||\widehat{\mathbf{x}}_i||)}\cdot\frac{x_{ij}}{||\widehat{\mathbf{x}_i}||}.
\end{eqnarray*}
It follows that
\begin{eqnarray*}
\widehat{Y}^T_{(n)}\Delta_2^T:=-\widehat{Y}^T_{(n)}\Delta_3\widehat{Y}_{(n)},
\end{eqnarray*}
where $\Delta_3$ is a $p\times p$ diagonal matrix with $(i,i)$-th entry to be
\begin{eqnarray*}
\frac{x_{in}^2}{||\mathbf{x}_i||(||\mathbf{x}_i||+||\widehat{\mathbf{x}}_i||)}.
\end{eqnarray*}
Thus it is easy to see
\begin{eqnarray*}
||\Delta_3||_{op}=O(\frac{K^2}{n}),
\end{eqnarray*}
with overwhelming probability. Together with the fact that $||\widehat{Y}_{(n)}||_{op}\leq C$ holds with overwhelming probability, we can conclude the proof of Lemma \ref{le.3.4}.
\end{proof}

Now we proceed to the proof of Theorem \ref{th.2.2}. If we denote
\begin{eqnarray*}
u_i=\binom{\mathbf{w}}{x},
\end{eqnarray*}
where $x$ is the last component of $u_i$. Without loss of generality, we can only prove the theorem for $x$. Notice that $u_i$ is the eigenvector of $\mathcal{W}=Y^TY$ corresponding to the eigenvalue $\lambda_i$.
From
 \begin{eqnarray*}
 \left(
 \begin{array}{cccc}
 Y^T_{(n)}Y_{(n)} &Y^T_{(n)}h_n\\\\
 h_n^TY_{(n)}  &h_n^Th_n
 \end{array}
 \right)\binom{\mathbf{w}}{x}=\lambda_i\binom{\mathbf{w}}{x},
 \end{eqnarray*}
we have
 \begin{eqnarray}
 Y^T_{(n)}Y_{(n)}\mathbf{w}+xY^T_{(n)}h_n=\lambda_i\mathbf{w},\label{3.51}
 \end{eqnarray}
 and
 \begin{eqnarray}
 h_n^TY_{(n)}\mathbf{w}+xh_n^Th_n=\lambda_ix. \label{3.52}
 \end{eqnarray}
 (\ref{3.51}) can be rewritten as
 \begin{eqnarray*}
 (\widehat{Y}^T_{(n)}\widehat{Y}_{(n)}+\Delta_1)\mathbf{w}+x(\widehat{Y}^T_{(n)}+\Delta_2)h_n=\lambda_i\mathbf{w}.
 \end{eqnarray*}
It follows that
 \begin{eqnarray}
 (\widehat{Y}^T_{(n)}\widehat{Y}_{(n)}-\lambda_i)\mathbf{w}=-x\widehat{Y}^T_{(n)}h_n-x\Delta_2h_n-\Delta_1\mathbf{w}.\label{3.59}
 \end{eqnarray}
 Note that $\widehat{Y}_{(n)}^T\widehat{Y}_{(n)}$ share the same nonzero eigenvalues with $\widehat{W}_{(n)}$, so by $iii)$ of Lemma \ref{le.rrr}, we can always view that the matrix $\widehat{Y}^T_{(n)}\widehat{Y}_{(n)}-\lambda_i$ is invertible. Consequently,
 \begin{eqnarray*}
 ||\mathbf{w}||^2
 &=&[x\widehat{Y}^T_{(n)}h_n+x\Delta_2h_n+\Delta_1\mathbf{w}]^T(\widehat{Y}^T_{(n)}\widehat{Y}_{(n)}-\lambda_i)^{-2}[x\widehat{Y}^T_{(n)}
 h_n+x\Delta_2h_n+\Delta_1\mathbf{w}]\nonumber\\
 \end{eqnarray*}
If $x=0$ then Theorem \ref{th.2.2} is evidently true. Consider $x\neq 0$ below. Together with the fact that $x^2=1-||\mathbf{w}||^2$, we have
\begin{eqnarray}
 x^2
=\frac{1}{1+[\widehat{Y}_{(n)}^T h_n+\Delta_2 h_n+x^{-1}\Delta_1 \mathbf{w}]^T(\widehat{Y}_{(n)}^T\widehat{Y}_{(n)}-\lambda_i)^{-2}[\widehat{Y}_{(n)}^T h_n+\Delta_2 h_n+x^{-1}\Delta_1 \mathbf{w}]}.\nonumber
\end{eqnarray}

Now if we use $\hat{\lambda}_j$ to denote the ordered nonzero eigenvalue of $\widehat{Y}_{(n)}^T\widehat{Y}_{(n)}$ and  $\hat{u}_j$  the corresponding unit eigenvector. And set the projection
\begin{eqnarray*}
  \widehat{P}=I-\sum_{j=1}^p\hat{u}_j\hat{u}_j^T.
  \end{eqnarray*}
  Then by the spectral decomposition one has
\begin{eqnarray}
x^2=\frac{1}{1+\sum_{j=1}^p\frac{1}{(\hat{\lambda}_j-\lambda_i)^2}|\hat{u}_j\cdot (\widehat{Y}_{(n)}^T h_n+\Delta_2 h_n+x^{-1}\Delta_1 \mathbf{w})|^2+\Delta},
\label{3.58}
\end{eqnarray}
where
  \begin{eqnarray*}
  \Delta=\frac{1}{\lambda_i^2}||\widehat{P}(\widehat{Y}_{(n)}^T h_n+\Delta_2 h_n+x^{-1}\Delta_1 \mathbf{w})||^2.
  \end{eqnarray*}

 Therefore to show $|x|\leq n^{-1/2}K^{C_0/2}\log^{O(1)}n$, we only need to prove
  \begin{eqnarray}
\quad\quad  \sum_{j=1}^p\frac{1}{(\hat{\lambda}_j-\lambda_i)^2}|\hat{u}_j\cdot (\widehat{Y}_{(n)}^T h_n+\Delta_2 h_n+x^{-1}\Delta_1 \mathbf{w})|^2\geq nK^{-C_0}\log^{-O(1)}n. \label{3.53}
  \end{eqnarray}
To prove (\ref{3.53}), we need to separate the issue into the bulk case and the edge case. Before that, we shall provide the following lemma which will be used in both cases.
\begin{lemma} \label{le.3.5} If we denote the unit eigenvector of $\widehat{W}_{(n)}$ corresponding to $\hat{\lambda}_j$ by $\hat{v}_j$,  under the assumption of Theorem \ref{th.2.2} we have for any $J\subseteq \{1,\cdots,p\}$ with $|J|=d\leq nK^{-3}$,
\begin{eqnarray*}
\sqrt{n}[\sum_{j\in J}(\hat{v}_{j}\cdot h_n)^2]^{1/2}=\sqrt{d}+O(K\log n)
\end{eqnarray*}
with overwhelming probability.
\end{lemma}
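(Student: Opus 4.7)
\medskip
\noindent\textbf{Proof proposal.} My plan is to recast the sum as a projection norm and reduce the problem to a direct application of Lemma \ref{le.2.5}. Let $H=\mathrm{span}\{\hat{v}_j:j\in J\}\subset\mathbb{R}^p$, a subspace of dimension $d$, and let $\pi_H$ denote the orthogonal projection onto $H$. Then
\begin{equation*}
\sqrt{n}\Bigl[\sum_{j\in J}(\hat{v}_j\cdot h_n)^2\Bigr]^{1/2}=\bigl\|\pi_H(\sqrt{n}\,h_n)\bigr\|.
\end{equation*}
The crucial observation is that $\widehat{W}_{(n)}=\widehat{Y}_{(n)}\widehat{Y}_{(n)}^T$ depends only on the first $n-1$ columns of $X$, hence its eigenvectors $\hat{v}_j$, and consequently $\pi_H$, are independent of the random vector $\mathcal{X}:=(x_{1n},\ldots,x_{pn})^T$.

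To bring $\mathcal{X}$ into the picture I would introduce the diagonal matrix $D$ with entries $D_{ii}=\sqrt{n}/\|\mathbf{x}_i\|$, so that $\sqrt{n}\,h_n=D\mathcal{X}$. Applying the concentration estimate (\ref{3.11.1}) coordinatewise (together with the truncation $|x_{in}|\leq K$ and $\|\mathbf{x}_i\|^2=\|\widehat{\mathbf{x}}_i\|^2+x_{in}^2$) gives $\|D-I\|_{op}=O(K^2\log^2 n/\sqrt{n})$ with overwhelming probability. Writing $\sqrt{n}\,h_n=\mathcal{X}+(D-I)\mathcal{X}$ therefore yields
\begin{equation*}
\bigl\|\pi_H(\sqrt{n}\,h_n)\bigr\|=\|\pi_H(\mathcal{X})\|+O\bigl(\|D-I\|_{op}\cdot\|\mathcal{X}\|\bigr).
\end{equation*}
A standard moment/concentration bound for $\|\mathcal{X}\|$ gives $\|\mathcal{X}\|=O(\sqrt{p})$ with overwhelming probability, so the error term is of the same lower order as the $O(K\log n)$ allowed in the statement (recalling $K\geq\log^{O(1)}n$ from Remark 1.2, and $p/n\to y<1$).

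It then remains to estimate $\|\pi_H(\mathcal{X})\|$. Conditioning on $\widehat{Y}_{(n)}$, the projection $\pi_H$ is deterministic, while $\mathcal{X}$ still has independent entries with mean zero, variance one, and magnitude bounded by $K$. Since $d\leq nK^{-3}$, the hypotheses of Lemma \ref{le.2.5} are met, and we obtain
\begin{equation*}
\|\pi_H(\mathcal{X})\|=\sqrt{d}+O(K\log n)
\end{equation*}
with overwhelming probability. Combining this with the approximation above gives the claimed bound.

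The main (minor) obstacle is the lack of independence between $D$ and $\mathcal{X}$ --- the normalizer $\|\mathbf{x}_i\|$ contains $x_{in}^2$ --- which prevents us from viewing $\sqrt{n}\,h_n$ as a clean projection of an independent-entry vector. However, since this coupling only shifts $D$ by $O(K^2/n)$ relative to its ``deterministic'' surrogate $\sqrt{n}/\|\widehat{\mathbf{x}}_i\|$, it is absorbed into the operator-norm bound on $D-I$ and creates no genuine difficulty; the argument is otherwise a direct application of Lemma \ref{le.2.5}.
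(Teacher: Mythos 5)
Your overall strategy --- decouple the normalizers from the last column and then apply a concentration inequality for projections of the independent-entry vector $\mathcal{X}=(x_{1n},\dots,x_{pn})^T$ --- is the same as the paper's, and your treatment of the coupling between $D$ and $\mathcal{X}$ (replacing $\|\mathbf{x}_i\|$ by $\|\widehat{\mathbf{x}}_i\|$ at a cost of $O(K^2/n)$ per entry) matches the paper's estimate of the correction terms $d_j$. But there is a genuine quantitative gap in the step where you peel off the diagonal matrix. You write $\sqrt{n}\,h_n=\mathcal{X}+(D-I)\mathcal{X}$ and bound the second piece by $\|D-I\|_{op}\,\|\mathcal{X}\|=O(K^2\log^{O(1)}n/\sqrt{n})\cdot O(\sqrt{p})=O(K^2\log^{O(1)}n)$. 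This is \emph{not} of lower order than the allowed error: it exceeds $O(K\log n)$ by a factor of roughly $K\log^{O(1)}n$, which diverges since $K\geq\log^{O(1)}n$. So as written your argument proves only $\sqrt{d}+O(K^2\log^{O(1)}n)$, not the stated $\sqrt{d}+O(K\log n)$; the precision matters downstream, e.g.\ in (\ref{3.69}) where the lemma is applied with $d=1$.

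The paper avoids this loss by never separating $\hat{D}\mathcal{X}$ into $\mathcal{X}$ plus a remainder. Instead it treats $F(\mathcal{X})=\|P_J\hat{D}\mathcal{X}\|$ (with $P_J=\sum_{j\in J}\hat v_j\hat v_j^T$ and $\hat D$ the surrogate diagonal matrix, independent of $\mathcal{X}$) as a single convex function that is $2$-Lipschitz in $\mathcal{X}$ with overwhelming probability, applies Talagrand's inequality to get concentration at scale $O(K\log n)$ around the median, and then locates the median within $O(K)$ of $\sqrt{d}$ by a Chebyshev/second-moment computation on the quadratic form $\mathcal{X}^T\hat{D}P_J\hat{D}\mathcal{X}$, using $\mathrm{Tr}(\hat D P_J\hat D)=d(1+o(1))$ and the hypothesis $d\leq nK^{-3}$. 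In effect, the diagonal perturbation enters only through $\mathrm{Tr}(\hat D P_J\hat D)-d$, which is controlled by $d\,\|\hat D-I\|_{op}$ rather than by $\|\mathcal{X}\|\,\|\hat D-I\|_{op}$. If you want to salvage your decomposition, you would need to estimate $\|\pi_H((\hat D-I)\mathcal{X})\|$ via its Frobenius-norm bound $\sqrt{d}\,\|\hat D-I\|_{op}$ plus a further concentration step, rather than via the crude operator-norm times $\|\mathcal{X}\|$ bound; the direct invocation of Lemma \ref{le.2.5} alone does not suffice.
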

We will postpone the proof of Lemma \ref{le.3.5} to Appendix B. In fact, it can be viewed as a modification of Lemma \ref{le.2.5}. \\

Now we decompose the proof of Theorem \ref{th.2.2} into two parts: bulk case and edge case.\\\\
$\bullet$ Bulk case: \emph{$\lambda_i\in[a+\epsilon, b-\epsilon]$ for some $\epsilon>0$}\\

Note that the local MP law (Theorem \ref{th.2.1}) can also be applied to
the matrix $\widehat{Y}^T_{(n)}\widehat{Y}_{(n)}$. Thus we can find a set $J\subseteq\{1,\cdots,p\}$ with $|J|\geq K^2\log^{20}n$ such that $\hat{\lambda}_j=\lambda_i+O(K^2\log^{20}n/n)$ for any $j\in J$ when $\lambda_i$ is in the bulk region of the MP law.
It follows that
\begin{eqnarray}
&&\sum_{j\in J}\frac{1}{(\hat{\lambda}_j-\lambda_i)^2}|\hat{u}_j\cdot (\widehat{Y}_{(n)}^T h_n+\Delta_2 h_n+x^{-1}\Delta_1 \mathbf{w})|^2\nonumber\\
&&\geq C\frac{n^2}{K^4\log^{40}n}\sum_{j\in J}|\hat{u}_j\cdot (\widehat{Y}_{(n)}^T h_n+\Delta_2 h_n+x^{-1}\Delta_1 \mathbf{w})|^2. \label{3.67}
\end{eqnarray}

 By the singular value decomposition, we have
 \begin{equation}\label{a2}
 \hat{u}_j\cdot \widehat{Y}^T_{(n)}h_n=\hat{\lambda}_j^{1/2}\hat{v}_j\cdot h_n.
 \end{equation}
Now we compare
\begin{eqnarray}
 \sum_{j\in J}|\hat{u}_j\cdot \widehat{Y}^T_{(n)}h_n|^2=\sum_{j\in J}\hat{\lambda}_j|\hat{v}_j\cdot h_n|^2\label{3.54}
\end{eqnarray}
with
\begin{eqnarray}
\sum_{j\in J}|\hat{u}_j\cdot(\Delta_2 h_n+x^{-1}\Delta_1\mathbf{w})|^2\label{3.55}
\end{eqnarray}
for any $J\subset\{1,\cdots,p\}$ such that $K^2\log^{20}n\leq |J|\leq nK^{-3}$.

If $|x|\leq n^{-1/2}K^{C_0/2}\log^{O(1)}n$, then we get the conclusion for the bulk case. So we assume $|x|\geq n^{-1/2}K^{C_0/2}\log^{O(1)}n$ below to get (\ref{3.53}). By Lemma \ref{le.3.4}, if we choose $C_0\geq 20$ (say), we have
\begin{eqnarray}
(\ref{3.55})&\leq  &
2|J|(||\Delta_2||_{op}||h_n||)^2+2x^{-2}|J|(||\Delta_1||_{op}||\mathbf{w}||)^2 \nonumber\\
&\leq& |J|n^{-1}K^{-C_0/2}\log^{-O(1)}n
\label{3.60}
\end{eqnarray}
 with overwhelming probability.
On the other side, Lemma \ref{le.3.5} implies
\begin{eqnarray}
(\ref{3.54})=Cn^{-1}(|J|+O(K^2\log^2 n))\label{3.56}
\end{eqnarray}
with overwhelming probability.
So one has
\begin{eqnarray}
\sum_{j\in J}|\hat{u}_j\cdot \widehat{Y}^T_{(n)}h_n|^2\gg \sum_{j\in J}|\hat{u}_j\cdot(\Delta_2 h_n+x^{-1}\Delta_1\mathbf{w})|^2,\label{3.57}
\end{eqnarray}
where $\gg$ means ``much larger than", i.e. $$\Big(\sum_{j\in J}|\hat{u}_j\cdot(\Delta_2 h_n+x^{-1}\Delta_1\mathbf{w})|^2\Big)/\Big(\sum_{j\in J}|\hat{u}_j\cdot \widehat{Y}^T_{(n)}h_n|^2\Big)=o(1).$$

Notice that for any real number sequence $\{S_1,\cdots,S_m\}$ and $\{T_1,\cdots,T_m\}$ with $\sum_{i=1}^mS_i^2\gg \sum_{i=1}^{m}T_i^2$, there exists some $c$ near 1 such that $\sum_{i=1}^m(S_i+T_i)^2\geq c\sum_{i=1}^mS_i^2$. Therefore by (\ref{3.56}),(\ref{3.57}) and (\ref{3.67}) we can obtain
\begin{eqnarray*}
\sum_{j\in J}\frac{1}{(\hat{\lambda}_j-\lambda_i)^2}|\hat{u}_j\cdot (\widehat{Y}_{(n)}^T h_n+\Delta_2 h_n+x^{-1}\Delta_1 \mathbf{w})|^2\geq CnK^{-2}\log^{-20}n,
\end{eqnarray*}
which implies (\ref{3.53}) directly. So we conclude the proof for the bulk case.\\

Next, we turn to the edge case.\\\\
$\bullet$ Edge case: $a-o(1)\leq\lambda_i\leq a+\epsilon$ or $b-\epsilon\leq\lambda_i\leq b+o(1)$ with some $\epsilon>0$.\\

For the edge case we also begin with the representation (\ref{3.58}).
By (\ref{3.59}), we have
\begin{eqnarray}
\mathbf{w}=-x(\widehat{Y}^T_{(n)}\widehat{Y}_{(n)}-\lambda_i)^{-1}(\widehat{Y}^T_{(n)}h_n+\Delta_2 h_n+x^{-1}\Delta_1 \mathbf{w}). \label{3.100}
\end{eqnarray}
Inserting (\ref{3.100}) and (\ref{a1}) into (\ref{3.52}) we find
\begin{eqnarray*}
(\widehat{Y}^T_{(n)}h_n+\Delta_2 h_n)^T(\widehat{Y}^T_{(n)}\widehat{Y}_{(n)}-\lambda_i)^{-1}(\widehat{Y}^T_{(n)}h_n+\Delta_2 h_n+x^{-1}\Delta_1 \mathbf{w})=h_n^Th_n-\lambda_i.
\end{eqnarray*}
Furthermore,
\begin{eqnarray*}
&&|x^{-1}\mathbf{w}^T\Delta_1^T(\widehat{Y}^T_{(n)}\widehat{Y}_{(n)}-\lambda_i)^{-1}(\widehat{Y}^T_{(n)}h_n+\Delta_2 h_n+x^{-1}\Delta_1 \mathbf{w})|\nonumber\\
&&=|x^{-2}\mathbf{w}^T\Delta_1^T\mathbf{w}|\leq |x|^{-2}||\Delta_1||_{op}||\mathbf{w}||^2=||\Delta_1||_{op}\frac{1-x^2}{x^2}.
\end{eqnarray*}
Thus one has
\begin{eqnarray}
&&(\widehat{Y}^T_{(n)}h_n+\Delta_2h_n+x^{-1}\Delta_1 \mathbf{w})^T(\widehat{Y}^T_{(n)}\widehat{Y}_{(n)}-\lambda_i)^{-1}(\widehat{Y}^T_{(n)}h_n+\Delta_2 h_n+x^{-1}\Delta_1 \mathbf{w})\nonumber\\
&&=h_n^Th_n-\lambda_i+O\left(||\Delta_1||_{op}\frac{1-x^2}{x^2}\right). \label{3.49}
\end{eqnarray}

Similarly to the bulk case, we only need to get (\ref{3.53}). Below we also assume $|x|\geq C n^{-1/2}K^{C_0/2}\log^{O(1)}n$ to get (\ref{3.53}).
Similar to (\ref{3.60}), by using Lemma \ref{le.3.4} we have
\begin{eqnarray}
|\hat{u}_j\cdot (\Delta_2 h_n+x^{-1}\Delta_1 \mathbf{w})|^2\leq n^{-1}K^{-C_0/2}\log^{-O(1)}n.\label{3.61}
\end{eqnarray}
Moreover, by Lemma \ref{le.3.5} and (\ref{a2}), we also have
\begin{eqnarray}
|\hat{u}_j\cdot \widehat{Y}_{(n)}^T h_n|^2=\hat{\lambda}_i|\hat{v}_j\cdot h_n|\leq Cn^{-1}K^2\log^2 n \label{3.69}
\end{eqnarray}
holds with overwhelming probability.
Thus to provide (\ref{3.53}), it suffices to show
\begin{eqnarray*}
\sum_{j=1}^p\frac{1}{(\hat{\lambda}_j-\lambda_i)^2}|\hat{u}_j\cdot (\widehat{Y}_{(n)}^T h_n+\Delta_2 h_n+x^{-1}\Delta_1 \mathbf{w})|^4\geq
K^{-C_0+2}\log^{-O(1)}n
\end{eqnarray*}
instead. By the Cauchy-Schwarz inequality, we only need to prove
\begin{eqnarray}
\sum_{i-T_{-}\leq j\leq i+T_{+}}\frac{1}{|\hat{\lambda}_j-\lambda_i|}|\hat{u}_j\cdot (\widehat{Y}_{(n)}^T h_n+\Delta_2 h_n+x^{-1}\Delta_1 \mathbf{w})|^2 \geq \log^{-O(1)}n\label{3.66}
\end{eqnarray}
with overwhelming probability for some $1\leq T_{-}<T_{+}\leq K^{2}\log^{O(1)}n$.

Notice that under the assumption $|x|\geq C n^{-1/2}K^{C_0/2}\log^{O(1)}n$, by Lemma \ref{le.3.4} we have
\begin{eqnarray*}
||\Delta_1||_{op}\frac{1-x^2}{x^2}=o(1).
\end{eqnarray*}
Moreover, it is not difficult to see $h_n^Th_n=y+o(1)$ with overwhelming probability. Thus by (\ref{3.49}), we have with overwhelming probability
$$\sum_{j=1}^p\frac{1}{(\hat{\lambda}_j-\lambda_i)}|\hat{u}_j\cdot (\widehat{Y}_{(n)}^T h_n+\Delta_2 h_n+x^{-1}\Delta_1 \mathbf{w})|^2-\frac{1}{\lambda_i}||\widehat{P}(\widehat{Y}_{(n)}^T h_n+\Delta_2 h_n+x^{-1}\Delta_1 \mathbf{w})||^2$$
$$
=h_n^Th_n-\lambda_i+O(||\Delta_1||_{op}\frac{1-x^2}{x^2})=y-\lambda_i+o(1).
$$
Observing that
\begin{eqnarray*}
\widehat{P}\widehat{Y}_{(n)}^T h_n=0
\end{eqnarray*}
and
\begin{eqnarray*}
||\widehat{P}(\Delta_2 h_n+x^{-1}\Delta_1 \mathbf{w})||^2\ll n^{-1},
\end{eqnarray*}
we also have
\begin{eqnarray}
\sum_{j=1}^p\frac{1}{(\hat{\lambda}_j-\lambda_i)}|\hat{u}_j\cdot (\widehat{Y}_{(n)}^T h_n+\Delta_2 h_n+x^{-1}\Delta_1 \mathbf{w})|^2=y-\lambda_i+o(1).\label{3.65}
\end{eqnarray}

So to prove (\ref{3.66}) we only need to evaluate
\begin{eqnarray}
\sum_{j<i-T_{-}~or~j>i+ T_{+}}\frac{1}{(\hat{\lambda}_j-\lambda_i)}|\hat{u}_j\cdot (\widehat{Y}_{(n)}^T h_n+\Delta_2 h_n+x^{-1}\Delta_1 \mathbf{w})|^2.\label{3.63}
\end{eqnarray}

To do this, we let $A>100$ be a constant large enough. For any interval $I$ of length $|I|=K^{2}\log^An/n$, we set $d_I:=\frac{\mathrm{dist}(\lambda_i,I)}{|I|}$, where
 $$\mathrm{dist}(\lambda_i,I)=\min_{x\in I}|\lambda_i-x|\mathrm{sgn}(\lambda_i,I).$$
 Here $\mathrm{sgn}(\lambda_i,I)=1$(\emph{resp.} $-1$) when $\lambda_i$ is on the left (\emph{resp.} right) hand side of $I$.

 By Theorem \ref{th.2.1}, the interval $I$ with $|d_I|<\log n$ contains at most $K^{2}\log^{O(1)}n$ eigenvalues. So we can set $T_{-},T_{+}$ accordingly so that such intervals don't contain any $\hat \lambda_j$ if $j<i-T_{-}$ or $j>i+T_{+}$. In the following we only consider $I$ such that $|d_I| \geq\log n$ in the estimation of \eqref{3.63}. Note that for $\hat{\lambda}_j\in I$,
\begin{eqnarray*}
\frac{1}{\hat{\lambda}_j-\lambda_i}=\frac{1}{d_I|I|}+O(\frac{1}{d_I^2|I|}).
\end{eqnarray*}
Using (\ref{3.61}) and (\ref{3.69}) again one has
\begin{eqnarray*}
&&2|(\hat{u}_j\cdot\widehat{Y}^T_{(n)}h_n)(\hat{u}_j\cdot(\Delta_2h_n+x^{-1}\Delta_1\mathbf{w}))|+|\hat{u}_j\cdot\Delta_2h_n+x^{-1}\hat{u}_j\cdot\Delta_1\mathbf{w}|^2\\
&\leq& Cn^{-1}K^{-O(1)}\log^{-O(1)}n
\end{eqnarray*}
when $|x|\geq n^{-1/2}K^{C_0/2}\log^{O(1)}n$. Thus we can find
$$
\sum_{j\in I}\frac{1}{|\hat{\lambda}_j-\lambda_i|}(2|
(\hat{u}_j\cdot\widehat{Y}^T_{(n)}h_n)(\hat{u}_j\cdot(\Delta_2h_n+x^{-1}\Delta_1\mathbf{w}))|+|\hat{u}_j\cdot\Delta_2h_n+x^{-1}\hat{u}_j\cdot\Delta_1\mathbf{w}|^2)$$
\begin{eqnarray}\leq C\frac{N_{I}}{|d_I||I|n}K^{-O(1)}\log^{-O(1)}n\leq C\frac{1}{|d_I|}K^{-O(1)}\log^{-O(1)}n.  \label{3.62}
\end{eqnarray}
Here we used Lemma \ref{le.2.4} in the last inequality.
Now we partition the real line into intervals $I$ of length $K^{2}\log^{A}n/n$, and sum (\ref{3.62}) over all intervals $I$ with $|d_I|\geq \log n$. Then
\begin{eqnarray*}
\sum_{I}\frac{1}{d_I}K^{-O(1)}\log^{-O(1)}n=o(1).
\end{eqnarray*}
So we can evaluate
\begin{eqnarray}
 \sum_{j<i-T_{-}~or~j>i+ T_{+}}\frac{1}{(\hat{\lambda}_j-\lambda_i)}|\hat{u}_j\cdot\widehat{Y}_{(n)}^T h_n|^2=\sum_{j<i-T_{-}~or~j>i+ T_{+}}\frac{\hat{\lambda}_j}{(\hat{\lambda}_j-\lambda_i)}|\hat{v}_j\cdot h_n|^2.
\label{3.64}
\end{eqnarray}
instead of (\ref{3.63}). The evaluation of (\ref{3.64}) is really the same as the counterpart in the sample covariance matrix case (see (4.5) in \cite{WK}) by inserting Lemma \ref{le.3.5}, so we omit the details here. In fact, we can finally get
\begin{eqnarray*}
\sum_{j<i-T_{-}~or~j>i+ T_{+}}\frac{\hat{\lambda}_j}{(\hat{\lambda}_j-\lambda_i)}|\hat{v}_j\cdot h_n|^2
&=&p.v.\int_a^b y\frac{x}{x-\lambda_i}\rho_{MP,y}(x)dx+o(1)\nonumber\\
&=&y+\lambda_i~p.v.\int_a^b\frac{\rho_{MP,y}(x)}{x-\lambda_i}dx+o(1)
\end{eqnarray*}
where $p.v.$ means the principal value.

Using the formula for the Stieltjes transform $s(z)$, one can get from residue calculus that for $\lambda_i\in[a,b]$,
\begin{eqnarray*}
p.v.\int_a^b\frac{\rho_{MP,y}(x)}{x-\lambda_i}dx=\frac{1-y-\lambda_i}{2y\lambda_i},
\end{eqnarray*}
and for $\lambda_i\not\in[a,b]$
\begin{eqnarray*}
p.v.\int_a^b\frac{\rho_{MP,y}(x)}{x-\lambda_i}dx=\frac{1-y-\lambda_i+\sqrt{(\lambda_i-1-y)^2-4y}}{2y\lambda_i}.
\end{eqnarray*}
Consequently by the definition of $a$ and $b$, if $|\lambda_i-a|\leq o(1)$, we have
\begin{eqnarray*}
(\ref{3.65})=-1+2\sqrt{y}+o(1),~~~(\ref{3.64})=\sqrt{y}+o(1).
\end{eqnarray*}
And if $|\lambda_i-b|\leq o(1)$, we have
\begin{eqnarray*}
(\ref{3.65})=-1-2\sqrt{y}+o(1),~~~(\ref{3.64})=-\sqrt{y}+o(1).
\end{eqnarray*}

Then it is easy to see when $0< y<1$, (\ref{3.66}) holds with overwhelming probability for the case where $|\lambda_i-a|=o(1)$ or $|\lambda_i-b|=o(1)$. Moreover by continuity we can adjust the value of $\epsilon$ to get the conclusion for the general case $a-o(1)\leq\lambda_i\leq a+\epsilon$ or $b-\epsilon\leq\lambda_i\leq b+o(1)$. Thus we complete the proof of the delocalization for $u_i$.
\end{proof}

\section{Green function comparison theorem}
In this section, we provide a Green function comparison theorem for the sample correlation matrices satisfying $\mathbf{C}_1$. The proof heavily relies on the recent results of Pillai and Yin \cite{PY} on sample covariance matrices and the delocalization property for the right singular vectors proved in the last section. At first, we will borrow some results from \cite{PY} directly with only minor notation change. In fact, by Theorem 1.5 in \cite{PY}, it is not difficult to see Theorem 1.2 and Theorem 1.3 of \cite{PY} also hold for sample correlation matrices under our basic condition $\mathbf{C_1}$.

 To state the results in \cite{PY}, we need to introduce some notation. Define the parameter
 \begin{eqnarray*}
 \varphi:=(\log p)^{\log\log p},
 \end{eqnarray*}
 and
 \begin{eqnarray*}
 \lambda_\pm:=(1\pm(\frac pn)^{1/2})^2.
 \end{eqnarray*}
 Moreover we introduce the ``nonasymptotic Marchenko-Pastur law ''
 \begin{eqnarray*}
 \rho_W(x)=\frac{n}{2\pi xp}\sqrt{(\lambda_+-x)(x-\lambda_-)}\mathbf{1}_{[\lambda_-,\lambda_+]}(x)
 \end{eqnarray*}
 and the corresponding distribution function $F_W(x)$
 and Stieltjes transform
 \begin{eqnarray*}
 s_W(z)=\int_\mathbb{R}\frac{\rho_W(x)}{x-z}dx.
 \end{eqnarray*}
 For $\zeta\geq0$, define the set
 \begin{eqnarray}
 \underline{S}(\zeta):=\{z\in \mathbb{C}: 0\leq E\leq 5\lambda_+, \varphi^{\zeta}p^{-1}\leq\eta\leq 10(1+\frac pn)\}.\label{4.12}
 \end{eqnarray}
 And we say that an event $\Omega$ holds with $\zeta$-high probability if there exists a constant $C>0$ such that
 \begin{eqnarray}
 \mathbb{P}(\Omega^c)\leq p^C\exp(-\varphi^\zeta) \label{4.4}
 \end{eqnarray}
 for large enough $p$. Note that (\ref{4.4}) implies that the event $\Omega$ holds with overwhelming probability if $\zeta>0$. We further denote
 \begin{eqnarray*}
 \Lambda_d:=\max_{k}|G_{kk}-s_W(z)|,~~~~\Lambda_o:=\max_{k\neq l}|G_{kl}|,~~~~\Lambda:=|s_p(z)-s_W(z)|.
 \end{eqnarray*}

 \begin{lemma}\emph{(Theorem 1.5, \cite{PY})} \label{le.4.3} Under the condition $\mathbf{C_1}$, for any $\zeta>0$ there exists a constant $C_\zeta$ such that the following events hold with $\zeta$-high probability.

 (i) The Stieltjes transform of the ESD of $W$ satisfies
 \begin{eqnarray*}
 \bigcap_{z\in\underline{S}(C_\zeta)}\left\{\Lambda(z)\leq \varphi^{C_\zeta}\frac{1}{p\eta}\right\}.
 \end{eqnarray*}

 (ii) The individual matrix elements of the Green function satisfy
 \begin{eqnarray*}
 \bigcap_{z\in \underline{S}(C_\zeta)}\left\{\Lambda_o(z)+\Lambda_d(z)\leq \varphi^{C_\zeta}\left(\sqrt{\frac{\Im s_W(z)}{p\eta}}+\frac{1}{p\eta}\right)\right\}.
 \end{eqnarray*}

 (iii) Uniformly in $E\in\mathbb{R}$,
 \begin{eqnarray*}
 |F_p(E)-F_{W}(E)|\leq \varphi^{C_\zeta}p^{-1}.
 \end{eqnarray*}
 \end{lemma}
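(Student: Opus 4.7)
\medskip

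\noindent\textbf{Proof proposal.} Since this lemma is quoted verbatim from \cite{PY}, my plan is to indicate how one would adapt the strong local Marchenko--Pastur law, which Pillai and Yin established for generalized sample covariance matrices, to the correlation-normalized matrix $W=YY^T$ under condition $\mathbf{C_1}$. The overall scheme is the standard Erd\H{o}s--Schlein--Yau--Yin bootstrap: derive an approximate self-consistent equation for $s_p(z)$ via Schur complements, show that its error term is small with $\zeta$-high probability using large deviation estimates for quadratic forms, invoke stability of the limiting MP equation \eqref{2.10} to convert this into a bound on $\Lambda$, and then bootstrap down from $\eta\sim 1$ (where bounds are trivial) to $\eta\sim\varphi^{\zeta}/p$ along a dyadic grid, at each step using Lipschitz continuity in $z$ to upgrade a pointwise bound to a uniform one on $\underline{S}(C_\zeta)$.

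\medskip

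Concretely, I would start from the Schur identity
\[
G_{kk}^{-1}=1-z-\mathbf{y}_k^{T}\mathcal{W}^{(k)}\mathcal{G}^{(k)}(z)\mathbf{y}_k,
\]
and, as in the proof of Theorem \ref{th.2.1}, factor out the normalization by writing $\mathbf{y}_k=\mathbf{x}_k/\|\mathbf{x}_k\|$ and using $n/\|\mathbf{x}_k\|^2=1+O(K^2\log^2 n/\sqrt{n})$ with overwhelming probability. The independence of $\mathbf{x}_k$ from $\mathcal{W}^{(k)}$ would then allow a Hanson--Wright / Lemma~\ref{le.2.5} style concentration of $\mathbf{x}_k^{T}\mathcal{W}^{(k)}\mathcal{G}^{(k)}\mathbf{x}_k$ around its partial trace, with the sharper $O(\sqrt{\Im s_W(z)/(p\eta)}+1/(p\eta))$ fluctuation required by Lemma~\ref{le.4.3}(ii) rather than the cruder bound used for Theorem~\ref{th.2.1}. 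Feeding this into the Schur expansion and interlacing $s_p^{(k)}$ with $s_p$ yields the approximate equation
\[
s_p(z)+\frac{1}{\tfrac{p}{n}+z-1+z\tfrac{p}{n}s_p(z)+\Upsilon(z)}=0,
\]
with $\Upsilon(z)$ controlled by the Green function entries. Part (i) then follows from stability of the MP fixed-point equation away from the spectral edges and a careful edge-stability argument at $z$ near $\lambda_\pm$; part (ii) is extracted from (i) plus the off-diagonal Schur identity $G_{kl}=-G_{kk}G_{ll}^{(k)}\mathbf{y}_k^{T}\mathcal{W}^{(k,l)}\mathcal{G}^{(k,l)}\mathbf{y}_l$ and analogous concentration; part (iii) is the standard Helffer--Sj\"ostrand conversion from the Stieltjes transform bound to the cumulative eigenvalue count, where one integrates (i) against a smoothed indicator on the scale $\varphi^{C_\zeta}/p$.

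\medskip

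The main obstacle, absent in the plain covariance case of \cite{PY}, is that the $k$-th row of $Y$ is \emph{not} independent of $\|\mathbf{x}_k\|$, which couples the quadratic form $\mathbf{y}_k^{T}\mathcal{W}^{(k)}\mathcal{G}^{(k)}\mathbf{y}_k$ with the normalizer. The remedy is the decomposition $\xi_\alpha=(n/\|\mathbf{x}_k\|^2)\tilde\xi_\alpha$ used in \eqref{3.01}, isolating a deterministic-up-to-$O(K^2/\sqrt{n})$ scalar from a genuine projection of the independent vector $\mathbf{x}_k$ onto a subspace independent of it. Provided $K\le p^{1/C_0}$ and $\eta\ge\varphi^{\zeta}/p$, the error $K^2/\sqrt{n}$ is dominated by the target bound $\varphi^{C_\zeta}(\sqrt{\Im s_W/(p\eta)}+1/(p\eta))$, so the normalization contributes only subleading corrections and the Pillai--Yin argument goes through verbatim. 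A secondary, more delicate point is obtaining uniformity in $E$ up to the spectral edges; this requires the edge stability lemma for the MP self-consistent equation, which shows that the map $s\mapsto -1/(\tfrac{p}{n}+z-1+z\tfrac{p}{n}s)$ is a contraction modulo a factor controlled by $\sqrt{\kappa+\eta}$ where $\kappa=\mathrm{dist}(E,[\lambda_-,\lambda_+])$, and is handled exactly as in \cite{PY}.
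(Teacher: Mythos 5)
The paper does not actually prove this lemma: it is imported verbatim as Theorem 1.5 of \cite{PY}, with only the one-line remark preceding it that the extension from covariance to correlation matrices under $\mathbf{C_1}$ ``is not difficult to see.'' Your sketch is a faithful outline of the Pillai--Yin machinery behind the cited result (Schur complement, concentration of the quadratic forms, stability of the self-consistent MP equation near the edge, dyadic bootstrap in $\eta$, the off-diagonal Schur identity for part (ii), and the conversion to the counting function for part (iii)), and you correctly isolate the one genuinely new difficulty --- the coupling between $\mathbf{y}_k$ and $\|\mathbf{x}_k\|$ --- resolving it by the same factorization $n/\|\mathbf{x}_k\|^2=1+O(K^2\log^2 n/\sqrt{n})$ that the paper itself uses in (\ref{3.01}) for the weaker Theorem \ref{th.2.1}. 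So in spirit you are doing exactly what the authors implicitly delegate to \cite{PY}.

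One step of your plan, however, does not close as written. Your domination argument --- that the normalization error $K^2\log^{O(1)}n/\sqrt{n}=n^{-1/2+o(1)}$ is smaller than the target bound --- is valid for the entrywise estimate (ii), since $\varphi^{C_\zeta}\sqrt{\Im s_W(z)/(p\eta)}\gtrsim\varphi^{C_\zeta}p^{-1/2}$ throughout $\underline{S}(C_\zeta)$. But for the averaged law (i) at $\eta$ of order one the target is $\varphi^{C_\zeta}/(p\eta)\sim p^{-1+o(1)}\ll n^{-1/2}$, so a pointwise comparison of the normalization error against the target fails. What saves the averaged bound is that the normalizers $\|\mathbf{x}_1\|,\dots,\|\mathbf{x}_p\|$ are independent across rows: writing $W=DSD$ with $D=I+O(n^{-1/2})$ diagonal and mean-zero fluctuations, the perturbation of $\frac1p Tr\,G$ involves a sum over $k$ of independent centered terms of size $n^{-1/2}$, which after the extra $p^{-1/2}$ gain from this fluctuation averaging contributes only $O(1/p)$. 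You need to build this averaging step into the error analysis of the self-consistent equation (as in the $[Z]$-term estimates of \cite{PY}); without it the claimed precision of part (i) is not reached at moderate and large $\eta$.
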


 We also need the following lemma on $s_W(z)$.
\begin{lemma}\emph{(Lemma 26, \cite{PY})}
Set $\kappa:=\min(|\lambda_+-E|,|E-\lambda_-|)$. For $z=E+i\eta\in \underline{S}(0)$, (see (\ref{4.12})) we have the following relations:
\begin{eqnarray}
|s_W(z)|\sim 1,~~~~~~~~|1-s_W^2(z)|\sim\sqrt{\kappa+\eta}, \label{4.13}
\end{eqnarray}
\begin{eqnarray}
\Im s_W(z)\sim\left\{
\begin{array}{lll}
\frac{\eta}{\sqrt{\kappa+\eta}} &~~~~~~if~~~\kappa\geq\eta~~~and~~~|E|\not\in[\lambda_-,\lambda_+]\\\\
\sqrt{\kappa+\eta} &~~~~~~if~~~\kappa\leq\eta~~~and~~~|E|\in[\lambda_-,\lambda_+]
\end{array}\right.  \label{4.14}
\end{eqnarray}
where $A\sim B$ means $C^{-1}B\leq A\leq CB$ for some constant $C$. Furthermore
\begin{eqnarray*}
\frac{\Im s_W(z)}{p\eta}\geq O(\frac1p)~~~~ and~~~~\partial_\eta\frac{\Im s_W(z)}{\eta}\leq 0.
\end{eqnarray*}
\end{lemma}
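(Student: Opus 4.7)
The lemma is purely deterministic, concerning the Stieltjes transform $s_W$ of the nonasymptotic Marchenko-Pastur density $\rho_W$, so the whole argument reduces to explicit analysis of the algebraic solution of the self-consistent equation satisfied by $s_W$. The plan is to solve the quadratic $(p/n)z\,s_W^2 + ((p/n)+z-1)\,s_W + 1 = 0$ and select the branch with $\Im s_W(z) > 0$ whenever $\Im z > 0$; since the discriminant factorizes as $(p/n+z-1)^2 - 4(p/n)z = (z-\lambda_+)(z-\lambda_-)$, every estimate in the lemma is ultimately controlled by the size and argument of $\sqrt{(z-\lambda_+)(z-\lambda_-)}$. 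The bound $|s_W(z)| \sim 1$ follows directly by contradiction from the quadratic: if $|s_W|$ were too large then $(p/n)z\,s_W^2$ could not be cancelled (using $|z|\lesssim 1$ on $\underline{S}(0)$), and if $|s_W|$ were too small the constant term $1$ would be uncancelled. For $|1-s_W^2(z)| \sim \sqrt{\kappa+\eta}$, I would use the identity $2(p/n)z\,s_W(z) + (p/n+z-1) = \pm\sqrt{(z-\lambda_+)(z-\lambda_-)}$ together with the elementary estimate $|\sqrt{(z-\lambda_+)(z-\lambda_-)}|\sim\sqrt{\kappa+\eta}$ on $\underline{S}(0)$, and then rewrite $1-s_W^2$ by combining this identity with $s_W\bigl((p/n)z\,s_W + (p/n+z-1)\bigr) = -1$ and the already-established $|s_W|\sim 1$.

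For the two-regime bound on $\Im s_W(z)$, I would take the imaginary part of the explicit formula. When $\kappa \geq \eta$ and $E\notin[\lambda_-,\lambda_+]$, the radicand has a real dominant part of order $\kappa$ and an imaginary perturbation of order $\eta\sqrt{\kappa}$, so the imaginary part of its square root is of order $\eta/\sqrt{\kappa+\eta}$; dividing by $|2(p/n)z|\sim 1$ matches the first alternative. When $\kappa\leq\eta$ and $E\in[\lambda_-,\lambda_+]$, the radicand is nearly imaginary of size $\kappa+\eta$, so its square root has modulus $\sim\sqrt{\kappa+\eta}$, producing the second alternative. This is the step with the most bookkeeping, since one must carefully identify which branch of the complex square root is selected in each regime and verify that the constants align to give a single clean $\sqrt{\kappa+\eta}$ scaling across the crossover at $\kappa\sim\eta$.

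The final two assertions rely on the integral representation $\Im s_W(z)/\eta = \int \rho_W(x)/((x-E)^2+\eta^2)\,dx$. Monotonicity $\partial_\eta(\Im s_W/\eta)\leq 0$ is immediate because the integrand is decreasing in $\eta$. For the lower bound $\Im s_W/(p\eta)\geq O(1/p)$, equivalently $\Im s_W/\eta \gtrsim 1$, I would restrict the integration to a fixed subinterval of $[\lambda_-,\lambda_+]$ on which $\rho_W$ stays bounded below, and use that $(x-E)^2 + \eta^2 = O(1)$ uniformly for $z\in\underline{S}(0)$ since $|E|\leq 5\lambda_+$ and $\eta\leq 10(1+p/n)$. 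The main technical obstacle is not conceptual but the clean matching of the $\sqrt{\kappa+\eta}$ scaling across the edge transition, where one must carry the complex square root through the two regimes without losing the $\sim$-equivalence of the constants.
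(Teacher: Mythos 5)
The paper does not actually prove this lemma --- it is quoted verbatim as ``Lemma 26'' of \cite{PY} and used as a black box --- so there is no internal proof to compare against. Your plan (solve the self-consistent quadratic, pick the branch with $\Im s_W>0$, and read everything off the size and argument of $\sqrt{(z-\lambda_+)(z-\lambda_-)}$, plus the integral representation for the last two assertions) is exactly the standard argument behind the cited result, and the parts concerning $|s_W|\sim 1$, the two-regime estimate for $\Im s_W$, the lower bound $\Im s_W/\eta\gtrsim 1$, and the monotonicity in $\eta$ all go through as you describe. One small patch: your contradiction argument for $|s_W|\sim 1$ degenerates near $z=0$ (which $\underline{S}(0)$ permits, e.g.\ $E=0$, $\eta=p^{-1}$), since there the quadratic term cannot ``fail to be cancelled''; for such $z$ use the integral representation directly, noting that the support of $\rho_W$ is bounded away from $0$ because $y<1$.

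The genuine gap is in the step for $|1-s_W^2(z)|\sim\sqrt{\kappa+\eta}$. Carry out the combination you propose: from $s_W\bigl(\tfrac pn z\,s_W+\tfrac pn+z-1\bigr)=-1$ one gets $\tfrac pn+z-1=-s_W^{-1}-\tfrac pn z\,s_W$, and substituting into $2\tfrac pn z\,s_W+(\tfrac pn+z-1)=\pm\sqrt{(z-\lambda_+)(z-\lambda_-)}$ yields $\tfrac pn z\,s_W-s_W^{-1}=\pm\sqrt{(z-\lambda_+)(z-\lambda_-)}$, i.e.\ $|1-\tfrac pn z\,s_W^2(z)|=|s_W(z)|\,\bigl|\sqrt{(z-\lambda_+)(z-\lambda_-)}\bigr|\sim\sqrt{\kappa+\eta}$. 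Your method therefore controls $1-\tfrac pn z\,s_W^2$, not $1-s_W^2$, and the two are not interchangeable: at $z=\lambda_+$ one computes $s_W(\lambda_+)=-\bigl(\sqrt{p/n}\,(1+\sqrt{p/n})\bigr)^{-1}$, so $s_W(\lambda_+)^2\neq 1$ for generic $p/n$ while $\sqrt{\kappa+\eta}\to 0$; the relation as printed fails at the edge. (The printed form reflects the different normalization in \cite{PY}, where $\lambda_\pm=\sqrt d+1/\sqrt d\pm 2$ and the factor $\tfrac pn z$ is absorbed; the quantity that vanishes like $\sqrt{\kappa+\eta}$ at the edges, and the one needed for stability of the self-consistent equation, is $1-\tfrac pn z\,s_W^2$.) So either prove the corrected statement or flag the transcription; as written, your claimed derivation of $|1-s_W^2|\sim\sqrt{\kappa+\eta}$ cannot close. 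This does not affect the rest of the paper, which only ever uses $|s_W(z)|=O(1)$ from \eqref{4.13} and the estimate \eqref{4.14}.
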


Now we set $Y^{\mathbf{v}}=(\mathbf{y}_{ij}^{\mathbf{v}}):=(x_{ij}^{\mathbf{v}}/||\mathbf{x}_i^{\mathbf{v}}||)_{p, n}$, with elements $x_{ij}^{\mathbf{v}}$ satisfying our basic condition $\mathbf{C_1}$. Correspondingly we let $W^{\mathbf{v}}=Y^{\mathbf{v}}Y^{\mathbf{v}T}$, $G^{\mathbf{v}}(z)=(W^{\mathbf{v}}-z)^{-1}$ and $s_p^{\mathbf{v}}(z)=\frac1pTrG^{\mathbf{v}}(z)$. Define the matrix $W^{\mathbf{w}}$, the Green function $G^{\mathbf{w}}(z)$ and the Stieltjes transform $s_p^{\mathbf{w}}(z)$ analogously for another random sequence $\{x_{ij}^{\mathbf{w}}\}$ satisfying $\mathbf{C}_1$ which is independent of $\{x_{ij}^{\mathbf{v}}\}$. The aim in this section is to prove the following Green function comparison theorem.

Below we only state the results and proofs for the largest eigenvalue. The smallest one is just analogous.
\begin{theorem} \label{th.4.1}\emph{(Green function comparison theorem on the edge)}. Let $F: \mathbb{R}\rightarrow\mathbb{R}$ be a function whose derivatives $F^{(\alpha)}$ satisfy
\begin{eqnarray*}
\max_{x}|F^{(\alpha)}(x)|(|x|+1)^{-C_1}\leq C_1,~~~~\alpha=1,2,3,4
\end{eqnarray*}
with some constant $C_1>0$. Then there exists $\epsilon_0>0$ depending only on $C_1$ such that for any $\epsilon<\epsilon_0$ and for any real numbers $E, E_1$ and $E_2$ satisfying
\begin{eqnarray*}
|E-\lambda_+|\leq p^{-2/3+\epsilon},~~~|E_1-\lambda_+|\leq p^{-2/3+\epsilon},~~~ |E_2-\lambda_+|\leq p^{-2/3+\epsilon},
\end{eqnarray*}
and $\eta=p^{-2/3-\epsilon}$, we have
\begin{eqnarray}
\left|\mathbb{E}^{\mathbf{v}}F(p\eta\Im s_p^{\mathbf{v}}(z))
-\mathbb{E}^{\mathbf{w}}F(p\eta\Im s_p^{\mathbf{w}}(z))\right|\leq Cp^{-1/6+C\epsilon},~~~z=E+i\eta,\label{6.111}
\end{eqnarray}
and
\begin{equation}
\left|\mathbb{E}^{\mathbf{v}}F\big(p\int_{E_1}^{E_2}dx \Im s_p^{\mathbf{v}}(x+i\eta)\big)-\mathbb{E}^{\mathbf{w}}F\big(p\int_{E_1}^{E_2}dx \Im s_p^{\mathbf{w}}(x+i\eta)\big)\right|\leq Cp^{-1/6+C\epsilon}  \label{6.112}
\end{equation}
for some constant $C$ and large enough $p$.
\end{theorem}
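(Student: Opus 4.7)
The plan is to follow the Lindeberg-style Green function comparison template of Erd\H{o}s-Yau-Yin, adapted to the correlation matrix setting. The crucial observation is that the entries of $Y$ are not independent, but the underlying variables $x_{ij}$ \emph{are}, so the swaps must be performed at the level of $X$ with the full induced change in $Y$ tracked carefully. I will enumerate the $pn$ pairs $(i,j)$ and build an interpolating chain in which $x_{ij}^{\mathbf{v}}$ is replaced by $x_{ij}^{\mathbf{w}}$ one at a time, giving a telescoping sum of $pn$ per-swap differences. It then suffices to show that each per-swap difference is $O(p^{-7/6+C\epsilon})$ with overwhelming probability.

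Fix a swap at position $(i,j)$ and regard $u = x_{ij}$ as a scalar parameter with every other entry frozen. The reference configuration $u = 0$ is convenient because $\|\mathbf{x}_i^{(0)}\|^2 = \sum_{k\neq j} x_{ik}^2$ still satisfies $\|\mathbf{x}_i^{(0)}\|^2/n = 1 + o(1)$, so $\mathbf{y}_i^{(0)}$ remains a unit vector orthogonal to $\mathbf{e}_j$. A general $u$ admits the clean parametrization
\begin{equation*}
\mathbf{y}_i^{(u)} = \alpha_u\, \mathbf{y}_i^{(0)} + \beta_u\, \mathbf{e}_j, \qquad \alpha_u = \frac{\rho}{\sqrt{\rho^2+u^2}},\quad \beta_u = \frac{u}{\sqrt{\rho^2+u^2}},
\end{equation*}
with $\rho = \|\mathbf{x}_i^{(0)}\|$, so that $\beta_u = u/\rho + O(u^3 n^{-3/2})$ and $\alpha_u - 1 = -u^2/(2\rho^2) + O(u^4 n^{-2})$. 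Since $W_{ii}^{(u)} = 1$ for every $u$, the perturbation $\Delta W^{(u)} := W^{(u)} - W^{(0)}$ vanishes on the diagonal entry $(i,i)$ and is supported only on the $i$-th row and column, admitting the rank-two representation $\Delta W^{(u)} = \mathbf{e}_i (\mathbf{z}^{(u)})^T + \mathbf{z}^{(u)} \mathbf{e}_i^T$ with $z_l^{(u)} = (\alpha_u - 1)W^{(0)}_{il} + \beta_u\, Y_{lj}$ for $l \neq i$ and $z_i^{(u)} = 0$.

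Next I would expand the resolvent as $G^{(u)} = \sum_{k \geq 0}(-1)^k G^{(0)}(\Delta W^{(u)} G^{(0)})^k$, substitute into $\phi(u) := p\eta\, \Im s_p^{(u)}(z)$, and Taylor-expand $F(\phi(u))$ through order $u^4$. Each factor of $\Delta W^{(u)}$ contributes either a $\beta_u \sim u/\sqrt{n}$ (from the new column direction $Y_{\cdot j}$) or an $(\alpha_u - 1) \sim u^2/n$ (from the row-coupling term), so the derivatives $d^k F(\phi)/du^k|_{u=0}$ scale as $n^{-k/2}$ times products of Green function entries. Condition $\mathbf{C}_1$ forces the first three moments of the $\mathbf{v}$- and $\mathbf{w}$-distributions to agree (symmetry kills the third), so the $u$, $u^2$, and $u^3$ contributions cancel in the per-swap difference and only the $u^4$ remainder survives. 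This remainder is controlled via Lemma 4.3, which yields $|G^{(0)}_{ab}| = O(1)$ on the diagonal and $|G^{(0)}_{ab}| = O(\varphi^{C_\zeta} p^{-1/3+C\epsilon})$ off-diagonal at the edge scale $\eta = p^{-2/3-\epsilon}$, combined with the delocalization of right singular vectors (Theorem 3.2), which bounds the row-type sums $\sum_l G^{(0)}_{al} Y_{lj}$ that arise when the rank-two perturbation is contracted against Green function rows. A careful accounting yields a per-swap error of order $p^{-7/6+C\epsilon}$, and summing over the $pn$ swaps produces the advertised bound. The assertion (\ref{6.112}) follows from the same expansion after integrating $\Im s_p^{(u)}(x+i\eta)$ over the window $[E_1,E_2]$ of length $p^{-2/3+\epsilon}$, as the estimates of Lemma 4.3 are uniform over this window.

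The principal obstacle that distinguishes the correlation matrix case from the covariance matrix case is precisely this coupling between $x_{ij}$ and the entire $i$-th row of $Y$ through the normalization: the $\beta_u$-part of $\Delta W^{(u)}$ is essentially the same perturbation as in the covariance case and can be handled by the techniques of \cite{PY}, but the genuinely new cross-terms $(\alpha_u - 1)W^{(0)}_{il}$ multiply whole rows of $W^{(0)}$ and only admit useful $\ell^\infty$ bounds through the uniform delocalization of the right singular vectors supplied by Theorem 3.2. This is why Theorem 3.2 had to be proved in this paper rather than borrowed directly from the existing literature on sample covariance matrices.
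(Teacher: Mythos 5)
Your proposal diverges from the paper's actual argument in a structural way, and as written it contains a counting error that prevents the telescoping sum from closing. You build an interpolating chain over all $pn$ entry positions $(i,j)$ and assert that it suffices to prove a per-swap error of $O(p^{-7/6+C\epsilon})$. But summing $pn\asymp p^{2}$ terms each of size $p^{-7/6+C\epsilon}$ gives a total of order $p^{5/6+C\epsilon}$, which is far larger than the claimed bound $p^{-1/6+C\epsilon}$. For an entry-by-entry Lindeberg scheme you would need each swap to contribute $O(p^{-13/6+C\epsilon})$, and your sketch gives no mechanism for that. In particular, since $\mathbf{C}_1$ only guarantees matching of the first two moments (with the third vanishing by symmetry), the fourth-order term in your Taylor expansion in $u$ genuinely survives with coefficient $\mathbb{E}[u^4]$, which differs between the two ensembles; you would then have to show that this coefficient --- a contraction of four factors of $\Delta W$ against Green functions, each factor carrying only $\beta_u\sim u/\sqrt{n}$ --- is of size $p^{-13/6+C\epsilon}$ rather than the $n^{-2}\cdot p^{C\epsilon}$ that the naive power counting suggests. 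This extra gain of $p^{-1/6}$ is exactly the delicate point of edge comparison theorems, and it is absent from your outline.

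The paper avoids this difficulty by telescoping over only $p$ steps: it replaces an entire row of $Y$ at each step (exploiting that the rows of $Y$ are mutually independent even though the entries within a row are coupled by the normalization $\|\mathbf{x}_i\|$). The per-step difference is analyzed through the Schur complement identity $\mathrm{Tr}\,G-\mathrm{Tr}\,G^{(1)}+z^{-1}=zG_{11}\,\mathbf{y}_1^T(\mathcal{G}^{(1)})^2\mathbf{y}_1$ and an expansion of $G_{11}$ around $s_W(z)$; the resulting expectation is shown (Lemma 4.1) to equal a functional of $Y^{(1)}$ and the first two moments only, up to $O(p^{-7/6+C\epsilon})$, which with $p$ steps yields the theorem. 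The correlation structure enters through the computation of $\mathbb{E}_1\prod_{i}x_{1k_i}/\|\mathbf{x}_1\|$, where symmetry kills odd index patterns and the i.i.d.\ structure within the row reduces the even ones to $m_2$ up to negligible errors. Your intuition about why Theorem 3.2 (delocalization of right singular vectors) is indispensable is in the right spirit, but the precise use in the paper is to bound the entries $(\mathcal{G}^{(1)})_{ij}$ and $((\mathcal{G}^{(1)})^2)_{ij}$ of the companion resolvent via its spectral decomposition (Lemma 4.2), not to control cross-terms in a rank-two entry perturbation. To salvage your route you would need to either sharpen every per-entry estimate by a factor of $p^{-1}$ (establishing the $p^{-13/6+C\epsilon}$ bound, including the fourth-order term where the moments do not match), or switch to the row-replacement scheme.
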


\begin{proof}[Proof of Theorem \ref{th.4.1}] The proof is similar to that of Theorem 6.3 of \cite{PY}. Moreover, the proof of (\ref{6.112}) can be taken in a same manner as that of (\ref{6.111}), so we will just present the proof for (\ref{6.111}) below. The basic strategy is to estimate the successive difference of matrices which differ by a row. For $1\leq\gamma\leq p$, we denote by $Y_{\gamma}$ the random matrix whose $j$-th row is the same as that of $Y^{\mathbf{v}}$ if $j\leq\gamma$ and that of $Y^{\mathbf{w}}$ otherwise; in particular $Y_0=Y^{\mathbf{v}}$ and $Y_p=Y^{\mathbf{w}}$. And we set
\begin{eqnarray*}
W_\gamma=Y_\gamma Y^T_\gamma.
\end{eqnarray*}
We shall compare $W_{\gamma-1}$ with $W_\gamma$ by using the following lemma. For simplicity, we denote
\begin{eqnarray*}
s_p^{(i)}(z)=\frac1pTr G^{(i)}(z),~~~\tilde{s}_p^{(i)}(z)=s_p^{(i)}(z)-\frac{1}{pz}.
\end{eqnarray*}
\begin{lemma} \label{le.4.1}
For any sample correlation matrix $W$ with elements satisfying the basic assumption $\mathbf{C_1}$, if $|E-\lambda_+|\leq p^{-2/3+\epsilon}$ and $p^{-2/3}\gg\eta\gg p^{-2/3-\epsilon}$ for some $\epsilon>0$, then we have
\begin{eqnarray*}
\mathbb{E}F(p\eta\Im s_p(z))-\mathbb{E}F(p\eta\Im \tilde{s}_p^{(i)}(z))=A(Y^{(i)},m_1,m_2)+p^{-7/6+C\epsilon}
\end{eqnarray*}
where the functional $A(Y^{(i)}, m_1,m_2)$ only depends on the distribution of $Y^{(i)}$ and the first two moments $m_1, m_2$ of $x_{ij}$.
\end{lemma}
\begin{remark} We always assume $m_1=0$, $m_2=1$ in our case.
\end{remark}
Note that
\begin{eqnarray*}
W_{\gamma-1}^{(\gamma)}=W_{\gamma}^{(\gamma)},
\end{eqnarray*}
thus Lemma \ref{le.4.1} implies that
\begin{eqnarray*}
\mathbb{E}F\left(\eta\Im Tr(W_{\gamma-1}-z)^{-1}\right)-\mathbb{E}F\left(\eta\Im Tr(W_{\gamma}-z)^{-1}\right)=p^{-7/6+C\epsilon}.
\end{eqnarray*}
Then the proof of Theorem \ref{th.4.1} can be completed by the telescoping argument.

Therefore it suffices to prove Lemma \ref{le.4.1} in the sequel. To do this, we need to provide some bounds about $\mathcal{G}^{(i)}$. We only state the result for $i=1$ as the following lemma since the others are analogous.
\begin{lemma} \label{le.4.2} Under the assumptions in Lemma \ref{le.4.1}, we have for $\epsilon>0$ small enough,
\begin{eqnarray}
|\mathbf{y}_1^T(\mathcal{G}^{(1)})^2\mathbf{y}_1|\leq p^{1/3+C\epsilon} \label{4.6}
\end{eqnarray}
and
\begin{eqnarray}
|(\mathcal{G}^{(1)})_{ij}|\leq p^{C\epsilon},~~~|((\mathcal{G}^{(1)})^2)_{ij}|\leq p^{1/3+C\epsilon} \label{4.7}
\end{eqnarray}
hold with overwhelming probability.
\end{lemma}
The proof of Lemma \ref{le.4.2} will be postponed to the end of this section. Now we begin to prove Lemma \ref{le.4.1} assuming Lemma \ref{le.4.2}.
\begin{proof}[Proof of Lemma \ref{le.4.1}] The proof is in a similar manner to that of Lemma 6.5 in \cite{PY}.
At first we rewrite (\ref{2.1}) as
\begin{eqnarray}
G_{11}=\frac{1}{-z-z\mathbf{y}_1^T\mathcal{G}^{(1)}(z)\mathbf{y}_1} \label{4.9}
\end{eqnarray}
by using the facts that
\begin{eqnarray*}
\mathcal{W}^{(1)}\mathcal{G}^{(1)}(z)=I+z\mathcal{G}^{(1)}(z),~~~\mathbf{y}_1^T\mathbf{y}_1=1.
\end{eqnarray*}
Moreover, by Schur's complement, we also have
\begin{eqnarray}
TrG-TrG^{(1)}=G_{11}+\frac{\mathbf{y}_1^TY^{(1)T}(G^{(1)})^2Y^{(1)}\mathbf{y}_1}{-z-z\mathbf{y}_1^T\mathcal{G}^{(1)}(z)\mathbf{y}_1},\label{4.10}
\end{eqnarray}
Inserting (\ref{4.9}) and the identity
\begin{eqnarray*}
Y^{(1)T}(G^{(1)})^2Y^{(1)}=\mathcal{W}^{(1)}(\mathcal{G}^{(1)})^2=\mathcal{G}^{(1)}+z(\mathcal{G}^{(1)})^2
\end{eqnarray*}
into (\ref{4.10}) we can get
\begin{eqnarray}
TrG-TrG^{(1)}+z^{-1}=zG_{11}(\mathbf{y}_1^T(\mathcal{G}^{(1)})^2(z)\mathbf{y}_1).\label{4.15}
\end{eqnarray}

Now we define the quantity $B$ as
\begin{eqnarray*}
B=-zs_W(z)\left[\mathbf{y}_1^T\mathcal{G}^{(1)}(z)\mathbf{y}_1-\left(\frac{-1}{zs_W(z)}-1\right)\right].
\end{eqnarray*}
Thus by (\ref{4.9}) we have
\begin{eqnarray*}
B=-zs_W(z)\left[\left(\frac{-1}{zG_{11}(z)}-1\right)-\left(\frac{-1}{zs_W(z)}-1\right)\right]=\frac{s_W(z)-G_{11}}{G_{11}}.
\end{eqnarray*}
By $(ii)$ of Lemma \ref{le.4.3} and (\ref{4.14}) we can get
\begin{eqnarray*}
|B|\leq p^{-1/3+2\epsilon}\ll 1
\end{eqnarray*}
with overwhelming probability. Thus we have the expansion
\begin{eqnarray}
G_{11}=\frac{s_W(z)}{B+1}=s_W(z)\sum_{k\geq 0}(-B)^{k}.\label{4.16}
\end{eqnarray}

Now we set
\begin{eqnarray*}
y:=\eta(Tr G-TrG^{(1)}+z^{-1}).
\end{eqnarray*}
It follows from (\ref{4.15}) and (\ref{4.16}) that
\begin{eqnarray*}
y=\eta z G_{11}\mathbf{y}_1^T(\mathcal{G}^{(1)})^2\mathbf{y}_1=\sum_{k=1}^{\infty}y_k,
\end{eqnarray*}
where
\begin{eqnarray*}
y_k:=\eta z s_W(z)(-B)^{k-1}\mathbf{y}_1^T(\mathcal{G}^{(1)})^2\mathbf{y}_1.
\end{eqnarray*}

Since $z$ and $s_W(z)$ are $O(1)$ by (\ref{4.13}), by definitions and Lemma \ref{le.4.2}, we have
\begin{eqnarray}
|y_k|\leq O(p^{-k/3+C\epsilon})~~~~\mathrm{and}~~~~|y|\leq O(p^{-1/3+C\epsilon})\label{4.19}
\end{eqnarray}
with overwhelming probability. Thus we have
\begin{eqnarray*}
&&F(p\eta\Im s_p(z))-F(p\eta\Im\tilde{s}_p^{(1)}(z))\nonumber\\
&&=\sum_{k=1}^3\frac{1}{k!}F^{(k)}(p\eta\Im\tilde{s}_p^{(1)}(z))(\Im y)^k+O(p^{-4/3+C\epsilon})
\end{eqnarray*}
with overwhelming probability.

Similarly to the counterpart  proof of Lemma 6.5 in \cite{PY}, we only need to show
\begin{eqnarray}
\quad \quad \mathbb{E}F^{(k)}(p\eta\Im\tilde{s}_p^{(1)}(z))(\Im y)^k=A_k(Y^{(1)},m_1,m_2)+O(p^{-4/3+C\epsilon}),~~~~k=1,2,3 \label{4.18}
\end{eqnarray}
with some functional $A_k$ only depending on the distribution of $Y^{(1)}$, $m_1$ and $m_2$.

Since the proof of (\ref{4.18}) is similar to the counterpart in \cite{PY}, we will only state the proof for $k=3$ below. We use $\mathbb{E}_1$ to denote the expectation with respect to $\mathbf{y}_1$ in the sequel. By using (\ref{4.19}) we obtain
\begin{eqnarray}
F^{(3)}(p\eta\Im\tilde{s}_p^{(1)}(z))(\Im y)^3=F^{(3)}(p\eta\Im\tilde{s}_p^{(1)}(z))(\Im y_1)^3+O(p^{-4/3+C\epsilon}) \label{4.22}
\end{eqnarray}
with overwhelming probability.
If we write $r_1=\Re(\eta z s_W(z)), r_2=\Im(\eta z s_W(z))$, then we have
\begin{eqnarray}
\mathbb{E}_1(\Im y_1)^3 &=&\mathbb{E}_1r_1^3(\Im(\mathbf{y}_1^T(\mathcal{G}^{(1)})^2\mathbf{y}_1))^3+\mathbb{E}_1r_2^3(\Re(\mathbf{y}_1^T(\mathcal{G}^{(1)})^2\mathbf{y}_1))^3\nonumber\\
&&+3\mathbb{E}_1r_1^2r_2(\Im(\mathbf{y}_1^T(\mathcal{G}^{(1)})^2\mathbf{y}_1))^2(\Re(\mathbf{y}_1^T(\mathcal{G}^{(1)})^2\mathbf{y}_1))\nonumber\\
&&+3\mathbb{E}_1r_1r_2^2(\Im(\mathbf{y}_1^T(\mathcal{G}^{(1)})^2\mathbf{y}_1))(\Re(\mathbf{y}_1^T(\mathcal{G}^{(1)})^2\mathbf{y}_1))^2\nonumber\\
&=&r_1^3\sum_{k_1,\cdots,k_6}\mathbb{E}_1(\prod_{i=1}^6\frac{x_{1k_i}}
{||\mathbf{x}_1||})\prod_{i=1}^{3}\Im\big((\mathcal{G}^{(1)})^2\big)_{k_{2i-1},k_{2i}}\nonumber\\
&&+r_2^3\sum_{k_1,\cdots,k_6}\mathbb{E}_1(\prod_{i=1}^6\frac{x_{1k_i}}
{||\mathbf{x}_1||})\prod_{i=1}^{3}\Re\big((\mathcal{G}^{(1)})^2\big)_{k_{2i-1},k_{2i}}\nonumber\\
&&+3r_1r_2^2\mathbb{E}_1(\prod_{i=1}^6\frac{x_{1k_i}}
{||\mathbf{x}_1||})\prod_{i=1}^{2}\Re\big((\mathcal{G}^{(1)})^2\big)_{k_{2i-1},k_{2i}}\Im\big((\mathcal{G}^{(1)})^2\big)_{k_5,k_6}\nonumber\\
&&+3r_1^2r_2\mathbb{E}_1(\prod_{i=1}^6\frac{x_{1k_i}}
{||\mathbf{x}_1||})\prod_{i=1}^{2}\Im\big((\mathcal{G}^{(1)})^2\big)_{k_{2i-1},k_{2i}}\Re\big((\mathcal{G}^{(1)})^2\big)_{k_5,k_6}. \label{4.17}
\end{eqnarray}

 Notice that if there exists a $k_i$ which appears only once in the above product, then by the assumption that $x_{ij}$ is symmetric, we have
 \begin{eqnarray}
 \mathbb{E}_1(\prod_{i=1}^6\frac{x_{1k_i}}{||\mathbf{x}_1||})=0=m_1.\label{4.20}
 \end{eqnarray}
 So we consider the case where $k_i$ appears exactly twice.
Firstly, we consider
 \begin{eqnarray*}
||\mathbf{x}_1||^6=\sum_{k_1,k_2,k_3}x_{1k_1}^2x_{1k_2}^2x_{1k_3}^2:=\sum_{(1)}x_{1k_1}^2x_{1k_2}^2x_{1k_3}^2+\sum_{(2)}x_{1k_1}^2x_{1k_2}^2x_{1k_3}^2,
 \end{eqnarray*}
 where the first summation goes through the indices $k_1,k_2,k_3$ such that they are not equal to each other, and the second summation goes through the left part of the indices. Then it is not difficult to see the number of the terms in the second summation is of the order $O(n^2)$. By the exponential tail assumption and the Hoeffding inequality, we can see
 \begin{eqnarray*}
 \mathbb{E}_1\sum_{(2)}\frac{x_{1k_1}^2x_{1k_2}^2x_{1k_3}^2}{||\mathbf{x}_1||^6}=O(\frac{\log^{O(1)}n}{n}).
 \end{eqnarray*}
 Furthermore, since $x_{11},\cdots,x_{1n}$ are i.i.d., we have for $k_1,k_2,k_3$ not equal to each other
 \begin{equation}
\quad \quad \mathbb{E}_1\frac{x_{1k_1}^2x_{1k_2}^2x_{1k_3}^2}{||\mathbf{x}_1||^6}
 =\frac{1}{n(n-1)(n-2)}(1-O(\frac{\log^{O(1)}n}{n}))=\frac{m_2^3}{n^3}+O(\frac{\log^{O(1)}n}{n^4}). \label{4.21}
 \end{equation}
 Therefore by (\ref{4.17}), (\ref{4.20}), (\ref{4.21}) and the fact that $\mathcal{G}^{(1)}$ only depends on $Y^{(1)}$, we have
 \begin{eqnarray}
 &&|\mathbb{E}_1(\Im y_1)^3-\tilde{A}_3(Y^{(1)},m_1, m_2)|\nonumber\\
 &\leq&\frac{\log^{O(1)}n}{n^4}|\eta z s_W(z)|^3\sum_{(3)}|[(\mathcal{G}^{(1)})^2]_{k_{1},k_{2}}[(\mathcal{G}^{(1)})^2]_{k_{3},k_{4}}[(\mathcal{G}^{(1)})^2]_{k_{5},k_{6}}|\nonumber\\
 &&+C|\eta z s_W(z)|^3\sum_{(4),(5)}\mathbb{E}_1\bigg{|}\prod_{i=1}^6\frac{x_{1k_i}}{||\mathbf{x}_1||}\bigg{|}\cdot
 |[(\mathcal{G}^{(1)})^2]_{k_{1},k_{2}}[(\mathcal{G}^{(1)})^2]_{k_{3},k_{4}}[(\mathcal{G}^{(1)})^2]_{k_{5},k_{6}}|
 \label{4.5}
 \end{eqnarray}
  with some functional $\tilde{A}_3$ only depending on the distribution of $Y^{(1)}$, $m_1$ and $m_2$. Here the first summation $\sum_{(3)}$ in (\ref{4.5}) goes through the terms such that each $k_i,i=1,\cdots,6$ appears exactly twice. It is easy to see that there are $O(n^3)$ such terms totally. And the second summation goes through the terms such that $(4)$ no $k_i$ appears only once and $(5)$ at least one $k_i$ appears three times. Thus we have the total number of the terms in the second summation is of the order $O(n^2)$. Then by using Lemma \ref{le.4.2} and the fact
  \begin{eqnarray*}
  \mathbb{E}_1\bigg{|}\prod_{i=1}^6\frac{x_{1k_i}}{||\mathbf{x}_1||}\bigg{|}=O(\frac{\log^{O(1)}n}{n^3}),
  \end{eqnarray*}
  we have
 \begin{eqnarray}
 \mathbb{E}_1(\Im y_1)^3=\tilde{A}_3(Y^{(1)},m_1,m_2)+O(p^{-2+C\epsilon}) \label{4.23}
 \end{eqnarray}
 By inserting (\ref{4.23}) into (\ref{4.22}), we can get (\ref{4.18}) for $k=3$. The cases of $k=1$ and $k=2$ can be proved similarly by inserting Lemma \ref{le.4.2}. So we conclude the proof.
\end{proof}

Now we begin to prove Lemma \ref{le.4.2}.
\begin{proof}[Proof of Lemma \ref{le.4.2}]
The proof of (\ref{4.6}) is the same as the counterpart in \cite{PY}, (see (6.36) of \cite{PY}). So we only state the proof of (\ref{4.7}) below. For the ease of the presentation, we  prove (\ref{4.7}) for $\mathcal{G}=(\mathcal{W}-z)^{-1}:=(Y^TY-z)^{-1}$ instead of $\mathcal{G}^{(1)}$. By the spectral decomposition, we have
\begin{eqnarray*}
\mathcal{G}^\alpha=\sum_{k=1}^p\frac{1}{(\lambda_k-z)^\alpha}u_ku_k^T+\frac{1}{(-z)^\alpha}P,~~~~~\alpha=1,2,
\end{eqnarray*}
where the projection $P=I-\sum_{k=1}^pu_ku_k^T$. Consequently, we have
\begin{eqnarray*}
(\mathcal{G}^\alpha)_{ij}=\sum_{k=1}^p\frac{1}{(\lambda_k-z)^\alpha}u_{ki}u_{kj}+\frac{1}{(-z)^\alpha}P_{ij}.
\end{eqnarray*}
Note that $|P_{ij}|\leq 1,|z|\geq \lambda_+/2$. By the delocalization property of $u_k$ in Theorem \ref{th.2.2} one has
\begin{eqnarray*}
|(\mathcal{G}^\alpha)_{ij}|\leq\frac{\log^{O(1)}p}{p}\sum_{k=1}^p\frac{1}{|\lambda_k-z|^\alpha}+C
\end{eqnarray*}
 with overwhelming probability. For $\alpha=2$, by using $i)$ of Lemma \ref{le.4.3} and (\ref{4.14}) we have
\begin{eqnarray*}
\sum_{k=1}^p\frac{1}{|\lambda_k-z|^2}=p\eta^{-1}\Im s_p(z)\leq p^{\epsilon}p\eta^{-1}\frac{1}{p\eta}\leq p^{4/3+C\epsilon},
\end{eqnarray*}
which implies
\begin{eqnarray*}
|(\mathcal{G}^2)_{ij}|\leq p^{1/3+C\epsilon}.
\end{eqnarray*}
For $\alpha=1$, we have
\begin{eqnarray*}
\sum_{k=1}^p\frac{1}{|\lambda_k-z|}=p\int\frac{1}{|x-z|}dF_p(x).
\end{eqnarray*}
Observe that
\begin{eqnarray*}
|p\int\frac{1}{|x-z|}dF_p(x)-p\int\frac{1}{|x-z|}dF_{W}(x)|\leq Cp \int\frac{|F_p(x)-F_{W}(x)|}{|x-z|^2}dx\leq \eta^{-1}p^{C\epsilon}
\end{eqnarray*}
with overwhelming probability. Here we used $(iii)$ of Lemma \ref{le.4.3} in the last inequality. Consequently, we have
\begin{eqnarray*}
|\mathcal{G}_{ij}|\leq (\log^{O(1)}p)\int\frac{1}{|x-z|}dF_{W}(x)+C.
\end{eqnarray*}
It remains to estimate $\int\frac{1}{|x-z|}dF_{W}(x)$.  For $E<\lambda_+$ such that $\lambda_+-E\leq p^{-2/3+\epsilon}$
\begin{eqnarray*}
\int\frac{1}{|x-z|}dF_{W}(x)=\left(\int_{\lambda_-}^{2E-\lambda_+}+\int_{2E-\lambda_+}^{\lambda_+}\right)\frac{1}{\sqrt{(x-E)^2+\eta^2}}dF_{W}(x).
\end{eqnarray*}
By the formula for the MP law, one has
\begin{eqnarray}
\int_{\lambda_-}^{2E-\lambda_+}\frac{1}{\sqrt{(x-E)^2+\eta^2}}dF_{W}(x)&\leq& C\int_{\lambda_-}^{2E-\lambda_+}\frac{\sqrt{\lambda_+-x}}{E-x}dx \nonumber\\
&\leq& C\int_{\lambda_-}^{2E-\lambda_+}\frac{1}{\sqrt{E-x}}dx=O(1),
\label{4.8}
\end{eqnarray}
and
\begin{eqnarray*}
\int_{2E-\lambda_+}^{\lambda_+}\frac{1}{\sqrt{(x-E)^2+\eta^2}}dF_{W}(x)\leq \eta^{-1}\int_{2E-\lambda_+}^{\lambda_+}dF_{W}(x)=o(1).
\end{eqnarray*}
When $E\geq \lambda_+$, we still have (\ref{4.8}). Therefore, we have
\begin{eqnarray*}
|\mathcal{G}_{ij}|\leq p^{C\epsilon}
\end{eqnarray*}
with overwhelming probability. Thus we complete the proof.
\end{proof}
Theorem \ref{th.4.1} is proved.
\end{proof}

\section{Proofs of main theorems}
In this section, we provide the proofs of Theorem \ref{th.1.2} and Theorem \ref{th.1.4}.
\begin{proof} [Proof of Theorem \ref{th.1.2}]
The proof of Theorem \ref{th.1.2} is totally based on Theorem 1.5 of \cite{PY} and our Theorem \ref{th.4.1}. Let $W^{\mathbf{v}}$ and $W^{\mathbf{w}}$ be two independent sample correlation matrix satisfying $\mathbf{C}_1$. We claim that there is an $\varepsilon>0$ and $\delta>0$ such that for any real number $s$ (which may depend on $p$) one has
\begin{eqnarray}
\mathbb{P}^{\mathbf{v}}(p^{2/3}(\lambda_p-\lambda_+)\leq s-p^{-\varepsilon})-p^{-\delta}
\leq\mathbb{P}^{\mathbf{w}}(p^{2/3}(\lambda_p-\lambda_+)\leq s)\nonumber\\
\leq
\mathbb{P}^{\mathbf{v}}(p^{2/3}(\lambda_p-\lambda_+)\leq s+p^{-\varepsilon})+p^{-\delta}
\label{5.555}
\end{eqnarray}
for $p\geq p_0$ sufficiently large, where $p_0$ is independent of $s$.
The proof of (\ref{5.555}) is independent of the matrix model and totally based on Theorem 1.5 of \cite{PY} and our Theorem \ref{th.4.1}, we refer to the proof of Theorem 1.7 of \cite{PY} for details.

Now if we choose $W^{\mathbf{v}}$ to be the Bernoulli case, it is not difficult to get Theorem \ref{th.1.2} by combining (\ref{5.555}) and Theorem \ref{th.0.1}.
\end{proof}
\begin{proof}[Proof of Theorem \ref{th.1.4}]
Set the matrix
\begin{eqnarray*}
A=\left(
\begin{array}{cccccc}
\frac{1}{\sqrt{n}} &\frac{1}{\sqrt{n}} &\frac{1}{\sqrt{n}} &\cdots &\frac{1}{\sqrt{n}}\\
\frac{1}{\sqrt{2}} &-\frac{1}{\sqrt{2}} &0 &\cdots &0\\
\frac{1}{\sqrt{3\cdot2}} &\frac{1}{\sqrt{3\cdot 2}} &-\frac{2}{\sqrt{3\cdot2}} &\cdots &0\\
\cdots &\cdots &\cdots &\cdots &\cdots\\
\frac{1}{\sqrt{n(n-1)}} &\frac{1}{\sqrt{n(n-1)}} &\frac{1}{\sqrt{n(n-1)}} &\cdots &-\frac{n-1}{\sqrt{n(n-1)}}
\end{array}
\right).
\end{eqnarray*}
It is easy to see $A$ is an orthogonal matrix. Moreover, it is elementary that
\begin{eqnarray*}
A(x_{i1}-\bar{x}_i,\cdots, x_{in}-\bar{x}_i)^T=(0,z_{i1},\cdots,z_{i,n-1})^T,
\end{eqnarray*}
 where $z_{i1},\cdots, z_{in-1}$ is a sequence of i.i.d $N(0,1)$ variables. Further, if we denote the vector $\mathbf{z}_i=(z_{i1},\cdots, z_{in-1})^T$, we also have
\begin{eqnarray*}
||\mathbf{x}_i-\bar{x}_i||^2=\sum_{k=1}^{n-1}z_{ik}^2=||\mathbf{z}_i||^2.
\end{eqnarray*}
Thus one has
\begin{eqnarray*}
\mathcal{R}=RR^T=RA^TAR^T=:\mathcal{Z}.
\end{eqnarray*}
Here
\begin{eqnarray*}
\mathcal{Z}=ZZ^T
\end{eqnarray*}
with
\begin{eqnarray*}
Z=\left(
\begin{array}{cccc}
\frac{z_{11}}{||\mathbf{z}_{1}||} &\frac{z_{12}}{||\mathbf{z}_{1}||} &\cdots &\frac{z_{1,n-1}}{||\mathbf{z}_{1}||}\\
\vdots &\vdots &\vdots &\vdots\\
\frac{z_{p1}}{||\mathbf{z}_p||} &\frac{z_{p2}}{||\mathbf{z}_p||} &\cdots &\frac{z_{p,n-1}}{||\mathbf{z}_p||}
\end{array}
\right).
\end{eqnarray*}
Consequently, in the Gaussian case, $\mathcal{R}$ is also a $W$-type sample correlation matrix defined in (\ref{1.1}) with parameters $p,n-1$. Thus by Theorem \ref{th.1.2}, we have
\begin{eqnarray}
\frac{(n-1)\lambda_p(\mathcal{R})-(p^{1/2}+(n-1)^{1/2})^2}{((n-1)^{1/2}+p^{1/2})(p^{-1/2}+(n-1)^{-1/2})^{1/3}}\stackrel d\longrightarrow TW_1 \label{6.113}
\end{eqnarray}
and
\begin{eqnarray}
\frac{(n-1)\lambda_1(\mathcal{R})-(p^{1/2}-(n-1)^{1/2})^2}{((n-1)^{1/2}-p^{1/2})(p^{-1/2}-(n-1)^{-1/2})^{1/3}}\stackrel d\longrightarrow TW_1.\label{6.114}
\end{eqnarray}
as $p\rightarrow\infty$. Replacing $n-1$ by $n$ in (\ref{6.113}) and (\ref{6.114}), we can complete the proof of Theorem \ref{th.1.4}.
\end{proof}
\section{Appendix A}
In this appendix we prove Lemma \ref{le.rrr}
\begin{proof}[Proof of Lemma \ref{le.rrr}]
At first we prove $i)$. Note that $W=DSD$.
For $W$ and $SD^2$ share the same eigenvalues, it is equivalent to prove that the eigenvalues of $SD^2$ are simple. We further introduce the polynomial $P_1(X)$ of $\{x_{ij}, 1\leq i\leq p,1\leq j\leq n\}$ as
\begin{eqnarray*}
P_1(X)=\prod_{k=1}^p||\mathbf{x}_k||^2.
\end{eqnarray*}
It is easy to see $P_1(X)$ vanishes with zero Lebesgue measure, so we can always assume $P_1(X)\neq 0$. As a consequence, we can reduce our problem to prove the matrix
\begin{eqnarray*}
Q:=SD^2P_1(X)
\end{eqnarray*}
has no multiple eigenvalue. Now we denote the discriminant of the characteristic polynomial of $Q$ by $P_Q(X)$. Observe that all the entries of $Q$ are polynomials of $\{x_{ij}, 1\leq i\leq p,1\leq j\leq n\}$, so $P_Q(X)$ is also a polynomial of $\{x_{ij}, 1\leq i\leq p,1\leq j\leq n\}$. For the set of zeros of any non null polynomial in real variables only has zero Lebesgue measure, it suffices to prove that $P_Q(X)$ is not a null polynomial. In other words, it suffices to find a family $\{x_{ij}, 1\leq i\leq p,1\leq j\leq n\}$ such that $P_Q(X)\neq 0$. It is equivalent to show that $W$ has no multiple eigenvalue for one sample of the collection $\{x_{ij}, 1\leq i\leq p,1\leq j\leq n\}$ such that $P_1(X)\neq 0$.

Now we choose the sample as
\begin{eqnarray*}
x_{ij}=\left\{
\begin{array}{ccc}
1, &\mbox{$j=i$~or~$i+1$}\\
0, &\mbox{others}
\end{array}
\right.
\end{eqnarray*}
with $1\leq i\leq p,1\leq j\leq n$. Then it is not difficult to see
\begin{eqnarray*}
W=\left(
\begin{array}{ccccc}
1 &\frac12 &~ &~ &~\\
\frac12 &1 &\frac12 &~ &~\\
~ &\ddots &\ddots  &\ddots &~\\
~ &~ &\frac12 &1 &\frac12\\
~ &~ &~ &\frac12 &1
\end{array}
\right),
\end{eqnarray*}
which is a Jacobi matrix with positive subdiagonal entries. Such a Jacobi matrix has simple eigenvalues, for example, see Proposition 2.40 of \cite{Deift}.

Next we turn to the proof of $ii)$. We use $X^{(p)}$ to denote the submatrix of $X$ with $p$-th row deleted, and use $D^{(p)}$ to denote the $p-1\times p-1$ upper left corner of $D$. And we set $S^{(p)}=X^{(p)}X^{(p)T}$, thus one has $W^{(p)}=D^{(p)}S^{(p)}D^{(p)}$. Similar to the proof of $i)$, we can prove that $SD^2P_1(X)$ and $S^{(p)}(D^{(p)})^2P_1(X)$ have no eigenvalue in common instead. It is easy to see the resultant of the characteristic polynomials of $SD^2P_1(X)$ and $S^{(p)}(D^{(p)})^2P_1(X)$ is a polynomial of $\{x_{ij}, 1\leq i\leq p,1\leq j\leq n\}$. Therefore, it suffices to show the resultant is a non null polynomial. Equivalently, we shall provide a sample of $\{x_{ij}, 1\leq i\leq p,1\leq j\leq n\}$ such that $W$ and $W^{(p)}$ have no eigenvalue in common.

Using $i)$ to $W^{(p)}$ we can denote the ordered eigenvalues of $W^{(p)}$ by $\lambda_1^{(p)}<\lambda_2^{(p)}<\cdots<\lambda_{p-1}^{(p)}$. By Cauchy's interlacing property, one has
\begin{eqnarray}
0\leq\lambda_1\leq \lambda_1^{(p)}\leq\lambda_2\leq\cdots\leq\lambda_{p-1}^{(n)}\leq\lambda_p. \label{3.332}
\end{eqnarray}
Moreover, we know that $\mathcal{W}^{(p)}$ shares the same nonzero eigenvalues with $W^{(p)}$. So we can provide an example such that $\mathcal{W}$ and $\mathcal{W}^{(p)}$ have no nonzero eigenvalue in common instead. Note
\begin{eqnarray}
\mathcal{W}=\mathcal{W}^{(p)}+\mathbf{y}_p\mathbf{y}_p^T.\label{3.333}
\end{eqnarray}
Taking trace on both side of (\ref{3.333}), we obtain
\begin{eqnarray}
\lambda_1+\cdots+\lambda_p=\lambda_1^{(n)}+\cdots+\lambda_{p-1}^{(n)}+1.\label{3.334}
\end{eqnarray}
Now if we fix $\{x_{ij},1\leq i\leq p-1,1\leq j\leq n\}$ such that $\lambda_1^{(p)}<\lambda_2^{(p)}<\cdots<\lambda_{p-1}^{(p)}$ and let $\{x_{pj},1\leq j\leq n\}$ vary. When $\{x_{pj},1\leq j\leq n\}$ runs through the set $\mathbb{R}^{n}$, the ordered nonzero eigenvalues of $\mathcal{W}$ describe the set of families $\lambda_1,\cdots,\lambda_p$ of real numbers obeying (\ref{3.332}) and (\ref{3.334}), see the proof of Lemma 11.4 of \cite{BGM} for example. Thus it is easy to find a family $\lambda_1,\cdots,\lambda_p$ such that
\begin{eqnarray*}
\{\lambda_1,\cdots,\lambda_p\}\cap\{\lambda_1^{(p)},\lambda_2^{(p)},\lambda_{p-1}^{(p)}\}=\emptyset
\end{eqnarray*}
\\

Now we prove $iii)$. We set $X_{(n)}$ to be the submatrix of $X$ with the $n$-th column deleted
and set
\begin{eqnarray*}
\widehat{D}_{(n)}=\left(
\begin{array}{cccc}
\frac{\sqrt{n}}{||\widehat{\mathbf{x}}_1||} &~ &~\\
~ &\ddots &~\\
~ &~ &\frac{\sqrt{n}}{||\widehat{\mathbf{x}}_p||}
\end{array}
\right).
\end{eqnarray*}
Let $S_{(n)}=X_{(n)}X_{(n)}^T$. It is obvious that $S_{(n)}\widehat{D}_{(n)}^2$ shares the same eigenvalues with $\widehat{W}_{(n)}$
Now we introduce the polynomials
\begin{eqnarray*}
P_2(X)=\prod_{k=1}^p||\mathbf{x}_k||^2\cdot||\widehat{\mathbf{x}}_k||^2.
\end{eqnarray*}
To prove that $W$ and $\widehat{W}_{(n)}$ have no eigenvalue in common, we only need to show $SD^2$ and $S_{(n)}\widehat{D}_{(n)}^2$ have no eigenvalue in common. Moreover, if $P_2(X)$ does not vanish, it is equivalent to prove that the matrices $T:=SD^2P_2(X)$ and $\widehat{T}_{(n)}:=S_{(n)}\widehat{D}_{(n)}^2P_2(X)$ have no eigenvalue in common. Note that the event $P_2(X)=0$ has zero Lebesgue measure. What's more, it is not difficult to see the entries of $T$ and $\widehat{T}_{(n)}$ are all polynomials of the elements of $X$, thus the resultant $R(X)$ of the characteristic polynomials of $T$ and $T_{(n)}$ is also a polynomial of the elements of $X$.  Therefore, we only need to show $R(X)$ is a non null polynomial, it suffices to give only one example of $X$ such that $W$ and $\widehat{W}_{(n)}$ do not have eigenvalue in common. For example, we can choose
\begin{eqnarray*}
x_{ij}=
\left\{
\begin{array}{lll}
1,~~~~\mbox{$j=i$ or $j=n$},\\
0, ~~~~\mbox{others}
\end{array}
\right.
\end{eqnarray*}
Then we have $\widehat{W}_{(n)}=I_p$ and
\begin{eqnarray*}
W=\left(
\begin{array}{ccccc}
1 &\frac12  &\cdots &\frac12\\
\frac12 &1  &\cdots &\frac12\\
\vdots &\vdots &\ddots &\vdots\\
\frac12 &\frac12  &\cdots &1
\end{array}
\right).
\end{eqnarray*}
Thus it is easy to see $\widehat{W}_{(n)}$ and $W$ have no eigenvalue in common for $\det(W-I)\neq 0$, which implies that $R(X)$ is not a null polynomial, so we conclude the proof.
\end{proof}
\section{Appendix B}
In this appendix, we prove Lemma \ref{le.3.5}.
 If we denote
 \begin{eqnarray*}
 \hat{h}_n=(\frac{x_{1n}}{||\widehat{\mathbf{x}}_1||},\frac{x_{2n}}{||\widehat{\mathbf{x}}_2||},\cdots,\frac{x_{pn}}{||\widehat{\mathbf{x}}_p||})^T.
 \end{eqnarray*}
Set
\begin{eqnarray*}
c_i=\frac{1}{||\mathbf{x}_i||\cdot||\widehat{\mathbf{x}}_i||\cdot(||\mathbf{x}_i||+||\widehat{\mathbf{x}}_i||)}.
\end{eqnarray*}
By the Hoeffding inequality, we have
\begin{eqnarray}
c_i=\frac{1}{n^{3/2}}+\frac{K^{O(1)}\log^{O(1)} n}{n^2}.\label{6.001}
\end{eqnarray}
holds with overwhelming probability.
It is not difficult to see
 \begin{eqnarray}
\hat{v}_j\cdot h_n=\hat{v}_{j}\cdot\hat{h}_n- \hat{v}_j\cdot (c_1x_{1n}^3,\cdots,c_px_{pn}^3)^T:=\hat{v}_{j}\cdot\hat{h}_n+d_j.\label{3.47}
 \end{eqnarray}
 By (\ref{6.001}), we can write $d_j$
 \begin{eqnarray}
 d_{j}:=\frac{1}{n^{3/2}}\hat{v}_j\cdot (x_{1n}^3,\cdots,x_{pn}^3)^T+f_j. \label{6.003}
 \end{eqnarray}
  Observe that
  \begin{eqnarray*}
  \sum_{j\in J}(\hat{v}_{j}\cdot h_n)^{2}=\sum_{j\in J}(\hat{v}_{j}\cdot \hat{h}_n)^{2}+2\sum_{j\in J}d_j(\hat{v}_{j}\cdot \hat{h}_n)+\sum_{j\in J}d_j^2.
  \end{eqnarray*}
  Since $(x_{1n}^3,\cdots,x_{pn}^3)^T$ is also a random vector with mean zero and finite variance entries, Lemma \ref{le.2.5} can be used to the first part of the right hand side of (\ref{6.003}). Thus if we set the projection
  \begin{eqnarray}P_J=\sum_{j\in J}\hat{v}_j\hat{v}_j^T, \label{6.000}
  \end{eqnarray}
 then we have
  \begin{eqnarray*}
  \sum_{j\in J}d_j^2\leq C\frac{1}{n^3}|P_J \cdot (x_{1n}^3,\cdots,x_{pn}^3)^T|^2+C\sum_{j\in J}f_j^2=O(\frac{|J|}{n^3})+O(\frac{K^{O(1)}\log^{O(1)}n}{n^3}).
  \end{eqnarray*}
  with overwhelming probability. Here we have used the fact that for any $J$
  \begin{eqnarray*}
  \sum_{j\in J}f_j^2\leq C\frac{K^{O(1)}\log^{O(1)}n}{n^4}\sum_{i=1}^px_{in}^6=O(\frac{K^{O(1)}\log^{O(1)}n}{n^3})
  \end{eqnarray*}
  with overwhelming probability.
   Since
  \begin{eqnarray*}
  \sum_{j\in J}d_j(\hat{v}_{j}\cdot \hat{h}_n)\leq\bigg{(}\sum_{j\in J}d_j^{2}\bigg{)}^{1/2}\bigg{(}\sum_{j\in J}(\hat{v}_{j}\cdot \hat{h}_n)^2\bigg{)}^{1/2},
  \end{eqnarray*}
  it suffices to prove the following lemma instead.
 \begin{lemma} \label{le.5.1} Using the notation in Lemma \ref{le.3.5}, we have for any $J\in \{1,\cdots,p\}$ with $|J|=d\leq nK^{-3}$,
\begin{eqnarray*}
\sqrt{n}[\sum_{j\in J}(\hat{v}_{j}\cdot \hat{h}_n)^{2}]^{1/2}=\sqrt{d}+O(K\log n)
\end{eqnarray*}
with overwhelming probability.
\end{lemma}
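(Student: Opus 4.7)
The plan exploits the crucial fact that the modified matrix $\widehat{Y}_{(n)}$ --- and hence the eigenvectors $\hat{v}_j$ and the normalizations $\|\widehat{\mathbf{x}}_i\|$ --- depends only on $\{x_{ij}: 1 \leq i \leq p,\ 1 \leq j \leq n-1\}$, and is therefore independent of the random vector $\mathbf{x}_n := (x_{1n},\ldots,x_{pn})^T$. Write $\hat{h}_n = D_n \mathbf{x}_n$ with $D_n := \mathrm{diag}(1/\|\widehat{\mathbf{x}}_i\|)_{i=1}^p$, and let $\pi_H := \sum_{j \in J} \hat{v}_j \hat{v}_j^T$ denote the orthogonal projection onto the $d$-dimensional subspace $H := \mathrm{span}\{\hat{v}_j : j \in J\}$, so that $\sum_{j \in J}(\hat{v}_j \cdot \hat{h}_n)^2 = \|\pi_H \hat{h}_n\|^2$.

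The first step is to apply Lemma \ref{le.2.5} directly to $\mathbf{x}_n$, conditionally on $\widehat{Y}_{(n)}$. Since $H$ is then a deterministic subspace of dimension $d$ and the entries of $\mathbf{x}_n$ are independent mean-zero, unit-variance random variables bounded by $K$, the lemma yields
\[
\|\pi_H \mathbf{x}_n\| = \sqrt{d} + O(K \log n)
\]
with overwhelming probability. The second step is to control the diagonal scaling: a standard concentration argument (for instance, Lemma \ref{le.2.5} applied separately to each $\widehat{\mathbf{x}}_i$ with $H$ the full space, together with a union bound over $1 \leq i \leq p$) gives $\|\widehat{\mathbf{x}}_i\| = \sqrt{n} + O(K \log n)$ uniformly in $i$, so that $\sqrt{n}\, D_n = I + E$ with $\|E\|_{op} = O(K \log n / \sqrt{n})$, again with overwhelming probability.

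Combining the two estimates via $\sqrt{n}\, \pi_H \hat{h}_n = \pi_H \mathbf{x}_n + \pi_H E \mathbf{x}_n$, and using the crude bound $\|\mathbf{x}_n\| = \sqrt{p} + O(K\log n)$ (yet another application of Lemma \ref{le.2.5}), we find $\|\pi_H E \mathbf{x}_n\| \leq \|E\|_{op}\cdot\|\mathbf{x}_n\| = O(K\log n/\sqrt{n}) \cdot O(\sqrt{p}) = O(K \log n)$, since $p/n \to y \in (0,1)$. The triangle inequality then delivers
\[
\sqrt{n}\Bigl[\sum_{j \in J}(\hat{v}_j \cdot \hat{h}_n)^2\Bigr]^{1/2} = \|\pi_H \mathbf{x}_n\| + O(K \log n) = \sqrt{d} + O(K \log n),
\]
as required. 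There is no serious obstacle here; the argument is essentially a direct application of Lemma \ref{le.2.5}, and the matrix $\widehat{Y}_{(n)}$ was introduced precisely so that the independence needed for this conditional reduction would be available --- this is in marked contrast to $Y_{(n)}$, where the dependence between the last column and the remaining submatrix would otherwise obstruct the concentration step.
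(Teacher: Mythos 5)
Your argument is correct, and it is genuinely different from (and shorter than) the paper's. Both proofs start from the same observation that $\sqrt{n}\,\hat{h}_n=\hat{D}\tilde{h}_n$ with $\hat{D}=\mathrm{diag}(\sqrt{n}/\|\widehat{\mathbf{x}}_i\|)$ depending only on $\widehat{Y}_{(n)}$ and hence independent of $\tilde h_n=(x_{1n},\dots,x_{pn})^T$; this independence is exactly why $\widehat Y_{(n)}$ was introduced. From there the paper does \emph{not} invoke Lemma \ref{le.2.5} as a black box: it re-runs the concentration machinery from scratch, showing that $F(\tilde h_n)=\|\pi_H\hat D\tilde h_n\|$ is convex and (with overwhelming probability) $2$-Lipschitz in $\tilde h_n$, applying Talagrand's inequality to concentrate $F$ around its median, and then locating the median within $2K$ of $\sqrt d$ by Chebyshev-type second-moment estimates on the quadratic form $\tilde h_n^T\hat D P_J\hat D\tilde h_n$; the hypothesis $d\le nK^{-3}$ enters there to make $\mathrm{Tr}(\hat DP_J\hat D)=d+o(\sqrt dK)$. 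You instead treat $\hat D=I+E$ as a perturbation of the identity, apply Lemma \ref{le.2.5} once (conditionally) to get $\|\pi_H\tilde h_n\|=\sqrt d+O(K\log n)$, and absorb the error via $\|\pi_H E\tilde h_n\|\le\|E\|_{op}\|\tilde h_n\|=O(K\log n/\sqrt n)\cdot O(\sqrt p)=O(K\log n)$. This is cleaner and avoids both the Lipschitz verification and the median computation; note that your route does not even use the restriction $d\le nK^{-3}$, since the perturbative error $O(K\log n)$ is uniform in $d$. The only points to state carefully are (i) that the conditional application of Lemma \ref{le.2.5} is legitimate because $H$ and $E$ are measurable with respect to $\widehat Y_{(n)}$ and the conditional tail bound is uniform in the conditioning, and (ii) the union bound over $i$ giving $\|E\|_{op}=O(K\log n/\sqrt n)$ with overwhelming probability, both of which you have.
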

 \begin{proof}
Observe that
 \begin{eqnarray*}
 \hat{v}_{j}\cdot\hat{h}_n=(\frac{\hat{v}_{j1}}{||\widehat{\mathbf{x}}_1||},\frac{\hat{v}_{j2}}{||\widehat{\mathbf{x}}_2||},
 \cdots,\frac{\hat{v}_{jp}}{||\widehat{\mathbf{x}}_p||})^T\cdot(x_{1n},\cdots,x_{pn})^T .
 \end{eqnarray*}
 Now we set
 \begin{eqnarray*}
 \tilde{v}_j=\sqrt{n}(\frac{\hat{v}_{j1}}{||\widehat{\mathbf{x}}_1||},\frac{\hat{v}_{j2}}{||\widehat{\mathbf{x}}_2||},
 \cdots,\frac{\hat{v}_{jp}}{||\widehat{\mathbf{x}}_p||})^T
 \end{eqnarray*}
 and
 \begin{eqnarray*}
 \tilde{h}_n=(x_{1n},\cdots,x_{pn})^T.
 \end{eqnarray*}
 It follows that
 \begin{eqnarray*}
 \hat{v}_{j}\cdot\hat{h}_n=\frac{1}{\sqrt{n}}\tilde{v}_j\cdot\tilde{h}_n.
 \end{eqnarray*}
We use the following concentration theorem, which is a consequence of Talagrand's inequality, (see Theorem $69$ of \cite{TV1}).
\begin{theorem} \label{th.5.1}(\emph{Talagrand's inequality}). Let $\mathbf{D}$ be the disk $\{z\in\mathbb{C}, |z|\leq K\}$. For every product probability $\mu$ on $\mathbf{D}^p$, every convex 1-Lipschitz function $F:\mathbb{C}^p\rightarrow \mathbb{R}$, and every $r\geq 0$,
\begin{eqnarray*}
\mu(|F-M(F)|\geq r)\leq 4\exp(-r^2/16K^2),
\end{eqnarray*}
where $M(F)$ denotes the median of $F$.
\end{theorem}
\begin{remark}
In fact, here we only need the real case of the theorem.
\end{remark}
It is easy to see
\begin{eqnarray*}
\sqrt{n}[\sum_{j\in J}(\hat{v}_j\cdot\hat{h}_n)^2]^{1/2}=[\sum_{j\in J}(\tilde{v}_j\cdot \tilde{h}_n)^2]^{1/2}=:F(\tilde{h}_n)
\end{eqnarray*}
is a convex function of the vector $\tilde{h}_n$. Note
\begin{eqnarray*}
\frac{|F(\tilde{h}'_n)-F(\tilde{h}_n)|}{||\tilde{h}'_n-\tilde{h}_n||}
=\frac{|F(\tilde{h}'_n)-F(\tilde{h}_n)|}{||\sqrt{n}\hat{h}'_n-\sqrt{n}\hat{h}_n||}\cdot\frac{||\sqrt{n}\hat{h}'_n-\sqrt{n}\hat{h}_n||}{||\tilde{h}'_n-\tilde{h}_n||},
\end{eqnarray*}
where
\begin{eqnarray*}
\hat{h}'_n=(\frac{x'_{1n}}{||\widehat{\mathbf{x}}_1||},\cdots,\frac{x'_{pn}}{||\widehat{\mathbf{x}}_p||})^T, \quad \tilde{h}'_n=(x'_{1n},\cdots,x'_{pn})^T.
\end{eqnarray*}
Since $F(\tilde{h}_n)$ is the norm of a projection of the vector $\sqrt{n}\hat{h}_n$,  it is always 1-Lipschitz with respect to $\sqrt{n}\hat{h}_n$. And by the Hoeffding inequality, we also have
\begin{eqnarray*}
\frac{||\sqrt{n}\hat{h}'_n-\sqrt{n}\hat{h}_n||}{||\tilde{h}'_n-\tilde{h}_n||}\leq 2
\end{eqnarray*}
with overwhelming probability. So $F(\tilde{h}_n)$ is a 2-Lipschitz function with overwhelming probability. Thus we can always consider $F(\tilde{h}_n)$ as a 2-Lipschitz function below. By Theorem \ref{th.5.1}, we have
\begin{eqnarray*}
\mathbb{P}(|F(\tilde{h}_n)-M(F(\tilde{h}_n))|\geq r)\leq 4\exp(-r^2/64K^2).
\end{eqnarray*}
So to conclude the proof of Lemma \ref{le.5.1}, we only need to show that
\begin{eqnarray*}
|M(F(\tilde{h}_n))-\sqrt{d}|\leq 2K.
\end{eqnarray*}
Note that
\begin{eqnarray*}
\sqrt{n}\hat{h}_n=\left(
\begin{array}{ccc}
\frac{\sqrt{n}}{||\widehat{\mathbf{x}}_1||} &~ &~\\
~ &\ddots &~\\
~ &~ &\frac{\sqrt{n}}{||\widehat{\mathbf{x}}_p||}
\end{array}
\right)
\tilde{h}_n:=\hat{D}\tilde{h}_n
\end{eqnarray*}
So we have
$
F^2(\tilde{h}_n)=n\sum_{j\in J}|\hat{v}_j\cdot \hat{h}_n|^2=\sum_{j\in J}\tilde{h}^T_n \hat{D}\hat{v}_j\hat{v}_j^T\hat{D}\tilde{h}_n:=\tilde{h}_n^T \hat{D}P_J\hat{D}\tilde{h}_n,
$
where $P_J$ is the projection defined in (\ref{6.000}).
Let $\hat{D}P_J\hat{D}=:(m_{kl})_{1\leq k,l\leq p}$, then we have
\begin{eqnarray*}
F^2(\tilde{h}_n)=\sum_{1\leq k,l\leq p} m_{kl}x_{kn}x_{ln}=\sum_{k=1}^p m_{kk}x_{kn}^2+\sum_{1\leq k\neq l\leq p}m_{kl}x_{kn}x_{ln}.
\end{eqnarray*}
We fix all the variables except $x_{1n},\cdots, x_{pn}$, so the probabilities and expectations are all taken with respect to $\tilde{h}_n$ below. Consider the event $\mathcal{E}_{+}$ that $F(\tilde{h}_n)\geq \sqrt{d}+2K$, which implies $F^2(\tilde{h}_n)\geq d+4\sqrt{d}K+K^2$. It follows that
\begin{eqnarray*}
\mathbb{P}(\mathcal{E}_{+})\leq \mathbb{P}(\sum_{k=1}^pm_{kk}x_{kn}^2\geq d+2\sqrt{d}K)+\mathbb{P}(|\sum_{1\leq k\neq l\leq p}m_{kl}x_{kn}x_{ln}|\geq 2\sqrt{d}K).
\end{eqnarray*}
Observe that
\begin{eqnarray*}
\mathbb{E}\{\sum_{k=1}^pm_{kk}x_{kn}^2\}=\sum_{k=1}^{p}m_{kk}=d(1+O(\frac{K^{2+\epsilon}}{\sqrt{n}}))
\end{eqnarray*}
holds with overwhelming probability for any small $\epsilon>0$. Here we have used the fact that
\begin{eqnarray*}
\lambda_{\min}(\hat{D})TrP_J\leq Tr\hat{D}P_J\hat{D}\leq\lambda_{\max}(\hat{D})TrP_J
\end{eqnarray*}
and $TrP_J=d$. By the condition that $d\leq nK^{-3}$, we have
\begin{eqnarray*}
\mathbb{E}\{\sum_{k=1}^pm_{kk}x_{kn}^2\}=d+o(\sqrt{d}K).
\end{eqnarray*}
Let $S_1:=\sum_{k=1}^pm_{kk}(x_{kn}^2-1)$. We have
\begin{eqnarray*}
\mathbb{P}(\sum_{k=1}^pm_{kk}x_{kn}^2\geq d+2\sqrt{d}K)\leq\mathbb{P}(|S_1|\geq \sqrt{d}K)\leq \frac{\mathbb{E}(|S_1|^2)}{dK^2}.
\end{eqnarray*}
And by the assumption on $K$ we also have
\begin{eqnarray*}
\mathbb{E}|S_1|^2=\sum_{k=1}^pm_{kk}^2\mathbb{E}(x_{kn}^2-1)^2=\sum_{k=1}^pm_{kk}^2(\mathbb{E}x_{kn}^4-1)\leq dK.
\end{eqnarray*}
Thus,
\begin{eqnarray*}
\mathbb{P}(|S_1|\geq \sqrt{d}K)\leq \frac{\mathbb{E}(|S_1|^2)}{dK^2}\leq \frac{1}{K}\leq 1/10.
\end{eqnarray*}
 Set $S_2:=|\sum_{k\neq l}m_{kl}x_{kn}x_{ln}|$. Then we have
 \begin{eqnarray*}
 \mathbb{E}S^2_2=2\sum_{k\neq l}m_{kl}^2\leq 2Tr \hat{D}P_J\hat{D}^2P_J\hat{D}\leq ||\hat{D}||_{op}^4TrP_J=2d(1+O(\frac{K^{2+\epsilon}}{\sqrt{n}})).
 \end{eqnarray*}
 By Chebyshev's inequality one has
$$
 \mathbb{P}(S_2\geq 2\sqrt{d}K)\leq 1/10.
$$
 Similarly, we can define $\mathcal{E}_{-}$ as the event $F(\tilde{h}_n)\leq \sqrt{d}-2K$ and use
$$
 \mathbb{P}(\mathcal{E}_{-})\leq \mathbb{P}(S_1\leq d-\sqrt{d}K)+\mathbb{P}(S_2\geq\sqrt{d}K).
$$
 Both terms on the right hand side can be bounded by $1/5$ by the same argument as above. So we conclude the proof.
\end{proof}

\end{document}